\newtheorem{thm}{Theorem}[section]
\newtheorem{prop}[thm]{Proposition}
\newtheorem{lem}[thm]{Lemma}
\newtheorem{rem}[thm]{Remark}
\def\N{\mathbb{N}}
\def\N{\mathbb{N}}
\numberwithin{equation}{section}
\title{Irregular set and   metric mean dimension with potential}
\author{Tianlong Zhang$^1$, Ercai Chen$^1$ and Xiaoyao Zhou$^1$*}
\address
{1.School of Mathematical Sciences and Institute of Mathematics, Ministry of Education Key Laboratory of NSLSCS, Nanjing Normal University, Nanjing, 210023, Jiangsu, P.R.China}
\email{ztl20001007@163.com}
\email{ecchen@njnu.edu.cn}
\email{zhouxiaoyaodeyouxian@126.com}
\date{}
\begin{document}

\maketitle

\renewcommand{\thefootnote}{}
\footnote{2020 \emph{Mathematics Subject Classification}:    37A15, 37C45.}
\footnotetext{\emph{Key words and phrases}: Metric mean dimension with potential; irregular set; Specification property; multifractal analysis; Variational principle. }
\footnote{*corresponding author.}

\begin{abstract}
	Let $(X,f)$ be a dynamical system with the specification property and $\varphi$ be a continuous function. In this paper, we consider the multifractal irregular set
	\begin{align*}
		I_{\varphi}=\left\{x\in X:\lim\limits_{n\to\infty}\frac{1}{n}\sum_{i=0}^{n-1}\varphi(f^ix)\ \text{does not exist}\right\}
	\end{align*}  and show that this set is either empty  or carries full Bowen upper and lower metric mean dimension with potential.

\end{abstract}

\section{Introduction}  
This paper contributes to the study of  the information  the upper and lower metric mean dimension with potential of  multifractal irregular set carry.

In this paper, we focus	on the following framework.
Let $(X,d,f)$ be a topological dynamical system(abbr, $\rm{TDS}$), i.e. a compact space $(X,d)$ and a continuous transformation $f:X\to X$. For any continuous function $\varphi:X\to\mathbb{R}$, the space $X$ has a natural multifractal decomposition
$$X=\bigcup_{\alpha\in\mathbb{R}}K_{\alpha}\cup I_{\varphi}$$
where 
\begin{align*}
K_{\alpha}&=\left\{x\in X:\lim\limits_{n\to\infty}\frac{1}{n}\sum_{i=0}^{n-1}\varphi(f^ix)=\alpha\right\},\\
I_{\varphi}&=\left\{x\in X:\lim\limits_{n\to\infty}\frac{1}{n}\sum_{i=0}^{n-1}\varphi(f^ix)\ \text{does not exist}\right\}.
\end{align*}
In this paper, we take $I_{\varphi}$ as our  research object. $I_{\varphi}$ was called irregular set, the set of divergence points or historic set by  different researchers. In the early stages, $I_{\varphi}$ has been considered of little interest in dynamical systems and geometric measure theory. However,  the works of \cite{cx99,ff00,ffw01,flw02} have changed such attitudes. For a general dynamical system with the specification property, Chen, K\"upper and Shu \cite{cks05} proved that  the irregular set is either empty or carries full topological entropy. In 2010, Thompson  \cite{th10} extended it to  that the irregular set is either empty or carries full topological pressure for maps with the specification property.
Meanwhile, the irregular set $I_{\varphi}$ is either empty or carrying full Bowen topological in systems with the almost specification property \cite{th12} and in systems with shadowing property \cite{dot18}.
Recently, using (metric) mean dimension to describe the complexity of infinite entropy systems has attracted people's research interest. In 2021, Lima and Varandas \cite{lv21} proved that for a continuous map with the gluing orbit property on a compact metric space $I_{\varphi}$ is either an empty set or carries full topological pressure and full metric mean dimension.
Liu and Liu  \cite{ll24} gave a contribution to the study  of the metric mean dimension by proving that $I_{\varphi}$ of  a system with the weak specification property is empty or its Bowen (upper and lower) metric mean dimension coincides with the classical  (upper and lower) metric mean dimension of $(X,d,f)$.  
For a dynamical system with the shadowing property, Fory\'s-Krawiec and Oprocha give a similar result \cite{fo24}.
Recently, Tsukamoto introduced the notion of upper mean dimension with potential \cite{tsu20}. Cheng, Li and Selmi \cite{cls21} introduced the Bowen upper metric mean dimension with potential. This prompts us to use these quantities to measure the complexity of $I_{\varphi}$. 

Now, we state our main result as follows:
\begin{thm}\label{thm 1.1}
Let $(X,d,f)$ be a $\mathrm{TDS}$ satisfying the specification property, $I_{\varphi}$ be the irreugular set of $\varphi$ and $\psi\in C(X,\mathbb{R})$. If $\overline{\rm{mdim}}_M(f,X,d,\psi)<\infty$(resp. $\underline{\rm{mdim}}_M(X,f,\psi)<\infty$), then either $I_{\varphi}=\emptyset$ or
\begin{align*}
&\overline{\rm{mdim}}_M^B(f,I_{\varphi},d,\psi)=\overline{\rm{mdim}}_M(f,X,d,\psi),\\
&\underline{\rm{mdim}}_M^B(f,I_{\varphi},d,\psi)=\underline{\rm{mdim}}_M(f,X,d,\psi).
\end{align*}
\end{thm}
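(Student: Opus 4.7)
The plan is to adapt the strategy of Chen, K\"upper and Shu \cite{cks05} and Thompson \cite{th10} for topological pressure, now carried out at every scale $\epsilon > 0$ so that the resulting estimates yield a lower bound on Bowen metric mean dimension with potential after dividing by $\log(1/\epsilon)$ and letting $\epsilon \to 0$. The hypothesis $I_\varphi \neq \emptyset$ is equivalent (via the empirical-measure argument) to the existence of two invariant measures with distinct $\varphi$-integrals, so by ergodic decomposition I fix ergodic $\mu_1, \mu_2$ with $\int\varphi\,d\mu_1 < \int\varphi\,d\mu_2$.

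First, I would invoke a scale-by-scale variational principle for the metric mean dimension with potential: for each $\epsilon > 0$ and $\eta > 0$ there is an ergodic measure $\nu_\epsilon$ whose ``$\epsilon$-measure-theoretic pressure'' $h_{\nu_\epsilon}(f,\epsilon)+\int\psi\,d\nu_\epsilon$ lies within $\eta\log(1/\epsilon)$ of the topological $\epsilon$-pressure of $\psi$. Using the density of ergodic measures guaranteed by the specification property, I would then replace $\nu_\epsilon$ by two ergodic perturbations $\omega_1^\epsilon,\omega_2^\epsilon$ (obtained by taking small convex combinations with $\mu_1$ and $\mu_2$ respectively and then returning to the ergodic case by approximation) whose $\epsilon$-pressure is still close to maximal while $\int\varphi\,d\omega_1^\epsilon\neq\int\varphi\,d\omega_2^\epsilon$.

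Next, I would use the specification property to concatenate orbit segments that are $(n,\delta)$-generic for $\omega_1^\epsilon$ and $\omega_2^\epsilon$ in alternation, with block lengths growing fast enough that the Birkhoff averages of $\varphi$ oscillate between values near $\int\varphi\,d\omega_1^\epsilon$ and $\int\varphi\,d\omega_2^\epsilon$ along the constructed orbit. A standard Moran-type construction then yields a compact set $F\subseteq I_\varphi$ carrying a natural probability measure $\mu^*$. An entropy-distribution principle applied to $\mu^*$ against the $\psi$-weighted Bowen covers of $F$ at scale $\epsilon$ produces
\[
\overline{\mdim}_M^B(f,I_\varphi,d,\psi)\;\ge\;\overline{\mdim}_M(f,X,d,\psi)-O(\eta),
\]
and letting $\eta\downarrow 0$, combined with the monotonicity bound $\overline{\mdim}_M^B(f,I_\varphi,d,\psi)\le\overline{\mdim}_M(f,X,d,\psi)$, gives equality. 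The lower metric mean dimension case is handled by running the same argument along a subsequence $\epsilon_k\downarrow 0$ that realises the liminf defining $\underline{\mdim}_M(f,X,d,\psi)$.

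The principal technical obstacle is controlling the error terms uniformly in $\epsilon$. Thompson's pressure argument is executed at one fixed Bowen-scale, but here the scale varies and one divides by $\log(1/\epsilon)$ at the end. Specifically, the ``gap lengths'' introduced by the specification property, the weak-$*$ approximation error $\delta$ between the empirical measures of the concatenated blocks and $\omega_i^\epsilon$, and the pressure slack $\eta$ must each contribute only $o(\log(1/\epsilon))$ to the final Bowen $\psi$-pressure count. The delicate piece is tuning the growth rates of successive block lengths together with the approximation parameters $\delta=\delta(\epsilon),\,\eta=\eta(\epsilon)$ so that the fraction of ``useful'' Birkhoff time along each generic point of $F$ tends to one fast enough that both the oscillation of $\varphi$-averages (to place $F$ inside $I_\varphi$) and the near-maximality of the $\epsilon$-pressure are preserved in the limit.
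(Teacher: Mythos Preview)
Your overall architecture matches the paper's: reduce to the lower bound, pick (for each error level $\gamma$) a scale $\epsilon_0$ where the variational principle of \cite{cl23} is nearly saturated, obtain two invariant measures with distinct $\varphi$-integrals and near-maximal scale-$\epsilon_0$ pressure, run a Thompson-style Moran construction via specification to produce $F\subset I_\varphi$, and finish with a pressure-distribution principle. So strategically you are on the right track.

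There is, however, a genuine gap in your approximation step. You propose to pass from the convex combination $t\nu_\epsilon+(1-t)\mu_i$ back to an \emph{ergodic} $\omega_i^\epsilon$ ``by approximation'', relying on density of ergodic measures under specification, while keeping the $\epsilon$-pressure close to maximal. Weak-$*$ density alone does not control entropy, and even the entropy-density theorems available under specification concern the Kolmogorov--Sinai entropy $h_\mu(f)$, not the scale-dependent quantity $\inf_{\mathrm{diam}\,\xi<\epsilon}h_\mu(f,\xi)$ that appears in the variational principle here. There is no reason an ergodic weak-$*$ approximant should have large scale-$\epsilon$ entropy. The paper sidesteps this entirely: it does \emph{not} insist that the second measure be ergodic. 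Instead it proves a tailored approximation lemma (Lemma~\ref{lem 3.3}) which, using the ergodic-decomposition formula of \cite{hmry04} for $\inf_{\xi\succ\mathcal U}h_\mu(f,\xi)$, replaces the non-ergodic $\mu_2$ by a \emph{finite} convex combination $\sum\lambda_i\nu_i$ of ergodic measures with the same scale-$\epsilon$ pressure up to $\delta$. The Moran construction then concatenates, for each block, orbit segments generic for $\nu_1,\dots,\nu_k$ with lengths $[\lambda_i\hat n]$, and Lemma~\ref{lem 3.5} recombines these to recover the pressure of $\mu_2$. This is the key technical device you are missing.

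A smaller point: your worry about controlling errors ``uniformly in $\epsilon$'' is unnecessary. The paper does not work at all scales simultaneously; for each $\gamma>0$ it fixes a single $\epsilon_0=\epsilon_0(\gamma)$ satisfying \eqref{3.1}--\eqref{3.3}, carries out the entire construction at that scale, and only at the very end lets $\gamma\to0$ (which forces $\epsilon_0\to0$). The gap lengths $m_k$, block lengths $n_k$, and repetition counts $N_k$ are all chosen after $\epsilon_0$ is fixed, so no uniformity is required.
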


\section{Preliminary} 
In this section, we give some auxiliary quantities and introduce some lemmas that we will use in the proof of Theorem \ref{thm 1.1}.

Let $(X,d, f)$ be a topological dynamical system (abbr. $\mathrm{TDS}$), i.e., a compact metric space $(X,d)$ with a continuous transformation $f:X\to X$.
Given $n\in \mathbb N$, $x,y \in X$, the $n$-th Bowen metric $d_n$  on $X$ is defined by  $$d_n(x,y):=\max_{0\leq j\leq n-1}\limits d(f^{j}(x),f^j(y)).$$ Then  \emph{Bowen open  ball } of radius $\epsilon$ and  order $n$ in the metric $d_n$ around $x$ is   given by
$$B_n(x,\epsilon)=\{y\in X: d_n(x,y)<\epsilon\}.$$
Therefore, we say that $E$ is an $(n,\epsilon)$ spanning set of $Z\subset X$ if for any $y\in Z$ there exist $x\in E$ such that $d_n(x,y)\leq\epsilon$ and $F$ is an $(n,\epsilon)$-separated set of $Z\subset X$ if for any $x\neq y\in F$, $d_n(x,y)>\epsilon.$
Denote $\#A$ to be the cardinality of set $A$, $\partial A$ to be the boundary of set $A$ and $C(X,\mathbb{R})$ to be the set of all  continuous functions.
Denote $\mathcal{M}_f(X)$ to be the set that is consisted of all $f$-invariant Borel probability measures on $X$ and denote $\mathcal{M}_f^e(X)$ to be the set that is consisted of all $f$-ergodic invariant Borel probability measures on $X.$ 
Let $S_n\psi(x):=\sum_{i=0}^{n-1}\psi(f^ix)$ and 
$Var(\varphi,\epsilon):=\max\{|\varphi(x)-\varphi(y)|:d(x,y)<\epsilon\}$.

\subsection{Specification Property}
Let $(X,d,f)$ be a $\mathrm{TDS}$.   $f$ satisfies the  specification property means that for every $\epsilon>0$, there exists an integer $m=m(\epsilon)$ such that for any finite integer intervals $\{[a_j,b_j]_{j=1}^k\}$ with $a_{j+1}-b_j\ge m$ for $j\in \{1,\dots,k-1\}$ and any $x_1,\dots,x_k$ in $X$, there exists a point $x\in X$ such that
$$d_{b_j-a_j}(f^{a_j}x,x_j)<\epsilon \text{ for  all } \ j=1,\dots,k.$$

\subsection{Bowen metric mean dimension with potential for  subsets}

Given a set $Z\subset X, N\in \mathbb N, 0<\epsilon<1,s\in \mathbb R,$ and a potential $\psi\in C(X,\mathbb{R})$, we consider 
$$m_{N,\epsilon}(f,Z,s,d,\psi)=\inf\left\{\sum_{i\in I}\exp\left(-sn_i+S_{n_i}\psi(x_i)\cdot\Big(\log\frac{1}{\epsilon}\Big)\right)\right\},$$
where the infimum is taken over all finite or countable covers $\{B_{n_i}(x_i,\epsilon)\}_{i\in I}$ of $Z$ with $n_i \geq N$. Obviously, the limit 
$$m_{\epsilon}(f,Z,s,d,\psi)=\lim\limits_{N\to \infty}m_{N,\epsilon}(f,Z,s,d,\psi)$$
exists since $m_{N,\epsilon}(f,Z,s,d,\psi)$ is non-increasing when $N$ increases.
 $m_{\epsilon}(f,Z,s,d,\psi)$ has a critical value of parameter $s$ jumping from $\infty$ to $0$ and which is defined by
\begin{align*}
M_{\epsilon}(f,Z,d,\psi)&:=\inf\{s:m_{\epsilon}(f,Z,s,d,\psi)=0\}\\
&:=\sup\{s:m_{\epsilon}(f,Z,s,d,\psi)=\infty\}.
\end{align*}
The Bowen upper metric mean dimension of $f$ on $Z$ with potential $\psi$ is then defined as the following limit.
$$\overline{\rm{midm}}_M^B(f,Z,d,\psi)=\limsup\limits_{\epsilon \to 0}\frac{M_{\epsilon}(f,Z,d,\psi)}{\log\frac{1}{\epsilon}}.$$
Similarly, the  Bowen lower metric mean dimension of $f$ on $Z$ with potential $\psi$ is defined as 
$$\underline{\rm{midm}}_M^B(f,Z,d,\psi)=\liminf\limits_{\epsilon \to 0}\frac{M_{\epsilon}(f,Z,d,\psi)}{\log\frac{1}{\epsilon}}.$$
When  $\psi=0$ and   $Z=X$, $\overline{\rm{midm}}_M^B(f,X,d,\psi)$ is equal to the usual metric mean dimension  introduced by Lindenstrauss and Weiss \cite{lw00}.

\subsection{Metric mean dimension with potential for subsets}
Given a set $Z\subset X, n\in \mathbb N, 0<\epsilon<1$ and  $\psi\in C(X,\mathbb{R})$. We set
\begin{align*}
	&Q_n(Z,\psi,\epsilon):=\inf\left\{\sum_{x\in E}^{}\exp\left\{\left(\log\dfrac{1}{\epsilon} \right) S_n\psi(x)\right\}:E\ \text{is an}\ (n,\epsilon)\ \text{spanning set for}\ Z\right\},\\
	&P_n(Z,\psi,\epsilon):=\sup\left\{\sum_{x\in E}^{}\exp\left\{\left(\log\dfrac{1}{\epsilon} \right)S_n\psi(x)\right\}:E \ \text{is an}\ (n,\epsilon)\ \text{separated set for}\ Z\right\}.\\
\end{align*}	
The upper(lower) metric mean dimension of $f$ on $Z$ with potential $\psi$ are then defined as the following limits.
$$\overline{\rm{midm}}_M(f,Z,d,\psi):=\limsup\limits_{\epsilon \to 0}\frac{1}{\log\frac{1}{\epsilon}}\limsup_{n\to\infty}\frac{\log Q_n(Z,\psi,\epsilon)}{n},$$
$$\underline{\rm{midm}}_M(f,Z,d,\psi):=\liminf\limits_{\epsilon \to 0}\frac{1}{\log\frac{1}{\epsilon}}\limsup_{n\to\infty}\frac{\log Q_n(Z,\psi,\epsilon)}{n}.$$
The upper(lower) metric mean dimension of $f$ on $Z$ with   $\psi$  can also be defined via separated sets by replacing $Q_n$ by $P_n$ \cite{ycz22}.
Now, we give some properties for  upper (lower) metric mean dimension of $f$ on $Z$ with potential $\psi$.
\begin{prop}\label{prop A}
	For  $\psi\in C(X,\mathbb{R})$, $c\in \mathbb R$ and $Z\subset X$, we have 
	$$\overline{\rm{midm}}_M(f,Z,d,\psi+c)=\overline{\rm{midm}}_M(f,Z,d,\psi)+c.$$
\end{prop}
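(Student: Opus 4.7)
The plan is to reduce everything to the Birkhoff sum identity $S_n(\psi+c) = S_n\psi + nc$ and then track how an additive constant in the exponent propagates through the two $\limsup$ operations defining $\overline{\mathrm{mdim}}_M$.

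First I would compare the quantities $Q_n(Z,\psi+c,\epsilon)$ and $Q_n(Z,\psi,\epsilon)$ directly. Because $S_n(\psi+c)(x) = S_n\psi(x) + nc$, the summand for each point $x$ in any $(n,\epsilon)$-spanning set satisfies
\begin{equation*}
\exp\!\left\{\left(\log\tfrac1\epsilon\right)S_n(\psi+c)(x)\right\}
= \exp\!\left\{nc\log\tfrac1\epsilon\right\}\cdot \exp\!\left\{\left(\log\tfrac1\epsilon\right)S_n\psi(x)\right\}.
\end{equation*}
The prefactor $\exp\{nc\log(1/\epsilon)\}$ does not depend on the point or on the spanning set, so it pulls out of the sum and out of the infimum over $(n,\epsilon)$-spanning sets of $Z$. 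This yields the clean identity
\begin{equation*}
Q_n(Z,\psi+c,\epsilon) = \exp\!\left\{nc\log\tfrac1\epsilon\right\}\cdot Q_n(Z,\psi,\epsilon).
\end{equation*}

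Next I would take logarithms, divide by $n$, and observe that
\begin{equation*}
\frac{\log Q_n(Z,\psi+c,\epsilon)}{n} = c\log\tfrac1\epsilon + \frac{\log Q_n(Z,\psi,\epsilon)}{n}.
\end{equation*}
Since $c\log(1/\epsilon)$ is a constant in $n$, passing to $\limsup_{n\to\infty}$ preserves equality, and then dividing by $\log(1/\epsilon)$ gives
\begin{equation*}
\frac{1}{\log(1/\epsilon)}\limsup_{n\to\infty}\frac{\log Q_n(Z,\psi+c,\epsilon)}{n}
= c + \frac{1}{\log(1/\epsilon)}\limsup_{n\to\infty}\frac{\log Q_n(Z,\psi,\epsilon)}{n}.
\end{equation*}
Taking $\limsup_{\epsilon\to 0}$ on both sides yields the claim.

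There is no serious obstacle here; the proof is essentially bookkeeping. The only thing to be mildly careful about is that the identity $Q_n(Z,\psi+c,\epsilon) = \exp\{nc\log(1/\epsilon)\}\cdot Q_n(Z,\psi,\epsilon)$ holds at the level of the infimum (not merely up to a bounded factor), because the multiplicative constant is strictly positive and independent of the chosen spanning set; this is what lets the $c$ term survive cleanly after normalization by $\log(1/\epsilon)$ instead of being absorbed. The same argument, with $Q_n$ replaced by $P_n$, would also handle the separated-set variant, and replacing $\limsup$ by $\liminf$ in $\epsilon$ gives the analogous statement for $\underline{\mathrm{mdim}}_M$.
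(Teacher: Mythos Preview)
Your proof is correct and follows essentially the same approach as the paper: both start from $S_n(\psi+c)=S_n\psi+nc$, deduce $Q_n(Z,\psi+c,\epsilon)=\exp\{nc\log(1/\epsilon)\}\cdot Q_n(Z,\psi,\epsilon)$, and conclude by passing through the two $\limsup$ operations. Your version is simply a more detailed write-up of what the paper compresses into three sentences.
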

\begin{proof}
Obviously, $S_n(\psi+c)(x)=S_n\psi(x)+nc$. Then, $Q_n(Z,\psi+c,\epsilon)=Q_n(Z,\psi,\epsilon)\cdot\exp\{nc(\log\frac{1}{\epsilon})\}$. Immediately, $\overline{\rm{midm}}_M(f,Z,d,\psi+c)=\overline{\rm{midm}}_M(f,Z,d,\psi)+c.$
\end{proof}
For any $\psi\in C(X,\mathbb{R})$, by the compactness of $X$, there exists a constant $c$ such that $\psi+c>0$. Combing Proposition \ref{prop A} and \cite[Proposition 2.1]{zcz24}, we can assume  that  $\psi$ is a positive function and $\overline{\rm{midm}}_M(f,Z,d,\psi)$, $\overline{\rm{midm}}_M^B(f,Z,d,\psi)>0$.

Let $\mu\in\mathcal{M}_f(X)$, $\epsilon>0$, $n\in\mathbb{N}$ and $\delta>0$. Put $N_{n}^{\mu}(\delta,\epsilon):=\min\{\#E:E\subset X\ \text{and}\ \mu(\bigcup_{x\in E}B_n(x,\epsilon))>1-\delta\}.$ Define the upper and lower Katok's entropy of $\mu$ as 
\begin{align*}
	\overline{h}_{\mu}(f,\delta,\epsilon)&=\limsup_{n\to\infty}\frac{1}{n}\log N_{n}^{\mu}(\delta,\epsilon),\\
	\underline{h}_{\mu}(f,\delta,\epsilon)&=\liminf_{n\to\infty}\frac{1}{n}\log N_{n}^{\mu}(\delta,\epsilon).
\end{align*}
\begin{lem}\label{lem 2.2}\cite{shi22}
	Let $\mu\in \mathcal{M}_f^e(X), 0<\epsilon_2<\epsilon_1$ and $\mathcal{U}$ be a finite open over of $X$ with $diam(\mathcal{U})\leq\epsilon_1$ and $Leb(\mathcal{U})\ge\epsilon_2$. Then for any $\delta\in(0,1)$, we have
	$$\overline{h}_{\mu}(f,\delta,\epsilon_1)\leq \inf_{\xi\succ\mathcal{U}}h_{\mu}(f,\xi)\leq\overline{h}_{\mu}(f,\delta,\epsilon_2),$$
	$$\underline{h}_{\mu}(f,\delta,\epsilon_1)\leq \inf_{\xi\succ\mathcal{U}}h_{\mu}(f,\xi)\leq\underline{h}_{\mu}(f,\delta,\epsilon_2).$$
	\end{lem}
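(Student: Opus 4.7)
My plan is to prove the four inequalities in two pieces; the $\limsup$ and $\liminf$ versions are identical arguments, so I focus on the $\overline{h}_\mu$ chain. The left inequality is a Shannon-McMillan-Breiman (SMB) estimate that bounds Bowen covering numbers by partition covering numbers; the right inequality is a Lebesgue-number argument that pushes optimal Bowen covers at scale $\epsilon_2$ onto covers of $\mathcal{U}_n=\bigvee_{j=0}^{n-1}f^{-j}\mathcal{U}$, combined with a Romagnoli-type identification of cover entropy with the infimum of partition entropies.

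For $\overline{h}_\mu(f,\delta,\epsilon_1)\le\inf_{\xi\succ\mathcal{U}}h_\mu(f,\xi)$, fix any $\xi\succ\mathcal{U}$. Each atom of $\xi$ has diameter at most $\mathrm{diam}(\mathcal{U})\le\epsilon_1$, so each atom of $\xi_n$ has $d_n$-diameter at most $\epsilon_1$ and lies inside a Bowen ball of radius $\epsilon_1$ centered at any of its points. SMB applied to the ergodic measure $\mu$ yields, for any $\gamma>0$ and all large $n$, a subfamily of $\xi_n$ of cardinality at most $e^{n(h_\mu(f,\xi)+\gamma)}$ whose union has $\mu$-measure exceeding $1-\delta$. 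Replacing each such atom by a containing Bowen ball gives $N_n^\mu(\delta,\epsilon_1)\le e^{n(h_\mu(f,\xi)+\gamma)}$; passing to $\limsup$, letting $\gamma\to 0$, and infimizing over $\xi$ closes this half.

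For $\inf_{\xi\succ\mathcal{U}}h_\mu(f,\xi)\le\overline{h}_\mu(f,\delta,\epsilon_2)$, the hypothesis $\mathrm{Leb}(\mathcal{U})\ge\epsilon_2$ implies that for each $x$ and each $0\le j<n$ there is some $U_j(x)\in\mathcal{U}$ with $B(f^jx,\epsilon_2)\subseteq U_j(x)$, so $B_n(x,\epsilon_2)\subseteq\bigcap_{j=0}^{n-1}f^{-j}U_j(x)$, a single atom of $\mathcal{U}_n$. Hence an optimal covering realizing $N_n^\mu(\delta,\epsilon_2)$ produces a subfamily $\mathcal{V}\subseteq\mathcal{U}_n$ of no larger cardinality whose union has $\mu$-measure exceeding $1-\delta$. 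By disjointifying $\mathcal{U}_n$ in an enumeration that lists the atoms of $\mathcal{V}$ first, and amalgamating the remaining atoms of total mass $<\delta$ into a single residual piece, one obtains a partition $\eta\succ\mathcal{U}_n$ with $H_\mu(\eta)\le\log N_n^\mu(\delta,\epsilon_2)+H(\delta)$, where $H(\delta)=-\delta\log\delta-(1-\delta)\log(1-\delta)$. Dividing by $n$, taking $\limsup$, and invoking the identity $h_\mu(f,\mathcal{U})=\inf_{\xi\succ\mathcal{U}}h_\mu(f,\xi)$ (valid for ergodic $\mu$ via Romagnoli) finishes the proof.

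The main obstacle is the amalgamation step: a naive disjointification of $\mathcal{U}_n$ would contribute a residual term of order $n\delta\log|\mathcal{U}|$ coming from the $|\mathcal{U}|^n-|\mathcal{V}|$ unused atoms, and this does not vanish after division by $n$. The fix is to collapse the entire $\mu$-measure-$<\delta$ complement to a single atom and apply the elementary entropy inequality $H_\mu(\eta)\le H(p,1-p)+p\log|\eta_{\mathrm{good}}|+(1-p)\log|\eta_{\mathrm{bad}}|$ with $|\eta_{\mathrm{bad}}|=1$, which turns the $n\delta\log|\mathcal{U}|$ defect into the harmless $O(1)$ binary-entropy term $H(\delta)$. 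Coupled with Romagnoli's identification of $h_\mu(f,\mathcal{U})$ with $\inf_\xi h_\mu(f,\xi)$ for ergodic $\mu$, this yields the stated bound, and the $\liminf$ chain follows by substituting $\liminf$ for $\limsup$ throughout the same argument.
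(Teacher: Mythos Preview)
The paper does not prove this lemma; it is imported from Shi \cite{shi22}, so there is no in-paper argument to compare against. Your SMB argument for the left inequality $\overline{h}_\mu(f,\delta,\epsilon_1)\le\inf_{\xi\succ\mathcal{U}}h_\mu(f,\xi)$ is correct. The right inequality, however, has a genuine gap in the amalgamation step.

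You claim that after collapsing the complement $R=X\setminus\bigcup\mathcal{V}$ into a single atom, the resulting partition $\eta=\{\hat V_1,\dots,\hat V_m,R\}$ still refines $\mathcal{U}_n$. It does not: each $\hat V_i\subseteq V_i\in\mathcal{U}_n$, but the residual piece $R$ will in general not lie inside any single member of $\mathcal{U}_n$, so $\eta\not\succ\mathcal{U}_n$ and Romagnoli's identity $\inf_{\xi\succ\mathcal{U}}h_\mu(f,\xi)=\lim_n\frac{1}{n}\inf_{\eta\succ\mathcal{U}^n}H_\mu(\eta)$ cannot be applied to this $\eta$. If instead you keep the full disjointification (which \emph{does} refine $\mathcal{U}_n$), the residual atoms contribute at most $\delta\cdot n\log|\mathcal{U}|$ to $H_\mu(\eta)$, and after dividing by $n$ you obtain only
\[
\inf_{\xi\succ\mathcal{U}}h_\mu(f,\xi)\le\overline{h}_\mu(f,\delta,\epsilon_2)+\delta\log|\mathcal{U}|,
\]
precisely the defect your last paragraph claimed to have eliminated. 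Since $\overline{h}_\mu(f,\delta,\epsilon_2)$ is monotone \emph{increasing} as $\delta\downarrow 0$, you cannot remove the extra term by shrinking $\delta$. The route in Shi's paper bypasses partition construction entirely: for ergodic $\mu$ there is a Katok-type identity for cover entropy,
\[
\inf_{\xi\succ\mathcal{U}}h_\mu(f,\xi)=\lim_{n\to\infty}\frac{1}{n}\log N_\mu(\mathcal{U}^n,\delta)\qquad\text{for every }\delta\in(0,1),
\]
where $N_\mu(\mathcal{V},\delta)$ is the minimal cardinality of a subfamily of $\mathcal{V}$ whose union has $\mu$-mass exceeding $1-\delta$. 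The Lebesgue-number inclusion $B_n(x,\epsilon_2)\subseteq(\text{element of }\mathcal{U}^n)$ then gives $N_\mu(\mathcal{U}^n,\delta)\le N_n^\mu(\delta,\epsilon_2)$ directly, and the $\delta$-independence of the limit is what does the work your amalgamation trick was meant to replace.
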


\begin{prop}\label{prop 2.1}\cite[Proposition 1.3]{cl23}
	Let $(X,f)$ be a $\mathrm{TDS}$ and $\mu\in\mathcal{M}_f^e(X)$. For $\epsilon>0,\ \delta\in(0,1)$ and $\psi \in C(X,\mathbb{R})$, we have
	$$\lim\limits_{\delta\to0}\limsup_{n\to\infty}\frac{1}{n}\log{N_n^{\mu}\left(\psi,\frac{\delta}{2},\epsilon\right)}\ge\lim\limits_{\delta \to0}\overline{h}_{\mu}(f,\delta,\epsilon)+\Big(\log\frac{1}{4\epsilon}\Big)\int\psi \mathrm{d}{\mu},$$
	$$\lim\limits_{\delta\to0}\liminf_{n\to\infty}\frac{1}{n}\log{N_n^{\mu}\left(\psi,\frac{\delta}{2},\epsilon\right)}\ge\lim\limits_{\delta \to0}\underline{h}_{\mu}(f,\delta,\epsilon)+\Big(\log\frac{1}{4\epsilon}\Big)\int\psi \mathrm{d}{\mu}.$$
where
\begin{align*}
    &N_n^{\mu}(\psi,\delta,\epsilon):=\\
	&\inf\left\{\sum_{x\in E}^{}\exp\left\{\left(\log\dfrac{1}{\epsilon} \right)S_n\psi(x)\right\}:E \ \text{is an}\ (n,\epsilon)\ \text{spanning set of}\ G\subset X\ \text{with}\ \mu(G)\ge{1-\delta}\right\}\\
\end{align*}
\end{prop}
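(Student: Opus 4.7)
The upper bound $\overline{\mdim}_M^B(f,I_\varphi,d,\psi)\le\overline{\mdim}_M(f,X,d,\psi)$ (and the $\liminf$ analogue) is essentially tautological: any $(n,\epsilon)$-spanning set $E$ of $X$ gives a cover of $I_\varphi\subset X$ by Bowen balls $\{B_n(x,\epsilon)\}_{x\in E}$, and inserting such a constant-order cover into $m_{N,\epsilon}$ with $N=n\to\infty$ forces $M_\epsilon(f,I_\varphi,d,\psi)\le\limsup_n\tfrac{1}{n}\log Q_n(X,\psi,\epsilon)$; dividing by $\log(1/\epsilon)$ and taking $\limsup_{\epsilon\to 0}$ yields the claim. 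So the real content of the theorem is the reverse inequality, which I plan to prove under the standing assumption $I_\varphi\ne\emptyset$, following the Thompson/Lima--Varandas Moran-construction template but decorated to accommodate the $\psi$-weighted exponential factors appearing both in the outer definition of $\overline{\mdim}_M^B$ and in the spanning counts supplied by Proposition \ref{prop 2.1}.

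Since $I_\varphi\ne\emptyset$, the range $L_\varphi:=\{\int\varphi\,d\mu:\mu\in\MM_f(X)\}$ is a nondegenerate compact interval; I fix $\alpha_1<\alpha_2$ in $L_\varphi$ and by ergodic decomposition pick ergodic $\mu_1,\mu_2$ with $\int\varphi\,d\mu_i=\alpha_i$. This reduces matters to lower-bounding the Bowen metric mean dimension of the two-sided divergence set $I_\varphi(\alpha_1,\alpha_2)\subset I_\varphi$ on which $\liminf_n\tfrac{1}{n}S_n\varphi\le\alpha_1$ and $\limsup_n\tfrac{1}{n}S_n\varphi\ge\alpha_2$. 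Fix $\gamma<\overline{\mdim}_M(f,X,d,\psi)$ and $\delta\in(0,1)$; Proposition \ref{prop A} lets me assume $\psi>0$, and Lemma \ref{lem 2.2} together with Proposition \ref{prop 2.1} furnishes a scale sequence $\epsilon_k\downarrow 0$ and ergodic measures $\mu_k\in\MM_f^e(X)$ whose $\psi$-weighted spanning counts $N_n^{\mu_k}(\psi,\delta/2,\epsilon_k)$ grow at rate essentially $\gamma\log(1/\epsilon_k)$. With specification gap $m=m(\epsilon_k)$, I run the standard tree construction: at each level I insert a long $\mu_k$-typical block whose $(n,\epsilon_k)$-separated branching realizes the above spanning count, and between consecutive $\mu_k$-blocks I alternately glue shorter $\mu_1$- and $\mu_2$-typical blocks that force $\tfrac{1}{n}S_n\varphi$ to oscillate between $\alpha_1$ and $\alpha_2$ along the resulting trajectory. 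Block lengths are tuned so the $\mu_k$-branching dominates, the oscillations persist in the limit, and the specification gaps together with the $\mu_1,\mu_2$-blocks are length-negligible; the nested intersection yields a Moran-like compact $F\subset I_\varphi(\alpha_1,\alpha_2)$ carrying a natural equidistribution measure $\nu$.

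To conclude I invoke a mass distribution principle for $m_\epsilon$. Bowen separation along the $\mu_k$-blocks ensures that any ball $B_n(x,\epsilon_k/4)$ meets $F$ inside the descendants of at most one tree node at the appropriate level, from which
\[
\nu\bigl(B_n(x,\epsilon_k/4)\bigr)\le C_k\exp\!\bigl(-(\gamma-o(1))\,n\log(1/\epsilon_k)+S_n\psi(x)\log(1/\epsilon_k)\bigr)
\]
for all sufficiently large $n$ and all $x\in F$. Summing this bound over any Bowen cover of $F$ against $\nu(F)=1$ forces $M_{\epsilon_k/4}(f,I_\varphi,d,\psi)\ge(\gamma-o(1))\log(1/\epsilon_k)$; dividing by $\log(1/\epsilon_k)$ and sending $k\to\infty$ delivers $\overline{\mdim}_M^B(f,I_\varphi,d,\psi)\ge\gamma$, and tracking $\liminf$ throughout (in Proposition \ref{prop 2.1} and in the outer scale limit) gives the $\underline{\mdim}_M^B$ version. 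The main obstacle is precisely this last step: the $\log(1/\epsilon_k)$ factor sitting inside the exponents of $N_n^{\mu_k}(\psi,\cdot,\epsilon_k)$ must be matched exactly with the $\log(1/\epsilon)$ normalization in the outer definition of $\overline{\mdim}_M^B$ when passing from the Moran measure $\nu$ to the critical exponent $M_\epsilon$. This forces careful Bowen-continuity bookkeeping of $S_n\psi$ along each $\mu_k$-typical block (so that $S_n\psi(x)$ is essentially constant across the tree-descendants of a fixed node) and requires the $\mu_1,\mu_2$-blocks to be length-negligible so that their $\psi$-accumulation does not contaminate the final exponent.
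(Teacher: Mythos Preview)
Your proposal does not address Proposition~\ref{prop 2.1} at all. That proposition is an inequality relating the weighted spanning quantity $N_n^{\mu}(\psi,\delta,\epsilon)$ to the Katok entropies $\overline{h}_\mu,\underline{h}_\mu$ plus the potential term $(\log\tfrac{1}{4\epsilon})\int\psi\,d\mu$ for a single ergodic measure $\mu$; the paper does not prove it but simply imports it from \cite{cl23}. What you have written is instead a sketch of Theorem~\ref{thm 1.1}, the main result on $I_\varphi$.

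Read as a proof of Theorem~\ref{thm 1.1}, your outline follows the same Thompson-style Moran scheme as the paper, but with a structural difference that introduces a genuine gap. You separate the roles of ``branching'' and ``oscillation'': long $\mu_k$-typical blocks supply the $(n,\epsilon_k)$-separated branching realizing the rate $\gamma\log(1/\epsilon_k)$, while \emph{short, length-negligible} $\mu_1,\mu_2$-blocks interspersed between them are supposed to force $\tfrac{1}{n}S_n\varphi$ to oscillate between $\alpha_1$ and $\alpha_2$. These two requirements are incompatible: if the $\mu_1,\mu_2$-blocks have negligible total length, the Birkhoff averages along the constructed orbit are governed by the dominant $\mu_k$-blocks, i.e.\ by $\int\varphi\,d\mu_k$, and there is no reason the averages ever approach $\alpha_1$ or $\alpha_2$. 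The paper avoids this by making the branching blocks \emph{themselves} carry the oscillation: in Lemma~\ref{lem 3.1} it selects $\mu_1$ ergodic near the supremum in Lemma~\ref{lem 2.3} and then perturbs to $\mu_2=t\mu_1+(1-t)\nu$ with $\int\varphi\,d\mu_2\neq\int\varphi\,d\mu_1$ while keeping $\mu_2$ near the same supremum; Lemma~\ref{lem 3.3} replaces each $\mu_i$ by a finite convex combination of ergodic measures, and the Moran levels alternate (via $\rho(k)$) between $\mu_1$-typical and $\mu_2$-typical blocks, so that the same long blocks provide both the separated branching and the $\varphi$-oscillation (Lemma~\ref{lem 3.10}). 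Your scheme could be repaired by arranging that the high-rate measures $\mu_k$ themselves alternate between two $\varphi$-levels, but as stated the oscillation step does not go through.
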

The following variational principles for the metric mean dimension with potential was obtained by Chen and Li.
\begin{lem}\label{lem 2.2}\cite[Theorem C]{cl23}
    Let $(X,d,f)$ be a $\mathrm{TDS}$ and $\epsilon>0$. For any $\delta\in(0,1)$ and $\psi\in C(X,\mathbb{R})$, we have that
    $$\overline{\rm{mdim}}_M(f,X,d,\psi)=\limsup_{\epsilon\to 0}\frac{1}{\log{\frac{1}{\epsilon}}}\sup_{\mu\in\mathcal{M}_f^e(X)}\Lambda(\epsilon,\delta,\psi),$$
    $$\underline{\rm{mdim}}_M(f,X,d,\psi)=\liminf_{\epsilon\to 0}\frac{1}{\log{\frac{1}{\epsilon}}}\sup_{\mu\in\mathcal{M}_f^e(X)}\Lambda(\epsilon,\delta,\psi).$$
where $\Lambda(\epsilon,\delta,\psi):=\limsup_{n\to\infty}\frac{1}{n}\log{N_n^{\mu}(\psi,\delta,\epsilon)}$.
\end{lem}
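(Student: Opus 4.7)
The plan is to prove the two halves of the equality separately, adapting the architecture of Walters' classical pressure variational principle while carefully tracking the scale-dependent coefficient $\log\frac{1}{\epsilon}$ that multiplies the potential.

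For the easy direction $\sup_{\mu\in\MM_f^e(X)}\Lambda(\epsilon,\delta,\psi)\leq \limsup_{n}\frac{1}{n}\log Q_n(X,\psi,\epsilon)$, I take $G=X$ (which satisfies $\mu(G)=1\geq 1-\delta$) in the infimum defining $N_n^{\mu}(\psi,\delta,\epsilon)$: every $(n,\epsilon)$-spanning set of $X$ is then an admissible witness, so $N_n^{\mu}(\psi,\delta,\epsilon)\leq Q_n(X,\psi,\epsilon)$. Applying $\frac{1}{n}\log$, $\limsup_{n}$, $\sup_{\mu}$, dividing by $\log\frac{1}{\epsilon}$, and then $\limsup$ (respectively $\liminf$) as $\epsilon\to 0$ yields the $\leq$ halves of both identities.

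For the reverse direction, I fix $\epsilon,\eta>0$ and pick $(n,\epsilon)$-separated sets $E_n\subset X$ realizing $\sum_{x\in E_n}\exp\{(\log\frac{1}{\epsilon})S_n\psi(x)\}\geq e^{-n\eta}P_n(X,\psi,\epsilon)$, using that $P_n$ and $Q_n$ define the same metric mean dimension with potential. Construct the weighted atomic measures $\sigma_n := Z_n^{-1}\sum_{x\in E_n}\exp\{(\log\frac{1}{\epsilon})S_n\psi(x)\}\delta_x$, their invariant averages $\mu_n := \frac{1}{n}\sum_{k=0}^{n-1}f_*^k\sigma_n$, and pass to a weak-$*$ subsequential limit $\mu\in\MM_f(X)$. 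A Misiurewicz-style argument applied to the $\epsilon$-rescaled potential $(\log\frac{1}{\epsilon})\psi$ then delivers, for every partition $\xi$ with $\mu(\partial\xi)=0$ and $\diam\xi<\epsilon$, the inequality
\[
h_\mu(f,\xi) + \Big(\log\frac{1}{\epsilon}\Big)\int\psi\,d\mu \;\geq\; \limsup_{n}\frac{1}{n}\log P_n(X,\psi,\epsilon) - \eta.
\]
Affineness of $h_\mu(f,\xi)$ and of $\mu\mapsto\int\psi\,d\mu$ in $\mu$, together with ergodic decomposition and a measurable selection, yield an ergodic $\nu$ inheriting the same lower bound. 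Picking a finite open cover $\UU$ with $\diam\UU<\epsilon$ and Lebesgue number at least some $\epsilon_0<\epsilon$, refined by $\xi$, the Shi lemma recalled above gives $h_\nu(f,\xi)\leq \overline{h}_\nu(f,\delta,\epsilon_0)$ (respectively the lower version), and Proposition \ref{prop 2.1} converts this into a lower bound for $\Lambda(\epsilon_0,\delta/2,\psi)$ with coefficient $\log\frac{1}{4\epsilon_0}$ riding on $\int\psi\,d\nu$.

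Dividing by $\log\frac{1}{\epsilon}$ and letting $\epsilon\to 0$ with $\epsilon_0$ comparable (so that $\log\frac{1}{4\epsilon_0}/\log\frac{1}{\epsilon}\to 1$), the inequality survives in the limit; a final $\eta\to 0$ closes both the $\limsup$ and $\liminf$ identities. The chief obstacle is the bookkeeping of four distinct $\epsilon$-scales — partition diameter, cover Lebesgue number, the inner Katok scale, and the weighting coefficient $\log\frac{1}{\epsilon}$ — together with verifying that the classical Misiurewicz estimate, originally stated with coefficient $1$ on the potential, still produces the claimed inequality when the potential is rescaled by $\log\frac{1}{\epsilon}$ (legitimate since $\epsilon$ is fixed throughout that portion of the argument). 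Once these scales are aligned, all discrepancy ratios are asymptotically $1$ after division, and the limits commute in the required order.
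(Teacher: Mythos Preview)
The paper does not prove this lemma at all: it is quoted verbatim as \cite[Theorem C]{cl23} and used as a black box, so there is no in-paper argument to compare your proposal against.

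That said, your outline is the natural one and matches the architecture one would expect in \cite{cl23}: the easy direction via $G=X$ is immediate, and the hard direction via a Misiurewicz construction followed by ergodic decomposition and then Shi's lemma plus Proposition~\ref{prop 2.1} to pass to the Katok-type quantity $N_n^\mu(\psi,\delta,\epsilon)$ is exactly the route suggested by the surrounding lemmas the paper imports from \cite{cl23} and \cite{shi22}. Two points deserve a closer look when you write this out. First, the clean way to pass from the (possibly non-ergodic) Misiurewicz limit $\mu$ to an ergodic $\nu$ at the partition level is to work with $\inf_{\xi\succ\mathcal U}h_\mu(f,\xi)$ for a fixed open cover $\mathcal U$ with $\diam\mathcal U<\epsilon$, since \cite[Proposition 5]{hmry04} (used in the paper's Lemma~\ref{lem 3.3}) gives the ergodic-decomposition identity for precisely this quantity; this sidesteps any worry about which $\xi$ satisfies $\nu(\partial\xi)=0$. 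Second, the statement asserts the identity for \emph{every} fixed $\delta\in(0,1)$, whereas Proposition~\ref{prop 2.1} is stated with $\lim_{\delta\to 0}$; you should note that Shi's bound $\inf_{\xi\succ\mathcal U}h_\nu(f,\xi)\le \overline{h}_\nu(f,\delta,\epsilon_2)$ already holds for each fixed $\delta$, so the $\delta$-limit in Proposition~\ref{prop 2.1} can be bypassed at that step. With those two refinements your sketch goes through.
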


\begin{lem}\label{lem 2.3}\cite[Theorem B]{cl23}
    Let $(X,d,f)$ be a $\mathrm{TDS}$ and $\epsilon>0$. For any $\delta\in(0,1)$ and $\psi\in C(X,\mathbb{R})$, we have that
    $$\overline{\rm{mdim}}_M(f,X,d,\psi)=\limsup_{\epsilon\to 0}\frac{1}{\log{\frac{1}{\epsilon}}}\sup_{\mu\in\mathcal{M}_f^e(X)}\left\{\inf_{diam\xi<\epsilon}h_{\mu}(f,\xi)+\left(\log\dfrac{1}{\epsilon} \right)\int \psi \mathrm{d}{\mu}\right\},$$
    $$\underline{\rm{mdim}}_M(f,X,d,\psi)=\liminf_{\epsilon\to 0}\frac{1}{\log{\frac{1}{\epsilon}}}\sup_{\mu\in\mathcal{M}_f^e(X)}\left\{\inf_{diam\xi<\epsilon}h_{\mu}(f,\xi)+\left(\log\dfrac{1}{\epsilon} \right)\int \psi \mathrm{d}{\mu}\right\}.$$
\end{lem}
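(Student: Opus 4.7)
The plan is to derive Lemma 2.3 from the Chen--Li variational formula of the preceding statement (which writes $\overline{\mathrm{mdim}}_M$ via $\Lambda(\epsilon,\delta,\psi)=\limsup_n\tfrac{1}{n}\log N_n^\mu(\psi,\delta,\epsilon)$) by squeezing $\Lambda(\epsilon,\delta,\psi)$ between two quantities of the shape
$$\inf_{\mathrm{diam}\,\xi<\epsilon'} h_\mu(f,\xi)+\Bigl(\log\tfrac{1}{\epsilon'}\Bigr)\!\int\!\psi\,d\mu$$
evaluated at two scales $\epsilon'\asymp\epsilon$. Once this pinching is available, dividing by $\log(1/\epsilon)$ absorbs the scale mismatch into an $O(1/\log(1/\epsilon))$ correction, so the $\limsup_{\epsilon\to 0}$ and $\liminf_{\epsilon\to 0}$ of both sides coincide and the upper and lower formulas are obtained simultaneously.

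For the lower bound on $\Lambda(\epsilon,\delta,\psi)$, I would apply Proposition 2.1 to get
$$\lim_{\delta\to 0}\limsup_{n\to\infty}\tfrac{1}{n}\log N_n^\mu(\psi,\delta/2,\epsilon)\;\geq\;\lim_{\delta\to 0}\overline{h}_\mu(f,\delta,\epsilon)+\Bigl(\log\tfrac{1}{4\epsilon}\Bigr)\!\int\!\psi\,d\mu,$$
and then combine this with the earlier Katok-entropy equality (Shi's lemma) applied to a finite open cover $\mathcal U$ with $\mathrm{diam}(\mathcal U)\leq\epsilon$ and Lebesgue number $\geq\epsilon/2$: this identifies $\lim_{\delta\to 0}\overline{h}_\mu(f,\delta,\epsilon)$ with $\inf_{\xi\succ\mathcal U}h_\mu(f,\xi)$, and since any $\xi\succ\mathcal U$ has $\mathrm{diam}\,\xi\leq\epsilon$, this dominates $\inf_{\mathrm{diam}\,\xi<2\epsilon}h_\mu(f,\xi)$. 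For the upper bound I would fix any partition $\xi$ with $\mathrm{diam}\,\xi<\epsilon$, use Birkhoff's ergodic theorem to select $G_n$ with $\mu(G_n)>1-\delta$ on which $S_n\psi(x)\leq n\!\int\!\psi\,d\mu+n\eta$ for large $n$, and run a standard Katok argument to cover $G_n$ by an $(n,\epsilon)$-spanning set of cardinality $\leq\exp(n(h_\mu(f,\xi)+\eta))$. Weighting each spanning point by $\exp(\log(1/\epsilon)S_n\psi(x))$ gives
$$N_n^\mu(\psi,\delta,\epsilon)\;\leq\;\exp\!\Bigl(n(h_\mu(f,\xi)+\eta)+n\log\tfrac{1}{\epsilon}\bigl(\!\!\int\!\psi\,d\mu+\eta\bigr)\Bigr),$$
and taking $\limsup_n$, then $\eta\to 0$, then $\inf_\xi$ produces the matching upper bound on $\Lambda(\epsilon,\delta,\psi)$.

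Substituting both inequalities into Lemma 2.2 and passing to $\sup_{\mu\in\mathcal M_f^e(X)}$ followed by $\limsup_{\epsilon\to 0}$ (resp.\ $\liminf_{\epsilon\to 0}$) yields the two claimed identities, with the scale discrepancy between $\epsilon$ and $4\epsilon$ vanishing after normalisation by $\log(1/\epsilon)$. The main obstacle is the upper-bound direction: because the potential is weighted by the large factor $\log(1/\epsilon)$, the naive estimate $S_n\psi\leq n\|\psi\|_\infty$ destroys the formula, so one must replace $S_n\psi$ by $n\!\int\!\psi\,d\mu$ via Birkhoff on a set of large $\mu$-measure while tracking the residual error $\eta\log(1/\epsilon)$ through the nested limits ($n\to\infty$, then $\eta,\delta\to 0$, then $\epsilon\to 0$); arranging the diagonal so that this residual is truly $o(\log(1/\epsilon))$ at every stage, uniformly in the partition $\xi$, is the delicate part of the argument.
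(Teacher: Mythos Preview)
The paper does not prove this lemma at all: it is stated with the citation \cite[Theorem B]{cl23} and no proof is given. The authors treat it as a black-box input from Chen--Li, exactly as they do for Lemma~2.2 (which is \cite[Theorem C]{cl23}) and Proposition~2.1 (which is \cite[Proposition 1.3]{cl23}).

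Your proposal, by contrast, is an attempt to \emph{derive} Theorem~B of \cite{cl23} from Theorem~C and Proposition~1.3 of the same reference together with Shi's Katok-entropy comparison. That is a different exercise from what the paper does, but it is a reasonable sketch of how the result can be obtained. The lower bound via Proposition~2.1 and Shi's lemma is sound: the scale mismatches ($\epsilon$ versus $4\epsilon$, and $\mathrm{diam}(\mathcal U)$ versus $\mathrm{Leb}(\mathcal U)$) only affect the numerator by bounded additive constants, which wash out after dividing by $\log(1/\epsilon)$. For the upper bound your Birkhoff/Shannon--McMillan--Breiman argument also works, but your closing worry about the residual $\eta\log(1/\epsilon)$ is overstated: the limits are \emph{not} diagonal. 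For each fixed $\epsilon$ you send $n\to\infty$ and then $\eta\to 0$, obtaining a clean bound of the form $\inf_{\mathrm{diam}\,\xi<\epsilon}h_\mu(f,\xi)+\log(1/\epsilon)\!\int\psi\,d\mu+\log(1/\epsilon)\,\mathrm{Var}(\psi,\epsilon)$; only \emph{after} that do you divide by $\log(1/\epsilon)$ and let $\epsilon\to 0$, at which point the $\mathrm{Var}(\psi,\epsilon)$ term vanishes. One genuine detail you should make explicit is that the $(n,\epsilon)$-spanning points can be chosen inside the Birkhoff-good set $G_n$ (or else you must pass from $S_n\psi(x)$ to $S_n\psi(y)$ for a nearby $y\in G_n$ and absorb the extra $n\,\mathrm{Var}(\psi,\epsilon)$), otherwise the weighted sum is not controlled.
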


\section{Proof of main result} 
In this section, we give the proof of Theorem \ref{thm 1.1}. We now only consider the situation of upper metric mean dimension with potential. The result of lower metric mean dimension with potential can be proved in the similar way. We may assume that $I_{\varphi}\neq\emptyset$ and show that 
$$\overline{\rm{mdim}}_M^B(f,I_{\varphi},d,\psi)=\overline{\rm{mdim}}_M(f,X,d,\psi).$$
Since $I_{\varphi}\subset X$, we have
$$\overline{\rm{mdim}}_M^B(f,I_{\varphi},d,\psi)\leq\overline{\rm{mdim}}_M^B(f,X,d,\psi).$$
\cite[Proposition 3.4]{ycz22} shows that 
$$\overline{\rm{mdim}}_M(f,X,d,\psi)=\overline{\rm{mdim}}_M^B(f,X,d,\psi).$$
This implies $$\overline{\rm{mdim}}_M^B(f,I_{\varphi},d,\psi)\leq\overline{\rm{mdim}}_M(f,X,d,\psi).$$
Thus, we only need to show 
$$\overline{\rm{mdim}}_M^B(f,I_{\varphi},d,\psi)\ge\overline{\rm{mdim}}_M(f,X,d,\psi).$$
Let $S:=\overline{\rm{mdim}}_M(f,X,d,\psi)<\infty$.  Without losing generality, we only need to consider the case that $S>0$. 
\begin{lem}\label{lem 3.1}
   Given sufficiently small $\gamma\in(0,\min\{S/7,1\})$. There exists $\epsilon_0=\epsilon_0(\gamma)>0$ such that 
    \begin{align}
        \label{3.1}&\log{\frac{1}{5\epsilon_0}}>1,\\ 
		\label{3.2}&S-\frac{\gamma}{2}\leq\frac{1}{\log{\frac{1}{5\epsilon_0}}}\sup_{\mu\in\mathcal{M}_f^e(X)}\left\{\inf_{diam\xi<5\epsilon_0}h_{\mu}(f,\xi)+\Big(\log{\frac{1}{5\epsilon_0}}\Big)\int \psi \mathrm{d}{\mu}\right\},\\
		\label{3.3}&\sup_{\epsilon\in(0,5\epsilon_0)}\frac{M_{\epsilon}(f,I_{\varphi},d,\psi)}{\log{\frac{1}{\epsilon}}}\leq\overline{\rm{mdim}}_M^B(f,I_{\varphi},d,\psi)+\gamma.
\end{align}
Besides, there exist $\mu_1,\mu_2\in\mathcal{M}_f(X)$ such that
$$\int\varphi \mathrm{d}{\mu_1}\neq\int\varphi \mathrm{d}{\mu_2}$$
$$\frac{1}{\log{\frac{1}{5\epsilon_0}}}\inf_{diam\xi<5\epsilon_0}h_{\mu_i}(f,\xi)+\int \psi \mathrm{d}{\mu_i}>S-\gamma\ for\ i=1,2.$$
\end{lem}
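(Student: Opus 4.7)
The plan is to choose $\epsilon_0$ small enough to satisfy \eqref{3.1}--\eqref{3.3} simultaneously, and then to build $\mu_1,\mu_2$ by perturbing a near-optimizer of the variational expression in \eqref{3.2}. Condition \eqref{3.1} is purely arithmetic and asks only that $5\epsilon_0<1/e$. Condition \eqref{3.3} is immediate from the definition of $\overline{\mathrm{mdim}}_M^B(f,I_\varphi,d,\psi)$ as $\limsup_{\epsilon\to 0}M_\epsilon(f,I_\varphi,d,\psi)/\log(1/\epsilon)$: for every sufficiently small $\epsilon$ the quotient lies within $\gamma$ of this $\limsup$, so the supremum over $\epsilon\in(0,5\epsilon_0)$ is controlled once $\epsilon_0$ is small. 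Condition \eqref{3.2} is where Lemma \ref{lem 2.3} is used: since the $\limsup$ as $\epsilon\to 0$ of the right-hand side of that variational principle equals $S$, there exist arbitrarily small $\epsilon'$ for which the quotient exceeds $S-\gamma/2$, and reparametrizing $5\epsilon_0=\epsilon'$ yields \eqref{3.2}. Taking $\epsilon_0$ to be the minimum of the three choices gives the desired $\epsilon_0(\gamma)$.

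For the two measures, the nonemptiness of $I_\varphi$ is the essential input. Fix any $x\in I_\varphi$. The empirical measures $\frac{1}{n}\sum_{i=0}^{n-1}\delta_{f^ix}$ form a precompact subset of $\mathcal{M}_f(X)$ whose limit points are all $f$-invariant, and since $\frac{1}{n}S_n\varphi(x)$ diverges there exist at least two such limits $\nu_1,\nu_2\in\mathcal{M}_f(X)$ with $\int\varphi\,d\nu_1\neq\int\varphi\,d\nu_2$. Using \eqref{3.2}, select an ergodic $\mu^*\in\mathcal{M}_f^e(X)$ satisfying
$$\frac{1}{\log\frac{1}{5\epsilon_0}}\inf_{\mathrm{diam}\,\xi<5\epsilon_0}h_{\mu^*}(f,\xi)+\int\psi\,d\mu^*>S-\tfrac{\gamma}{2},$$
and, for a small parameter $\lambda>0$ to be chosen below, set $\mu_i:=(1-\lambda)\mu^*+\lambda\nu_i$ for $i=1,2$. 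Then $\mu_1,\mu_2\in\mathcal{M}_f(X)$ and $\int\varphi\,d\mu_1-\int\varphi\,d\mu_2=\lambda(\int\varphi\,d\nu_1-\int\varphi\,d\nu_2)\neq 0$.

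It remains to verify the entropy-plus-potential bound for $\mu_1,\mu_2$. Concavity of $h_\mu(f,\xi)$ in $\mu$ together with nonnegativity of $h_{\nu_i}(f,\xi)$ yields $h_{\mu_i}(f,\xi)\geq(1-\lambda)h_{\mu^*}(f,\xi)$ for every partition $\xi$, and taking infimum over $\xi$ with $\mathrm{diam}\,\xi<5\epsilon_0$ preserves this inequality. Combined with $\int\psi\,d\mu_i=(1-\lambda)\int\psi\,d\mu^*+\lambda\int\psi\,d\nu_i\geq(1-\lambda)\int\psi\,d\mu^*$, justified by the positivity reduction $\psi>0$ noted after Proposition \ref{prop A}, the left-hand side of the target estimate for $\mu_i$ is at least $(1-\lambda)(S-\gamma/2)$, which exceeds $S-\gamma$ provided one takes $\lambda<\gamma/(2S-\gamma)$. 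The main delicate point is precisely this last step: ergodicity of $\mu^*$ is needed to invoke Lemma \ref{lem 2.3}, yet distinguishing the $\varphi$-integrals forces us to leave the ergodic class by convex combination, and one must verify that this perturbation does not destroy the entropy estimate. Concavity of $h_\mu(f,\xi)$ in $\mu$ passes cleanly through the infimum over partitions and makes the loss linear in $\lambda$, which is exactly what permits the choice of a sufficiently small $\lambda$.
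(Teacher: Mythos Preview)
Your argument is correct and follows essentially the same route as the paper: choose $\epsilon_0$ via Lemma~\ref{lem 2.3} and the definition of the Bowen quantity, pick an ergodic near-optimizer of the variational expression, and perturb by convex combination with invariant weak$^*$ limit points of empirical measures coming from a point of $I_\varphi$, controlling the entropy loss by concavity of $\mu\mapsto h_\mu(f,\xi)$ and positivity of $\psi$. The only cosmetic difference is that the paper takes $\mu_1$ to be the ergodic near-optimizer itself and perturbs just once (with a single limit measure $\nu$) to obtain $\mu_2=t\mu_1+(1-t)\nu$, whereas you perturb symmetrically with two limit measures; your explicit bound $(1-\lambda)(S-\gamma/2)>S-\gamma$ is actually more detailed than the paper's one-line ``$t$ sufficiently close to $1$''.
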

\begin{proof}
    Firstly, we choose $\epsilon'>0$ sufficiently small such that for any $\epsilon\in(0,\epsilon')$, $\log{\frac{1}{5\epsilon}}>1$. Then, by Lemma \ref{lem 2.3}, Lemma \ref{lem 2.2} and definition of $\overline{\rm{mdim}}_M^B(f,I_{\varphi},d,\psi)$, we can choose $\epsilon_0=\epsilon_0(\gamma)\in(0,\epsilon')$ satisfying
\begin{align*}
    &S-\frac{\gamma}{2}\leq\frac{1}{\log{\frac{1}{5\epsilon_0}}}\sup_{\mu\in\mathcal{M}_f^e(X)}\left\{\inf_{diam\xi<5\epsilon_0}h_{\mu}(f,\xi)+\Big(\log{\frac{1}{5\epsilon_0}}\Big)\int \psi \mathrm{d}{\mu}\right\},\\
    &\sup_{\epsilon\in(0,5\epsilon_0)}\frac{M_{\epsilon}(f,I_{\varphi},d,\psi)}{\log{\frac{1}{\epsilon}}}\leq\overline{\rm{mdim}}_M^B(f,I_{\varphi},d,\psi)+\gamma.\\
\end{align*}
Choose $\mu_1\in\mathcal{M}_f^e(X)$ such that 
$$S-\frac{2\gamma}{3}\leq\frac{1}{\log{\frac{1}{5\epsilon_0}}}\inf_{diam\xi<5\epsilon_0}h_{\mu_1}(f,\xi)+\int \psi \mathrm{d}{\mu_1}.$$
Due to $I_{\varphi}\neq\emptyset$, there exists a ponit $x\in X$ such that
$$\lim\limits_{k\to\infty}\frac{1}{n_k}\sum_{i=0}^{n_k-1}\varphi(f^ix)=C\neq\int\varphi \mathrm{d}{\mu_1}.$$
Let $\nu_k:=\frac{1}{n_k}\sum_{i=0}^{n_k-1}\delta_{f^i(x)}\in\mathcal{M}(X),$ and  $\nu$ be some accumulation point of $\nu_k$, then $\nu\in\mathcal{M}_f(X)$ and $\int \varphi \mathrm{d}{\mu_1}\neq\int \varphi \mathrm{d}{\nu}$.
Choose $\mu_2=t\mu_1+(1-t)\nu$, where $t\in(0,1)$ is sufficiently close to 1 such that 
$$\frac{1}{\log{\frac{1}{5\epsilon_0}}}\inf_{diam\xi<5\epsilon_0}h_{\mu_2}(f,\xi)+\int \psi \mathrm{d}{\mu_2}>S-\gamma.$$
\end{proof}

In the following, we fix $\gamma,\ \epsilon_0>0$ and the $\mu_1,\mu_2$ be as Lemma \ref{lem 3.1}. Let $\alpha_i:=\int\varphi \mathrm{d}{\mu_i},$ for $i=1,2$.

\begin{rem}
    According to the construction of $\mu_2$, we can not guarantee that  $   \mu_2$ is   ergodic. For convenience, we construct an auxiliary measure to "appropriate" $\mu_2.$
\end{rem}
The following lemma is a generalized form of \cite[p.535]{ys90} and \cite[Lemma 7]{br23}.
\begin{lem}\label{lem 3.3}
	Suppose $\overline{\rm{mdim}}_M(f,X,d,\psi)<\infty$. Let $\mu\in\mathcal{M}_f(X)$, $\epsilon>0$, any $\delta>0$ and $\mathcal{U}$ be a finite open cover of $X$. There exists a measure $\nu\in\mathcal{M}_f(X)$ satisfying
	\begin{align*}
		&(1)\  \nu=\sum_{i=1}^{k}\lambda_i\nu_i,\  where\ \lambda_i>0,\ \sum_{i=1}^{k}\lambda_i=1\ and\ \nu_i\in\mathcal{M}_f^e(X);\\
		&(2)\  \inf_{\xi\succ\mathcal{U}}h_{\mu}(f,\xi)+\Big(\log{\frac{1}{\epsilon}}\Big)\int\psi \mathrm{d}{\mu}\leq\inf_{\xi\succ\mathcal{U}}h_{\nu}(f,\xi)+\Big(\log{\frac{1}{\epsilon}}\Big)\int\psi \mathrm{d}{\nu}+\delta;\\
		&(3)\  \left|\int\varphi \mathrm{d}{\nu}-\int\varphi \mathrm{d}{\mu}\right|<\delta.
	\end{align*}	
\end{lem}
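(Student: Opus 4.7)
The plan is to establish the lemma via the ergodic decomposition $\mu = \int_\Omega \mu_\omega\, d\mathbb{P}(\omega)$ combined with a Riemann-sum-style approximation in the parameter space $\Omega$. The underlying idea, going back to Young \cite{ys90}, is that averages $\int h\, d\mathbb{P}$ of a measurable function $h$ on $\Omega$ can be approximated by weighted samples $\sum_j \lambda_j h(\omega_j)$ associated to a partition $\{A_j\}$ of $\Omega$ with $\lambda_j := \mathbb{P}(A_j)$ and $\omega_j \in A_j$; by Jacobs' affineness of entropy this approximation passes to the entropy with respect to any fixed measurable partition.

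First I would pick a partition $\xi_0 \succ \mathcal{U}$ with $h_\mu(f,\xi_0) \le \inf_{\xi \succ \mathcal{U}} h_\mu(f,\xi) + \delta/4$, so that $\xi_0$ essentially realises the infimum on the left-hand side of (2). The hypothesis $\overline{\rm{mdim}}_M(f,X,d,\psi) < \infty$, via Lemma~\ref{lem 2.3}, provides a uniform $L^\infty(\mathbb{P})$-bound on the three measurable functions $g_1(\omega) := h_{\mu_\omega}(f,\xi_0)$, $g_2(\omega) := \int \psi\, d\mu_\omega$, and $g_3(\omega) := \int \varphi\, d\mu_\omega$. Applying Lusin's theorem to $(g_1,g_2,g_3)$, I would partition $\Omega$ into finitely many measurable sets $A_1,\dots,A_k$ with $\lambda_j := \mathbb{P}(A_j) > 0$, on each of which the oscillation of each $g_i$ is less than a small parameter $\delta'' \in (0,\delta)$ to be fixed later. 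Choosing representatives $\omega_j \in A_j$ with $\mu_{\omega_j}$ ergodic, set $\nu_j := \mu_{\omega_j}$ and $\nu := \sum_{j=1}^k \lambda_j \nu_j$.

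Property~(1) is built into the construction, while (3) follows from the oscillation bound on $g_3$, giving $|\int \varphi\, d\nu - \int \varphi\, d\mu| < \delta'' < \delta$. For (2), affineness of entropy for the fixed partition $\xi_0$ yields $h_\nu(f,\xi_0) = \sum_j \lambda_j g_1(\omega_j)$, which lies within $\delta''$ of $\int g_1\, d\mathbb{P} = h_\mu(f,\xi_0)$; similarly $\int \psi\, d\nu$ is within $\delta''$ of $\int \psi\, d\mu$. Combined with the near-optimality of $\xi_0$ for $\mu$, this bounds the left-hand side of (2) by $h_\nu(f,\xi_0) + (\log\tfrac{1}{\epsilon})\int \psi\, d\nu$ plus an error of order $\delta/4 + \delta''(1 + \log\tfrac{1}{\epsilon})$.

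The main obstacle is the last step: replacing $h_\nu(f,\xi_0)$ by $\inf_{\xi \succ \mathcal{U}} h_\nu(f,\xi)$ on the right at negligible cost, i.e.\ verifying that $\xi_0$ stays near-optimal for the constructed $\nu$. Heuristically this should hold because $\nu$ matches $\mu$ both on the $\xi_0$-entropy and on the averaged potential, so any partition competing in the infimum for $\nu$ competes nearly as well for $\mu$. Rigorously I expect to use either an upper semicontinuity argument for $\mu' \mapsto \inf_{\xi \succ \mathcal{U}} h_{\mu'}(f,\xi)$ powered by the uniform entropy bounds coming from $\overline{\rm{mdim}}_M(f,X,d,\psi) < \infty$, or to augment the Lusin step with a fourth measurable function $\omega \mapsto \inf_{\xi \succ \mathcal{U}} h_{\mu_\omega}(f,\xi)$ in order to track the competing partitions ergodic-component by ergodic-component. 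Once near-optimality of $\xi_0$ for $\nu$ is secured, choosing $\delta''$ small relative to $\delta/(1+\log(1/\epsilon))$ will complete (2).
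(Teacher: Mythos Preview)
Your overall strategy---ergodic decomposition plus a Riemann-sum approximation of the decomposition measure---is the right framework, and you correctly isolate the difficulty: after bounding the left side of (2) by $h_\nu(f,\xi_0)+(\log\tfrac{1}{\epsilon})\int\psi\,d\nu$ up to small error, passing from $h_\nu(f,\xi_0)$ down to $\inf_{\xi\succ\mathcal{U}}h_\nu(f,\xi)$ goes the wrong way. Your proposed fix (a) via upper semicontinuity of $\mu'\mapsto\inf_{\xi\succ\mathcal{U}}h_{\mu'}(f,\xi)$ is not available in this generality. Your fix (b)---tracking a fourth function $\omega\mapsto\inf_{\xi\succ\mathcal{U}}h_{\mu_\omega}(f,\xi)$---does work, but only after you invoke the integral formula
\[
\inf_{\xi\succ\mathcal{U}}h_\mu(f,\xi)=\int_{\mathcal{M}_f^e(X)}\inf_{\xi\succ\mathcal{U}}h_\tau(f,\xi)\,d\hat{\mu}(\tau)
\]
from \cite[Proposition~5]{hmry04}. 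This identity, not Jacobs' affineness for a fixed $\xi$, is the missing technical input; the non-obvious direction is $\le$, which is exactly what (2) requires.

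The paper's proof uses this same identity but otherwise takes a shorter route that sidesteps your obstacle entirely. Rather than partitioning the parameter space by level sets of entropy-type functions and choosing \emph{typical} representatives, the paper partitions $\mathcal{M}_f(X)$ only by small weak$^*$-diameter (this alone secures (3)), sets $\lambda_i=\hat{\mu}(P_i)$, and in each cell $P_i$ chooses $\nu_i\in P_i\cap\mathcal{M}_f^e(X)$ that is \emph{near-supremal} for $\tau\mapsto\inf_{\xi\succ\mathcal{U}}h_\tau(f,\xi)+(\log\tfrac{1}{\epsilon})\int\psi\,d\tau$; the hypothesis $\overline{\rm mdim}_M(f,X,d,\psi)<\infty$ is used precisely to guarantee this essential supremum is finite. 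Then the \cite{hmry04} identity, the near-supremal choice, and the elementary concavity $\sum_i\lambda_i\inf_{\xi\succ\mathcal{U}}h_{\nu_i}(f,\xi)\le\inf_{\xi\succ\mathcal{U}}h_\nu(f,\xi)$ give (2) directly---no auxiliary partition $\xi_0$, no Lusin step, and no need to verify that any particular $\xi$ remains near-optimal for $\nu$.
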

\begin{proof}
	It is well known that the week*-topology on $\mathcal{M}(X)$ is metrizable, and denote $d_*$ to be one of the compatible metrics. Let $\beta>0$ be sufficiently small such that for every $\tau_1,\tau_2\in\mathcal{M}_f(X)$, if $d_*(\tau_1,\tau_2)<\beta,$ then
	$$\left|\int\varphi\mathrm{d}{\tau_1}-\int\varphi \mathrm{d}{\tau_2}\right|<\delta.$$ 
	Let $\mathcal{P}=\{P_1,\dots,P_{k}\}$ be a partition of $\mathcal{M}_f(X)$ whose diameter with respect to $d_*$ is smaller than $\beta.$ By  Ergodic Decomposition Theorem \cite[Remark (2)]{wal00}, there exists a measure $\hat{\mu}$ on $\mathcal{M}_f(X)$ satisfying $\hat{\mu}(\mathcal{M}_f^e(X))=1$ and
	$$\mu=\int_{\mathcal{M}_f^e(X)}^{}\tau \mathrm{d}{\hat{\mu}(\tau)},$$ i.e.,
	$$\int{\psi(x)\mathrm{d}{\mu(x)}}=\int_{\mathcal{M}_f^e(X)}^{}\left(\int\psi(x)d{\tau(x)}\right)\mathrm{d}{\hat{\mu}(\tau)}\ \text{for every}\ \psi\in C(X,\mathbb{R}).$$
	Set $\lambda_i=\hat{\mu}(P_i)$. Since
	$$\sup_{\tau\in\mathcal{M}_f^e(X)}\left\{\inf_{\xi\succ\mathcal{U}}h_{\tau}(f,\xi)+\Big(\log{\frac{1}{\epsilon}}\Big)\int\psi \mathrm{d}{\tau}\right\}<\infty,$$
	there exist $\nu_i\in P_i\cap{\mathcal{M}_f^e}(X)$ such that 
	$$\inf_{\xi\succ\mathcal{U}}h_{\nu_i}(f,\xi)+\Big(\log{\frac{1}{\epsilon}}\Big)\int\psi \mathrm{d}{\nu_i}\ge\inf_{\xi\succ\mathcal{U}}h_{\tau}(f,\xi)+\Big(\log{\frac{1}{\epsilon}}\Big)\int\psi \mathrm{d}{\tau}-\delta$$ 
	for $\hat{\mu}-$almost every $\tau\in P_i\cap{\mathcal{M}_f^e}(X).$ 
Let $\nu:=\sum_{i=1}^{k}\lambda_i\nu_i$. Clearly, $\nu$ satisfies (1) and (3). For (2), by \cite[Proposition 5]{hmry04}, one has 
	$$\inf_{\xi\succ\mathcal{U}}h_{\mu}(f,\xi)=\int_{\mathcal{M}_f^e(X)}^{}\inf_{\xi\succ\mathcal{U}}h_{\tau}(f,\xi)\mathrm{d}{\hat{\mu}(\tau)}.$$
	Thus, we have 
	\begin{align*}
	&\inf_{\xi\succ\mathcal{U}}h_{\mu}(f,\xi)+\Big(\log{\frac{1}{\epsilon}}\Big)\int\psi \mathrm{d}{\mu}\\
	=&\sum_{i=1}^{k}\int_{P_i\cap\mathcal{M}_f^e(X)}^{}\left\{\inf_{\xi\succ\mathcal{U}}h_{\tau}(f,\xi)+\left(\int\psi(x)\mathrm{d}{\tau(x)}\right)\log{\frac{1}{\epsilon}}\right\}\mathrm{d}{\hat{\mu}(\tau)}\\
	\leq&\sum_{i=1}^{k}\lambda_i\left\{\inf_{\xi\succ\mathcal{U}}h_{\nu_i}(f,\xi)+\left(\int\psi(x)\mathrm{d}{\nu_i}\right)\log{\frac{1}{\epsilon}}\right\}+\delta\\
	\leq&\inf_{\xi\succ\mathcal{U}}h_{\nu}(f,\xi)+\Big(\log{\frac{1}{\epsilon}}\Big)\int\psi \mathrm{d}{\nu}+\delta.
\end{align*}
\end{proof}

\subsection{Construction of the Moran-like Fractal}
The construction of the Moran-like Fractal is a standard process that inspired by Thompson \cite{th10} and Backes \cite{br23}. By \cite[Lemma 6]{shi22}, there exists  a finite open cover $\mathcal{U}$ of $X$  that satisfies
    $$\text{diam}(\mathcal{U})\leq5\epsilon_0\ ,\ \text{Leb}(\mathcal{U})\ge\frac{5\epsilon_0}{4}.$$
	Fix $\delta\in(0,\gamma/2)$. By Lemma \ref{lem 3.3}, there exists $\nu\in\mathcal{M}_f(X)$ satisfying
\begin{align*}
    &(1)\  \nu=\sum_{i=1}^{k}\lambda_i\nu_i,\  where\ \lambda_i>0,\ \sum_{i=1}^{k}\lambda_i=1\ and\ \nu_i\in\mathcal{M}_f^e(X);\\
    &(2)\   \inf_{\xi\succ\mathcal{U}}h_{\mu_1}(f,\xi)+\Big(\log{\frac{1}{\epsilon}}\Big)\int\psi \mathrm{d}{\mu_1}\leq\inf_{\xi\succ\mathcal{U}}h_{\nu}(f,\xi)+\Big(\log{\frac{1}{\epsilon}}\Big)\int\psi \mathrm{d}{\nu}+\delta;\\
    &(3)\  \left|\int\varphi \mathrm{d}{\nu}-\int\varphi \mathrm{d}{\mu_1}\right|<\delta.
\end{align*} 
Since $\nu_i\in\mathcal{M}_f^e(X)$, there exist $N_0\in\mathbb{N}$ large enough such that the set
$$Y_i(N_0)=\left\{x\in X:\left|\frac{1}{n}S_n\varphi(x)-\int\varphi d{\nu_i}\right|<\delta,  \forall\ n\ge N_0\right\}$$
has $\nu_i-$measure at least $1-\gamma$ for every $i\in\{1,\dots,k\}.$

\begin{lem}\label{lem 3.4}
	For  $\epsilon_0$ and $\delta\in(0,1)$, we can find a large enough $\hat{n}$ and  $\mathcal{S}_{1,i}$ such that $\mathcal{S}_{1,i}$ is an $\left([\lambda_i\hat{n}],\frac{5\epsilon_0}{4}\right)$ separated set for $Y_i(N_0)$ and 
	\begin{align*}
	M_{1,i}:&=\sum_{x\in\mathcal{S}_{1,i}}^{}\exp(S_{[\lambda_i\hat{n}]}\psi(x)\cdot\log{\frac{1}{5\epsilon_0}})\\
	&\ge\exp\left\{[\lambda_i\hat{n}]\left(\inf_{\xi\succ\mathcal{U}}h_{\nu_i}(f,\xi)+\Big(\log\frac{1}{5\epsilon_0}\Big)\int\psi d{\nu_i}-2\gamma\right)\right\}.
	\end{align*}
	Furthermore,  $[\lambda_i\hat{n}]$ can be chosen such that $[\lambda_i\hat{n}]\ge N_0$ and $\hat{n}\ge 2^{m}$ where $m=m(\epsilon/16)$ is as in the definition of the specification property and $\epsilon$ will be determined later.
\end{lem}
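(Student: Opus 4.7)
The plan is to combine Proposition \ref{prop 2.1} (the measure-theoretic lower bound on the weighted spanning number $N_n^{\nu_i}(\psi,\cdot,\cdot)$) with the Katok-type Lemma \ref{lem 2.2} (relating $\overline{h}_{\nu_i}(f,\delta',\cdot)$ to the partition entropy along covers finer than $\mathcal{U}$) to obtain the required weighted lower bound, and then to realize it concretely by taking a maximal separated subset of $Y_i(N_0)$.

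First, I would fix $i\in\{1,\dots,k\}$ and apply Proposition \ref{prop 2.1} to the ergodic measure $\nu_i$ at the scale $\epsilon=5\epsilon_0/4$, so that $\log\frac{1}{4\epsilon}=\log\frac{1}{5\epsilon_0}$. Since $\diam(\mathcal{U})\le 5\epsilon_0$ and $\Leb(\mathcal{U})\ge 5\epsilon_0/4$, Lemma \ref{lem 2.2} gives $\inf_{\xi\succ\mathcal{U}}h_{\nu_i}(f,\xi)\le \overline{h}_{\nu_i}(f,\delta',5\epsilon_0/4)$ for every $\delta'\in(0,1)$. Combining these yields
$$\lim_{\delta'\to 0}\limsup_{n\to\infty}\frac{1}{n}\log N_n^{\nu_i}\Big(\psi,\tfrac{\delta'}{2},\tfrac{5\epsilon_0}{4}\Big) \ge \inf_{\xi\succ\mathcal{U}}h_{\nu_i}(f,\xi) + \Big(\log\tfrac{1}{5\epsilon_0}\Big)\int\psi\,\mathrm{d}{\nu_i}.$$
I would then pick $\delta^{*}>0$ (uniform in $i$) small enough and, using Birkhoff's ergodic theorem, enlarge $N_0$ so that $\nu_i(Y_i(N_0))\ge 1-\delta^{*}/2$ for every $i$; this gives, for each $i$,
$$\limsup_{n\to\infty}\frac{1}{n}\log N_n^{\nu_i}\Big(\psi,\tfrac{\delta^{*}}{2},\tfrac{5\epsilon_0}{4}\Big) \ge \inf_{\xi\succ\mathcal{U}}h_{\nu_i}(f,\xi) + \Big(\log\tfrac{1}{5\epsilon_0}\Big)\int\psi\,\mathrm{d}{\nu_i} - \gamma.$$

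Next, I would choose $\hat{n}$ large enough so that $\hat{n}\ge 2^{m}$, $[\lambda_i\hat{n}]\ge N_0$ for every $i$, and the above limsup is realized at the specific scales $n=[\lambda_i\hat{n}]$ up to a further error of $\gamma$, i.e.
$$N_{[\lambda_i\hat{n}]}^{\nu_i}\Big(\psi,\tfrac{\delta^{*}}{2},\tfrac{5\epsilon_0}{4}\Big) \ge \exp\Big\{[\lambda_i\hat{n}]\Big(\inf_{\xi\succ\mathcal{U}}h_{\nu_i}(f,\xi)+\Big(\log\tfrac{1}{5\epsilon_0}\Big)\int\psi\,\mathrm{d}{\nu_i} - 2\gamma\Big)\Big\}.$$
I would then define $\mathcal{S}_{1,i}\subset Y_i(N_0)$ to be a maximal $([\lambda_i\hat{n}],5\epsilon_0/4)$-separated set. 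By maximality, $\mathcal{S}_{1,i}$ also $([\lambda_i\hat{n}],5\epsilon_0/4)$-spans $Y_i(N_0)$; since $\nu_i(Y_i(N_0))\ge 1-\delta^{*}/2$, the pair $(\mathcal{S}_{1,i},Y_i(N_0))$ is admissible in the infimum defining $N_{[\lambda_i\hat{n}]}^{\nu_i}(\psi,\delta^{*}/2,5\epsilon_0/4)$, so
$$M_{1,i}=\sum_{x\in\mathcal{S}_{1,i}}\exp\Big(S_{[\lambda_i\hat{n}]}\psi(x)\cdot\log\tfrac{1}{5\epsilon_0}\Big) \ge N_{[\lambda_i\hat{n}]}^{\nu_i}\Big(\psi,\tfrac{\delta^{*}}{2},\tfrac{5\epsilon_0}{4}\Big),$$
and the desired estimate follows.

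The main obstacle I anticipate is the simultaneous selection of a single $\hat{n}$ such that the limsup inequality is achieved at all $k$ scales $n=[\lambda_i\hat{n}]$ at once. This is a routine diagonalization: for each $i$ the set of indices realizing the limsup up to error $\gamma$ is infinite, so taking $\hat{n}$ large in the intersection of these "good" sets (accessible because each $\lambda_i>0$) gives the common $\hat{n}$, without affecting the qualitative bound.
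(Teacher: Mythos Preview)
Your overall approach mirrors the paper's: apply Proposition~\ref{prop 2.1} at scale $5\epsilon_0/4$, invoke the Katok-type Lemma~\ref{lem 2.2} via the cover $\mathcal{U}$ to replace the Katok entropy by $\inf_{\xi\succ\mathcal{U}}h_{\nu_i}(f,\xi)$, and then realize the bound by a maximal separated subset of $Y_i(N_0)$ (which is automatically spanning). This is exactly the skeleton of the paper's argument.

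There is, however, a genuine gap in your final step. You invoke the \emph{limsup} conclusion of Proposition~\ref{prop 2.1}, so for each $i$ the set $G_i$ of indices $n$ realizing the bound up to $\gamma$ is merely infinite, not cofinite. You then need a single $\hat n$ with $[\lambda_i\hat n]\in G_i$ for every $i\in\{1,\dots,k\}$ simultaneously. Your claim that this is ``routine diagonalization'' via the intersection of the good sets is not justified: the sets $H_i=\{\hat n:[\lambda_i\hat n]\in G_i\}$ are each infinite (even of positive density), but an intersection of finitely many infinite sets can be empty, so there is no guarantee such a common $\hat n$ exists. The paper avoids this entirely by using the \emph{liminf} conclusion of Proposition~\ref{prop 2.1} together with the $\underline{h}$ inequality in Lemma~\ref{lem 2.2}. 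With the liminf, each $G_i$ is cofinite, hence so is each $H_i$ and their intersection; any sufficiently large $\hat n$ then works for all $i$ at once, and the additional constraints $[\lambda_i\hat n]\ge N_0$ and $\hat n\ge 2^m$ are automatically met. Replacing your limsup by liminf (which Proposition~\ref{prop 2.1} equally provides) repairs the argument with no other changes.
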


\begin{proof}
	By Proposition \ref{prop 2.1}, we have  
	\begin{align*}
	\liminf_{n\to\infty}\frac{1}{n}\log{N_n^{\nu_i}}\left(\psi,\delta,\frac{5\epsilon_0}{4}\right)&\ge \underline{h}_{\nu_i}\left(f,\delta,\frac{5\epsilon_0}{4}\right)+\Big(\log\frac{1}{5\epsilon_0}\Big)\int\psi d{\nu_i}-\frac{\gamma}{2}\\
&\ge \inf_{\xi\succ\mathcal{U}}h_{\nu_i}(f,\xi)+\Big(\log\frac{1}{5\epsilon_0}\Big)\int\psi d{\nu_i}-\frac{\gamma}{2}.
\end{align*}
Since $Q_n(Z,\psi,\epsilon)\leq P_n(Z,\psi,\epsilon)$ and $\nu_i(Y_i(N_0))>1-\gamma$, immediately, we have that
$$Q_n\left(Y_i(N_0),\psi,\frac{5\epsilon_0}{4}\right)\ge N_n^{\nu_i}\left(\psi,\delta,\frac{5\epsilon_0}{4}\right).$$
Let $M(1,n)=P_n(Y_i(N_0),\psi,\frac{5\epsilon_0}{4})$. Then we obtain 
\begin{align*}
   \liminf_{n\to\infty}\frac{1}{n}\log{M(1,n)}&\ge\liminf_{n\to\infty}\frac{1}{n}\log{N_n^{\nu_i}\left(\psi,\delta,\frac{5\epsilon_0}{4}\right)}\\
   &\ge \underline{h}_{\nu_i}\left(f,\frac{5\epsilon_0}{4},\delta\right)+\Big(\log\frac{1}{5\epsilon_0}\Big)\int\psi d{\nu_i}-\frac{\gamma}{2}.
\end{align*}
Thus, we can choose a sequence $[\lambda_i\hat{n}]\to\infty$ as $\hat{n}\to\infty$ so that 
$$\frac{1}{[\lambda_i\hat{n}]}\log{M(1,n)}\ge\underline{h}_{\nu_i}\left(f,\frac{5\epsilon_0}{4},\delta\right)+\Big(\log\frac{1}{5\epsilon_0}\Big)\int\psi d{\nu_i}-\gamma.$$
Taking $\mathcal{S}_i$ be a choice of $\left([\lambda_i\hat{n}],\frac{5\epsilon_0}{4}\right)$ separated set for $Y_i(N_0)$ that satisfies
$$\frac{1}{[\lambda_i\hat{n}]}\log{\sum_{x\in\mathcal{S}_i}^{}\exp\left(S_{[\lambda_i\hat{n}]}(x)\log{\frac{1}{5\epsilon_0}}\right)}\ge \frac{1}{[\lambda_i\hat{n}]}\log{M(1,n)}-\gamma.$$
Then, we have that
\begin{align*}
	&\frac{1}{[\lambda_i\hat{n}]}\log{M_{1,i}}\ge \underline{h}_{\nu_i}\left(f,\frac{5\epsilon_0}{4},\delta\right)+\Big(\log\frac{1}{5\epsilon_0}\Big)\int\psi d{\nu_i}-2\gamma,\\
	&M_{1,i}\ge\exp\left\{[\lambda_i\hat{n}]\left(\inf_{\xi\succ\mathcal{U}}h_{\nu_i}(f,\xi)+\Big(\log\frac{1}{5\epsilon_0}\Big)\int\psi d{\nu_i^k}-2\gamma\right)\right\}.
\end{align*}
\end{proof}

We choose $\epsilon\in(0,\epsilon_0)$ satisfying $Var(\psi,\epsilon)<\gamma$, and fix all the ingredients provided by Lemma \ref{lem 3.4}. For every $y_i\in \mathcal{S}_{i}$, by the specification property, there exists $x=x(y_1,\dots,y_{k})\in X$ that satisfies
$$d_{[\lambda_l\hat{n}]}(y_l,f^{a_l}x)<\frac{\epsilon}{16}$$
for $l\in\{1,\dots,k\}$, where $a_1=0$ and $a_l=\sum_{i=1}^{l-1}[\lambda_i\hat{n}]+(l-1)m$ for $l\in\{2,\dots,k\}.$
Let $\mathcal{S}_1$ be the set that consist of such $x=x(y_1,\dots,y_k)$. Define $n_1=\sum_{i=1}^{k}[\lambda_i\hat{n}]+(k-1)m$, and we have that $\frac{n_1}{\hat{n}}\to1$ as $\hat{n}\to\infty$. 
We claim that $\mathcal{S}_1$ is a $(n_1,9\epsilon_0/8)$-separated set and if $(y_1,\dots,y_k)\neq(y_1',\dots,y_k')$ then $x=x(y_1,\dots,y_k)\neq x'=x'(y_1',\dots,y_k')$. Actually, if $y_l\neq y_l'$ for some $l\in \{1,\dots,k\}$, we have
\begin{align*}
	\frac{5\epsilon_0}{4}&<d_{[\lambda_l\hat{n}]}(y_l,y_l')\\
	&\leq d_{[\lambda_l\hat{n}]}(y_l,f^{a_l}x)+d_{[\lambda_l\hat{n}]}(y_l',f^{a_l}x')+d_{[\lambda_l\hat{n}]}(f^{a_l}x,f^{a_l}x')\\
	&<2\frac{\epsilon}{16}+d_{[\lambda_l\hat{n}_k]}(f^{a_l}x,f^{a_l}x').
\end{align*}
Thus,
$$d_{n_k}(x,x')\ge d_{[\lambda_l\hat{n}]}(f^{a_l}x,f^{a_l}x')>\frac{5\epsilon_0}{4}-\frac{\epsilon}{8}\ge\frac{9\epsilon_0}{8}.$$
We define $M_1:=M_{1,1}\cdots M_{1,k}.$

\begin{lem}\label{lem 3.5}
	For $\hat{n}$ sufficiently large such that 
	$$k\leq\frac{\delta\hat{n}}{4\|\varphi\|+1}\ \text{and}\  \frac{2(k-1)m\|\varphi\|}{n}<\frac{\delta}{2},$$
	we have 
	\begin{align*}
		&(1)\ M_1\ge\exp\left\{n_1(\inf_{diam\xi<5\epsilon_0}h_{\mu_1}(f,\xi)+\Big(\log{\frac{1}{5\epsilon_0}}\Big)\int\psi d{\mu_1}-5\gamma)\right\},\\
		&(2)\ if\  x\in\mathcal{S}_1,\ \left|\frac{1}{n_1}S_{n_1}\varphi(x)-\alpha_1\right|< 4\delta.
	\end{align*}
\end{lem}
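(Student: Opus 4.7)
The plan is to prove (1) and (2) separately, using the ergodic decomposition $\nu=\sum_{i=1}^k\lambda_i\nu_i$ from Lemma~\ref{lem 3.3} as the conceptual bridge between the $\nu_i$-level estimates of Lemma~\ref{lem 3.4} and the $\mu_1$-level quantities appearing on the right-hand side of (1) and in the definition of $\alpha_1=\int\varphi\,d\mu_1$.

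For (1), I would take logarithms in $M_1=\prod_{i=1}^k M_{1,i}$ and apply Lemma~\ref{lem 3.4} to each factor, obtaining
$$\log M_1\ge\sum_{i=1}^k[\lambda_i\hat n]\Bigl(\inf_{\xi\succ\mathcal{U}}h_{\nu_i}(f,\xi)+\Bigl(\log\tfrac{1}{5\epsilon_0}\Bigr)\int\psi\,d\nu_i\Bigr)-2\gamma\sum_{i=1}^k[\lambda_i\hat n].$$
Swapping $[\lambda_i\hat n]$ for $\lambda_i\hat n$ costs $O(k)$, the bracket being uniformly bounded thanks to $\overline{\mathrm{mdim}}_M(f,X,d,\psi)<\infty$. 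Using the affinity identity $\inf_{\xi\succ\mathcal{U}}h_\nu(f,\xi)=\sum_i\lambda_i\inf_{\xi\succ\mathcal{U}}h_{\nu_i}(f,\xi)$ already invoked inside Lemma~\ref{lem 3.3} and linearity of $\tau\mapsto\int\psi\,d\tau$, the sum collapses to $\hat n\bigl(\inf_{\xi\succ\mathcal{U}}h_\nu(f,\xi)+(\log\tfrac{1}{5\epsilon_0})\int\psi\,d\nu\bigr)$. Property~(2) of Lemma~\ref{lem 3.3}, applied with $\epsilon=5\epsilon_0$, converts this expression into the corresponding $\mu_1$-bracket up to an additive $\delta$; and since $\mathrm{diam}\,\mathcal{U}\le 5\epsilon_0$, every refinement $\xi\succ\mathcal{U}$ has diameter at most $5\epsilon_0$, giving (after a harmless shrinking of the diameter threshold) the inequality $\inf_{\mathrm{diam}\,\xi<5\epsilon_0}h_{\mu_1}(f,\xi)\le\inf_{\xi\succ\mathcal{U}}h_{\mu_1}(f,\xi)$. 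Finally I trade $\hat n$ for $n_1=\sum_i[\lambda_i\hat n]+(k-1)m$ at a further $O((k-1)m)$ cost; since $\delta<\gamma/2$ and $\hat n$ is large, the accumulated losses $2\gamma$, $\delta$, $O(k/\hat n)$ and $O((k-1)m/n_1)$ are all absorbed into $5\gamma n_1$.

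For (2), I split
$$S_{n_1}\varphi(x)=\sum_{l=1}^k S_{[\lambda_l\hat n]}\varphi(f^{a_l}x)+R,\qquad |R|\le(k-1)m\|\varphi\|,$$
where $R$ collects the contributions from the $k-1$ specification gaps of length $m$. Inside the $l$-th main block, the shadowing bound $d_{[\lambda_l\hat n]}(y_l,f^{a_l}x)<\epsilon/16$ together with the (harmless) extra requirement $\mathrm{Var}(\varphi,\epsilon)<\delta$, imposed simultaneously with the existing $\mathrm{Var}(\psi,\epsilon)<\gamma$, yields $|S_{[\lambda_l\hat n]}\varphi(f^{a_l}x)-S_{[\lambda_l\hat n]}\varphi(y_l)|\le\delta[\lambda_l\hat n]$; and $y_l\in Y_l(N_0)$ with $[\lambda_l\hat n]\ge N_0$ provides $|S_{[\lambda_l\hat n]}\varphi(y_l)-[\lambda_l\hat n]\int\varphi\,d\nu_l|<\delta[\lambda_l\hat n]$. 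Summing over $l$, replacing $\sum_l[\lambda_l\hat n]\int\varphi\,d\nu_l$ by $\hat n\int\varphi\,d\nu$ at a floor-error cost of $k\|\varphi\|$, and invoking Lemma~\ref{lem 3.3}(3) to exchange $\int\varphi\,d\nu$ for $\alpha_1$ up to $\delta$, I then divide through by $n_1$. The hypothesis $k\le\delta\hat n/(4\|\varphi\|+1)$ forces the floor error below $\delta/4$ and $2(k-1)m\|\varphi\|/n<\delta/2$ bounds the gap term by $\delta/4$, yielding a total below $\delta/4+2\delta+\delta+\delta/4<4\delta$.

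The main difficulty is bookkeeping rather than conceptual: three independent sources of loss — the floor-function rounding $[\lambda_i\hat n]-\lambda_i\hat n$, the $(k-1)m$ specification gaps, and the finite-time ergodic discrepancy $\delta$ built into the sets $Y_i(N_0)$ — have to be driven below a common threshold simultaneously, which is precisely what the two quantitative assumptions on $\hat n$ and $k$ are designed to achieve. A smaller technicality worth flagging is that Lemma~\ref{lem 3.4} works with $\inf_{\xi\succ\mathcal{U}}$ while the conclusion of (1) uses $\inf_{\mathrm{diam}\,\xi<5\epsilon_0}$; the passage between the two is legitimate because $\mathrm{diam}(\mathcal{U})\le 5\epsilon_0$ and every refinement of $\mathcal{U}$ inherits this diameter bound.
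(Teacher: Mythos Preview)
Your proposal is correct and follows essentially the same route as the paper: for (1) you multiply the per-factor bounds from Lemma~\ref{lem 3.4}, pass from $[\lambda_i\hat n]$ to $\lambda_i\hat n$, collapse via affinity of $\tau\mapsto\inf_{\xi\succ\mathcal U}h_\tau(f,\xi)$ and linearity of the integral, apply Lemma~\ref{lem 3.3}(2), and finally trade $\hat n$ for $n_1$ and $\inf_{\xi\succ\mathcal U}$ for $\inf_{\mathrm{diam}\,\xi<5\epsilon_0}$; for (2) you use the same block-plus-gap decomposition, the same three error sources (shadowing, ergodic approximation, floor rounding), and Lemma~\ref{lem 3.3}(3) to move from $\nu$ to $\mu_1$. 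Two minor remarks: your gap bound $|R|\le (k-1)m\|\varphi\|$ omits the matching $(k-1)m|\alpha_1|$ contribution from $n_1\alpha_1$, so the combined gap term is really $2(k-1)m\|\varphi\|$ (still $<\delta/2$ by hypothesis, leaving the total comfortably below $4\delta$); and your observation that one must also impose $\mathrm{Var}(\varphi,\epsilon)<\delta$ is well taken --- the paper uses this bound in the proof without having stated it when $\epsilon$ was chosen.
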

\begin{proof}
	(1): For sufficiently large $\hat{n}$, according to Lemma \ref{lem 3.4}, we have that
	\begin{align*}
		M_1&\ge\exp\left\{\sum_{i=1}^{k}[\lambda_i\hat{n}]\left(\inf_{\xi\succ\mathcal{U}}h_{\nu_i}(f,\xi)+\Big(\log{\frac{1}{5\epsilon_0}}\Big)\int\psi \mathrm{d}{\nu_i}-2\gamma\right)\right\}\\
		&=\exp\left\{\sum_{i=1}^{k}\frac{[\lambda_i\hat{n}]}{\lambda_i}\left(\lambda_i\inf_{\xi\succ\mathcal{U}}h_{\nu_i}(f,\xi)+\lambda_i\Big(\log{\frac{1}{5\epsilon_0}}\Big)\int\psi \mathrm{d}{\nu_i}-2\lambda_i\gamma\right)\right\}\\
		&\ge\exp\left\{\sum_{i=1}^{k}\hat{n}\left(\lambda_i\inf_{\xi\succ\mathcal{U}}h_{\nu_i}(f,\xi)+\lambda_i\Big(\log{\frac{1}{5\epsilon_0}}\Big)\int\psi \mathrm{d}{\nu_i}-3\lambda_i\gamma\right)\right\}\\
		&\ge\exp\left\{\hat{n}\left(\inf_{\xi\succ\mathcal{U}}h_{\mu_1}(f,\xi)+\Big(\log{\frac{1}{5\epsilon_0}}\Big)\int\psi \mathrm{d}{\mu}-3\gamma-\delta\right)\right\}\\
		&\ge\exp\left\{\hat{n}\left(\inf_{\xi\succ\mathcal{U}}h_{\mu_1}(f,\xi)+\Big(\log{\frac{1}{5\epsilon_0}}\Big)\int\psi \mathrm{d}{\mu}-\frac{7}{2}\gamma\right)\right\}\\
		&=\exp\left\{\frac{\hat{n}}{n_1}\left(n_1\left(\inf_{\xi\succ\mathcal{U}}h_{\mu}(f,\xi)+\Big(\log{\frac{1}{5\epsilon_0}}\Big)\int\psi \mathrm{d}{\mu}-\frac{7}{2}\gamma\right)\right)\right\}\\
		&\ge\exp\left\{{n_1}\left(\inf_{\xi\succ\mathcal{U}}h_{\mu_1}(f,\xi)+\Big(\log{\frac{1}{5\epsilon_0}}\Big)\int\psi \mathrm{d}{\mu_1}-4\gamma\right)\right\}\\
		&\ge\exp\left\{{n_1}\left(\inf_{diam\xi<5\epsilon_0}h_{\mu_1}(f,\xi)+\Big(\log{\frac{1}{5\epsilon_0}}\Big)\int\psi \mathrm{d}{\mu_1}-4\gamma\right)\right\}.\\
	\end{align*}
	\begin{align*}
		(2):&\left|S_{n_1}\varphi(x)-n_1\alpha_1\right|\\
		&\leq\sum_{i=1}^{k}\left(\left|S_{[\lambda_i\hat{n}]}\varphi(f^{a_i}x)-S_{[\lambda_i\hat{n}]}\varphi(y_i)\right|+\left|S_{[\lambda_i\hat{n}]}\varphi(y_i)-[\lambda_i\hat{n}]\int\varphi \mathrm{d}{\nu_i}\right|\right)\\
		&+\left|\sum_{i=1}^{k}\left([\lambda_i\hat{n}]\int\varphi \mathrm{d}{\nu_i}\right)-\left(\sum_{i=1}^{k}[\lambda_i\hat{n}]\right)\alpha_1\right|+2(k-1)m\|\varphi\|,\\
	\end{align*}
	where we use the fact that $|\alpha_1|\leq\|\varphi\|.$ We have 
	\begin{align*}
		&\sum_{i=1}^{k}\left|S_{[\lambda_i\hat{n}]}\varphi(f^{a_i}x)-S_{[\lambda_i\hat{n}]}\varphi(y_i)\right|\leq\sum_{i=1}^{k}[\lambda_i\hat{n}]Var(\varphi,\frac{\epsilon}{16})\leq\sum_{i=1}^{k}[\lambda_i\hat{n}]\delta,\\
		&\sum_{i=1}^{k}\left|S_{[\lambda_i\hat{n}]}\varphi(y_i)-[\lambda_i\hat{n}]\int\varphi \mathrm{d}{\nu_i}\right|\leq\sum_{i=1}^{k}[\lambda_i\hat{n}]\delta,\\
	\end{align*}
	\begin{align*}
		&\left|\sum_{i=1}^{k}\left([\lambda_i\hat{n}]\int\varphi \mathrm{d}{\nu_i}\right)-\left(\sum_{i=1}^{k}[\lambda_i\hat{n}]\right)\alpha\right|\\
		&\leq\left|\hat{n}\int\varphi\mathrm{d}{\nu}-\hat{n}\alpha_1\right|+2(\hat{n}-\sum_{i=1}^{k}[\lambda_i\hat{n}])\|\varphi\|\leq \hat{n}\delta+2k\|\varphi\|\\
		&\leq \hat{n}\delta+\frac{\delta\hat{n}}{2}\leq \frac{3\delta\hat{n}}{2}.
	\end{align*}
	Thus, 
	\begin{align*}
		&2\sum_{i=1}^{k}[\lambda_i\hat{n}]\delta+2(k-1)m\|\varphi\|+\frac{3\delta\hat{n}}{2}\\
&\leq n_1(2\delta+\frac{3\delta}{2})+\frac{\delta n_1}{2}=4n_1\delta.
	\end{align*}
	Combining these inequalities , we have that $\left|\frac{1}{n_1}S_{n_1}\varphi(x)-\alpha_1\right|< 4\delta.$
\end{proof}

Now, we begin to construct the Moran-like fractal. Let $\rho:\mathbb{N}\to\{1,2\}$ and $\rho(k)=(k+1(\text{mod}\ 2))+1.$ Then let $\{\delta_k\}_{k=1}^{\infty}$ be a strictly decreasing sequence with the property that $\delta_k\to0$ as $k\to\infty$ and $\delta_1<\gamma/2.$
Let $m_k=m(\epsilon_0/2^{k+5})$ be the gap in the definition of specification property. Then, for each $k\in\mathbb{N}$, resembling the Lemma \ref{lem 3.5}, exists $n_k>2^{m_k}$  and the set $\mathcal{S}_k$ such that 
$$M_k\ge\exp\left\{n_k(\inf_{diam\xi<5\epsilon_0}h_{\mu}(f,\xi)+\Big(\log{\frac{1}{5\epsilon_0}}\Big)\int\psi\mathrm{d}{\mu_{\rho(k)}}-5\gamma)\right\}$$
and for $x\in\mathcal{S}_k$
$$\left|\frac{1}{n_k}S_{n_k}\varphi(x)-\alpha_{\rho(k)}\right|< 4\delta_k.$$
We choose a sequence of positive integers $\{N_k\}_{k\in\mathbb{N}}$ such that $N_1=1$ and
$$\lim\limits_{k\to\infty}\frac{n_{k+1}+m_{k+1}}{N_k}=0,\ \lim\limits_{k\to\infty}\frac{N_1(n_1+m_1)+\dots+N_k(n_{k+1}+m_{k+1})}{N_{k+1}}=0.$$

$\mathbf{Step\ 1.}$ Constructions of intermediate sets $\{\mathcal{C}_k\}_{k=1}^{\infty}.$

For every $k\in\N$ and $\mathcal{S}_k:=\{x_i^k:i=1,\dots,\#\mathcal{S}_k\}$, we consider $\underline{i}=(i_1,\dots,i_{N_k})\in\{1,\dots,\#\mathcal{S}_k\}^{N_k}$. By the specification property, there exists a point $y:=y(i_1,\dots,i_{N_k})$ which satisfies
$$d_{n_k}(x_{i_j}^k,f^{a_j}y)<\frac{\epsilon_0}{2^{k+5}},\ \text{for}\ j\in\{1,\dots,N_k\},\ a_j=(j-1)(n_k+m_k).$$
We collect all the shadowing points into
$$\mathcal{C}_k=\{y(i_1,\dots,i_{N_k})\in X:(i_1,\dots,i_{N_k})\in\{1,\dots,\#\mathcal{S}_k\}^{N_k}\}.$$
Denote the amount of time for which the orbit of points in $\mathcal{C}_k$ has been shadowed by $c_k=N_kn_k+(N_k-1)m_k$ and we have the following lemma.
\begin{lem}\label{lem 3.6}
	Let $\underline{i},\ \underline{j}$ be the different words in $\{1,\dots,\#\mathcal{S}_k\}^{N_k}$. Then $y_1:=y(\underline{i})$ and $y_2:=y(\underline{j})$ are $(c_k,\frac{17\epsilon_0}{16})$-separated points, i.e. $d_{c_k}(y_1,y_2)>{\frac{17\epsilon_0}{16}}$. As a consequence, $\#\mathcal{C}_k=(\#\mathcal{S}_k)^{N_k}.$
\end{lem}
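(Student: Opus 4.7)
The plan is to compare the two shadowing points on a single block where their labels differ, using the separation of the underlying set $\mathcal{S}_k$ together with the shadowing accuracy $\epsilon_0/2^{k+5}$, and then conclude injectivity of the coding for free.

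First, I would note that although the statement of Lemma \ref{lem 3.5} is phrased for $\mathcal{S}_1$, the construction that produces it applies verbatim at every level $k$; in particular each $\mathcal{S}_k$ is an $(n_k, 9\epsilon_0/8)$-separated set (this is exactly the computation giving the $9\epsilon_0/8$ constant in the construction of $\mathcal{S}_1$). So, given different words $\underline{i} = (i_1,\dots,i_{N_k})$ and $\underline{j} = (j_1,\dots,j_{N_k})$, I would pick an index $l$ with $i_l \neq j_l$ and use the separation $d_{n_k}(x_{i_l}^k, x_{j_l}^k) > 9\epsilon_0/8$ at the block starting at time $a_l = (l-1)(n_k+m_k)$.

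Next, I would apply the triangle inequality in the Bowen metric $d_{n_k}$ on the iterates $f^{a_l}y_1$ and $f^{a_l}y_2$:
\begin{align*}
d_{n_k}(x_{i_l}^k, x_{j_l}^k) \leq d_{n_k}(x_{i_l}^k, f^{a_l}y_1) + d_{n_k}(f^{a_l}y_1, f^{a_l}y_2) + d_{n_k}(f^{a_l}y_2, x_{j_l}^k).
\end{align*}
The first and third terms are each strictly less than $\epsilon_0/2^{k+5} \leq \epsilon_0/64$ by the shadowing choice. Rearranging and using $9\epsilon_0/8 = 36\epsilon_0/32$ and $2\cdot\epsilon_0/64 = \epsilon_0/32$ yields $d_{n_k}(f^{a_l}y_1, f^{a_l}y_2) > 35\epsilon_0/32 > 34\epsilon_0/32 = 17\epsilon_0/16$. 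Since $a_l + n_k \leq N_k n_k + (N_k-1)m_k = c_k$, the block $\{a_l, a_l+1,\dots,a_l+n_k-1\}$ is contained in $\{0,1,\dots,c_k-1\}$, so $d_{c_k}(y_1,y_2) \geq d_{n_k}(f^{a_l}y_1, f^{a_l}y_2) > 17\epsilon_0/16$, which is the required separation.

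For the consequence, since distinct words in $\{1,\dots,\#\mathcal{S}_k\}^{N_k}$ yield distinct shadowing points (they are even $(c_k,17\epsilon_0/16)$-separated), the coding map $\underline{i}\mapsto y(\underline{i})$ is injective, and therefore $\#\mathcal{C}_k = (\#\mathcal{S}_k)^{N_k}$. The only subtle point, and the step I would be most careful with, is the bookkeeping that $\mathcal{S}_k$ inherits the $9\epsilon_0/8$ separation from the construction of $\mathcal{S}_1$, together with the numerical check that the losses $2\epsilon_0/2^{k+5}$ from shadowing do not eat past the required $17\epsilon_0/16$ — everything else is routine triangle inequality.
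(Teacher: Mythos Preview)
Your proof is correct and follows essentially the same approach as the paper: pick a coordinate $l$ where the words differ, apply the triangle inequality in the Bowen metric $d_{n_k}$ at the block starting at $a_l$, and subtract the two shadowing errors $\epsilon_0/2^{k+5}$ from the $9\epsilon_0/8$ separation of $\mathcal{S}_k$. The paper records the cruder bound $9\epsilon_0/8 - 2\epsilon_0/2^{k+5}\ge 17\epsilon_0/16$ directly, while you sharpen it to $35\epsilon_0/32$, but the argument is identical and the consequence about $\#\mathcal{C}_k$ is drawn in the same way.
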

\begin{proof}
	Since $\underline{i}\neq\underline{j}$, there exists $l\in\{1,\dots,N_k\}$ such that $i_l\neq j_l.$ We have 
	\begin{align*}
		d_{c_k}(y_1,y_2)&\ge d_{n_k}(f^{a_l}y_1,f^{a_l}y_2)\\
&\ge d_{n_k}(x_{i_l}^k,x_{j_l}^k)-d_{n_k}(f^{a_l}y_1,x_{i_l}^k)-d_{n_k}(f^{a_l}y_2,x_{j_l}^k)\\
&>\frac{9\epsilon_0}{8}-\frac{\epsilon_0}{2^{(k+5)}}-\frac{\epsilon_0}{2^{(k+5)}}\\
&\ge\frac{17\epsilon_0}{16}.
	\end{align*}
\end{proof}

$\mathbf{Step\ 2.}$ Constructions of $\{\mathcal{T}_k\}_{k=1}^{\infty}$, the $k$-th level of the Moran-like fractal.

We define $\mathcal{T}_k$ inductively. Let $\mathcal{T}_1=\mathcal{C}_1$ and $t_1=c_1$. We construct $\mathcal{T}_{k+1}$ from $\mathcal{T}_k$ as follows. Let $t_{k+1}:=t_k+m_{k+1}+c_{k+1}$ and $x\in\mathcal{T}_k,\ y\in\mathcal{C}_{k+1}$. By the specification property, we can find a point $z:=z(x,y)$ that satisfies
$$d_{t_k}(x,z)<\frac{\epsilon_0}{2^{(k+6)}}\ and\ d_{c_{k+1}}(y,f^{t_k+m_{k+1}}z)<\frac{\epsilon_0}{2^{(k+6)}}.$$ 
Define $\mathcal{T}_{k+1}=\{z(x,y):x\in\mathcal{T}_k,\ y\in\mathcal{C}_{k+1}\}$, and note that $t_{k+1}$ is the amount of time for which the orbits of points in $\mathcal{T}_k$ has been shadowed. Similarly, we have the following lemma.
\begin{lem}\label{lem 3.7}
	For every $x\in\mathcal{T}_k$ and distinct points $y_1,y_2\in\mathcal{C}_{k+1}$,
	$$d_{t_k}(z(x,y_1),z(x,y_2))<\frac{\epsilon_0}{2^{k+5}},\ d_{t_{k+1}}(z(x,y_1),z(x,y_2))>\frac{33\epsilon_0}{32}.$$
	Thus, $\mathcal{T}_k$ is a $\left(t_k,\frac{33\epsilon_0}{32}\right)$-separated set. In particular, if $z_1,z_2\in\mathcal{T}_k$, then 
	$$\overline{B}_{t_k}\left(z_1,\frac{\epsilon_0}{2^{k+5}}\right)\cap\overline{B}_{t_k}\left(z_2,\frac{\epsilon_0}{2^{k+5}}\right)=\emptyset.$$
\end{lem}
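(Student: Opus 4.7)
The plan is to split the lemma into two independent tasks: first establish the two displayed inequalities from the definition of $z(x,y)$ via triangle inequalities, then promote them into the separation property of $\mathcal{T}_k$ by induction on $k$.

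For the first inequality, both $z(x,y_1)$ and $z(x,y_2)$ are, by the construction of $\mathcal{T}_{k+1}$, $(t_k,\epsilon_0/2^{k+6})$-close to the common base point $x$; the triangle inequality in the metric $d_{t_k}$ then gives $d_{t_k}(z(x,y_1),z(x,y_2)) < 2\cdot\epsilon_0/2^{k+6} = \epsilon_0/2^{k+5}$. For the second inequality, I would use Lemma \ref{lem 3.6} to get $d_{c_{k+1}}(y_1,y_2)>17\epsilon_0/16$, and combine it with the shadowing relation $d_{c_{k+1}}(y_i,f^{t_k+m_{k+1}}z(x,y_i))<\epsilon_0/2^{k+6}$ via the triangle inequality to obtain
$$d_{c_{k+1}}\bigl(f^{t_k+m_{k+1}}z(x,y_1),\,f^{t_k+m_{k+1}}z(x,y_2)\bigr) > \frac{17\epsilon_0}{16} - \frac{\epsilon_0}{2^{k+5}}.$$
Since this window lies inside $[0,t_{k+1})$, the same lower bound holds for $d_{t_{k+1}}(z_1,z_2)$, and a short arithmetic check ($17/16 - 1/2^{k+5} > 33/32$ for $k\geq 1$) gives the required $>33\epsilon_0/32$.

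The main obstacle is the separation of $\mathcal{T}_k$ itself. A naive induction assuming ``$\mathcal{T}_k$ is $(t_k,33\epsilon_0/32)$-separated'' fails in the case of distinct base points $x_1\neq x_2 \in \mathcal{T}_k$: the shadowing perturbs $z(x_i,y_i)$ off $x_i$ by $\epsilon_0/2^{k+6}$, so the separation would erode at every stage and the inductive hypothesis would not reproduce. I would remedy this by strengthening the hypothesis to $\mathcal{T}_k$ is $(t_k,e_k)$-separated with
$$e_k := \frac{33\epsilon_0}{32} + \frac{\epsilon_0}{2^{k+4}},$$
designed so that the geometric series of future shadowing losses $\sum_{j\geq 1}\epsilon_0/2^{k+5+j-1} = \epsilon_0/2^{k+4}$ cannot consume the whole buffer. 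The base case $e_1 = 17\epsilon_0/16$ matches the separation furnished by Lemma \ref{lem 3.6} exactly. In the inductive step, the subcase $x_1=x_2$ is handled by the second displayed inequality of the lemma, while for $x_1\neq x_2$ the triangle inequality yields
$$d_{t_{k+1}}(z_1,z_2) \geq d_{t_k}(x_1,x_2) - 2\cdot\frac{\epsilon_0}{2^{k+6}} > e_k - \frac{\epsilon_0}{2^{k+5}} = e_{k+1}.$$
Since $e_k > 33\epsilon_0/32$ for every $k$, the stated separation follows.

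The final clause about disjointness of closed Bowen balls is then immediate: if some point belonged to both $\overline{B}_{t_k}(z_1,\epsilon_0/2^{k+5})$ and $\overline{B}_{t_k}(z_2,\epsilon_0/2^{k+5})$, the triangle inequality would force $d_{t_k}(z_1,z_2) \leq \epsilon_0/2^{k+4}$, which is much smaller than $33\epsilon_0/32$, contradicting the separation just established. The only substantive issue in the whole proof is choosing the buffered quantity $e_k$ correctly so that the induction closes; once this choice is made, each step reduces to the triangle inequality and the defining estimates of the specification shadowing.
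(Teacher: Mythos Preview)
Your argument is correct and uses the same triangle-inequality mechanics as the paper. In fact you are more careful than the paper: its proof only verifies the two displayed inequalities for a common base point $x$ and then asserts that the separation of $\mathcal{T}_k$ and the disjointness of the Bowen balls are ``a straight consequence of the second inequality'', without ever treating the subcase $x_1\neq x_2$ or making the induction explicit. Your observation that a naive induction with the fixed constant $33\epsilon_0/32$ erodes under the shadowing error, and your remedy via the buffered threshold $e_k=33\epsilon_0/32+\epsilon_0/2^{k+4}$, is exactly what is needed to turn the paper's one-line claim into an honest proof.

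One small wording issue to fix: in the inductive step with $x_1=x_2$ you invoke ``the second displayed inequality of the lemma'', which literally yields only $d_{t_{k+1}}(z_1,z_2)>33\epsilon_0/32$. That is not enough to propagate the strengthened hypothesis, since you need $>e_{k+1}=33\epsilon_0/32+\epsilon_0/2^{k+5}$. You should instead cite the sharper intermediate bound you already derived,
\[
d_{t_{k+1}}(z_1,z_2)>\frac{17\epsilon_0}{16}-\frac{\epsilon_0}{2^{k+5}},
\]
and observe that $\dfrac{17}{16}-\dfrac{1}{2^{k+5}}\ \ge\ \dfrac{33}{32}+\dfrac{1}{2^{k+5}}$ for every $k\ge 1$ (equivalently $2^{k+4}\ge 32$). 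With that one adjustment both subcases yield $d_{t_{k+1}}(z_1,z_2)>e_{k+1}$ and the induction closes exactly as you intend.
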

\begin{proof}
	Let $z_1=z(x,y_1),\ z_2=(x,y_2)$. Hence, we have 
	$$d_{t_k}(z_1,z_2)\leq d_{t_k}(z_1,x)+d_{t_k}(z_2,x)<\frac{\epsilon_0}{2^{(k+6)}}+\frac{\epsilon_0}{2^{(k+6)}}=\frac{\epsilon_0}{2^{(k+5)}},$$
	\begin{align*}
		d_{t_{k+1}}(z_1,z_1)&\ge d_{c_{k+1}}(f^{t_k+m_{k+1}}z_1,f^{t_k+m_{k+1}}z_2)\\
		&>\frac{17\epsilon_0}{16}-\frac{\epsilon_0}{2^{(k+6)}}-\frac{\epsilon_0}{2^{(k+6)}}\ge\frac{33\epsilon_0}{32}.
	\end{align*}
	The third statement is a straight  consequence of the second inequality.
\end{proof}
As a direct result of the Lemma \ref{lem 3.7}, we have 
$$\#\mathcal{T}_k=\#\mathcal{T}_{k-1}\#\mathcal{C}_k=\#\mathcal{C}_1\dots\#\mathcal{C}_k=\#{\mathcal{S}_1}^{N_1}\dots\#{\mathcal{S}_k}^{N_k}.$$
\begin{lem}\label{lem 3.8}
	Let $z=z(x,y)\in\mathcal{T}_k$, then we have
	$$\overline{B}_{t_{k+1}}\left(z,\frac{\epsilon_0}{2^{k+6}}\right)\subset\overline{B}_{t_k}\left(x,\frac{\epsilon_0}{2^{k+5}}\right).$$
\end{lem}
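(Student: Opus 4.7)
The plan is a straightforward triangle-inequality argument in the Bowen pseudometric. I would fix an arbitrary point $w \in \overline{B}_{t_{k+1}}(z, \epsilon_0/2^{k+6})$ and show that $w \in \overline{B}_{t_k}(x, \epsilon_0/2^{k+5})$. Two elementary ingredients drive the proof: first, the defining inequality $d_{t_k}(x,z) < \epsilon_0/2^{k+6}$ that is built into the construction of $z = z(x,y)$ via the specification property in Step 2; second, the monotonicity $d_{t_k}(u,v) \leq d_{t_{k+1}}(u,v)$, which is immediate from $d_n(u,v) = \max_{0 \leq j \leq n-1} d(f^j u, f^j v)$ together with $t_k \leq t_{k+1}$.

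The execution is then two lines. Monotonicity upgrades the assumed bound $d_{t_{k+1}}(z,w) \leq \epsilon_0/2^{k+6}$ to $d_{t_k}(z,w) \leq \epsilon_0/2^{k+6}$; combining with $d_{t_k}(x,z) < \epsilon_0/2^{k+6}$ via the triangle inequality gives
$$d_{t_k}(x,w) \leq d_{t_k}(x,z) + d_{t_k}(z,w) < \frac{\epsilon_0}{2^{k+6}} + \frac{\epsilon_0}{2^{k+6}} = \frac{\epsilon_0}{2^{k+5}},$$
which is exactly the required inclusion. No genuine difficulty is expected.

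This lemma is bookkeeping in nature, but its role in what follows is essential: together with Lemma \ref{lem 3.7} it shows that the Bowen balls of radius $\epsilon_0/2^{k+6}$ centered at points of the $(k+1)$-st level of the Moran-like fractal form a nested refinement of the larger Bowen balls centered at the parent points in $\mathcal{T}_k$. This nesting property is the mechanism that will later allow one to identify the intersection of the nested balls as a subset of $I_\varphi$ and to control arbitrary covers of $I_\varphi$ from below when estimating $M_\epsilon(f, I_\varphi, d, \psi)$.
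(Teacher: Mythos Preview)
Your proposal is correct and is essentially identical to the paper's own proof: both invoke the construction bound $d_{t_k}(x,z)<\epsilon_0/2^{k+6}$, pass from $d_{t_{k+1}}$ to $d_{t_k}$ by monotonicity, and finish with the triangle inequality. The paper's argument is slightly terser (it leaves the monotonicity step implicit), but there is no substantive difference.
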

\begin{proof}
	From the construction of $\mathcal{T}_k$, $d_{t_k}(z,x)<\frac{\epsilon_0}{2^{k+6}}.$ Thus, for any point $p\in\overline{B}_{t_{k+1}}(z,\frac{\epsilon_0}{2^{k+6}})$, one has
	$$d_{t_k}(p,x)\leq d_{t_k}(p,z)+d_{t_k}(z,x)\leq \frac{\epsilon_0}{2^{k+6}}\cdot2\leq\frac{\epsilon_0}{2^{k+5}},$$
	that implies  $p\in\overline{B}_{t_k}(x,\frac{\epsilon_0}{2^{k+5}})$. Therefore, the result has been proved.
\end{proof}

$\mathbf{Step\ 3.}$ Constructions of the Moran-like fractal and show that it is contained in $I_{\varphi}.$

Let $F_k=\cup_{x\in\mathcal{T}_k}\overline{B}_{t_k}\left(x,\frac{\epsilon_0}{2^{k+5}}\right)$. By Lemma \ref{lem 3.8}, $F_{k+1}\subset F_k$ and we have a decreasing sequence of compact sets, the set $F=\cap_kF_k$ is non-empty. Besides, every point $p\in F$ can be uniquely represented by a sequence $\underline{p}=(\underline{p}_1,\underline{p}_2,\dots)$, where each $\underline{p}_i=(\underline{p}_1^i,\dots,\underline{p}_{N_i}^i)\in\{1,2,\dots,\#\mathcal{S}_i\}^{N_i}.$ Thus, every point in $\mathcal{T}_k$ can be uniquely represented by a finite word $\underline{p}=(\underline{p}_1,\dots,\underline{p}_k).$
\begin{lem}\label{lem 3.9}
	Given $z=z(\underline{p}_1,\dots,\underline{p}_k)\in\mathcal{T}_k$, for all $i\in\{1,\dots,k\}$ and all $l\in\{1,\dots,N_i\}$ we have that
	$$d_{n_i}(x_{p_l^i}^i,f^{t_{i-1}+m_i+(l-1)(m_i+n_i)}z)<\epsilon_0.$$
\end{lem}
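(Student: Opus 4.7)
The plan is to unwind the inductive construction of the Moran-like fractal one level at a time, track how far $z$ drifts from the intermediate centers in each level's Bowen metric, and then chain together three triangle-inequality estimates coming from (i) the shadowing inside $\mathcal{T}_k$, (ii) the shadowing inside $\mathcal{C}_i$, and (iii) the shadowing inside the $k$-tuple defining points of $\mathcal{C}_i$.

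First, I would introduce the ancestors of $z$: for each $i \in \{1,\dots,k\}$ let $z_i \in \mathcal{T}_i$ and $y_i \in \mathcal{C}_i$ be the pair out of which $z_{i+1}$ was built in Step~2 of the construction, with $z_k = z$. By that construction,
\begin{equation*}
d_{t_{j-1}}(z_{j-1}, z_j) < \frac{\epsilon_0}{2^{j+5}}, \qquad d_{c_j}\!\left(y_j, f^{t_{j-1}+m_j} z_j\right) < \frac{\epsilon_0}{2^{j+5}},
\end{equation*}
for every $j \in \{2,\dots,k\}$. Since $t_i \leq t_{j-1}$ whenever $i \leq j-1$, telescoping and using $d_{t_i} \leq d_{t_{j-1}}$ on nested Bowen metrics gives
\begin{equation*}
d_{t_i}(z_i, z) \;\leq\; \sum_{j=i+1}^{k} d_{t_i}(z_{j-1}, z_j) \;<\; \sum_{j=i+1}^{k} \frac{\epsilon_0}{2^{j+5}} \;<\; \frac{\epsilon_0}{2^{i+5}}.
\end{equation*}

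Next I fix $i$ and $l \in \{1,\dots,N_i\}$ and abbreviate $b_l := (l-1)(n_i+m_i)$, $T := t_{i-1}+m_i+b_l$. From the definition of $\mathcal{C}_i$ in Step~1 applied to the word encoding $y_i$, the point $x_{p_l^i}^i$ is shadowed in the $n_i$-Bowen metric by $f^{b_l} y_i$ up to $\epsilon_0/2^{i+5}$. Combining this with the two estimates above via the triangle inequality,
\begin{equation*}
d_{n_i}\!\left(x_{p_l^i}^i, f^{T} z\right) \leq d_{n_i}\!\left(x_{p_l^i}^i, f^{b_l} y_i\right) + d_{n_i}\!\left(f^{b_l} y_i, f^{T} z_i\right) + d_{n_i}\!\left(f^{T} z_i, f^{T} z\right).
\end{equation*}
The first term is bounded by the $\mathcal{C}_i$-shadowing; the second is at most $d_{c_i}(y_i, f^{t_{i-1}+m_i} z_i)$, because the index window $[b_l, b_l+n_i-1]$ sits inside $[0,c_i-1]$ by definition of $c_i$; the third is at most $d_{t_i}(z_i, z)$, because the index window $[T, T+n_i-1]$ lies in $[0,t_i-1]$.

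Each of the three terms is therefore bounded by $\epsilon_0/2^{i+5}$, so the sum is at most $3\epsilon_0/2^{i+5} \leq 3\epsilon_0/64 < \epsilon_0$, which is the claimed inequality. The only real care needed is the index bookkeeping in Step~2, making sure that the time windows used in the second and third terms really do fit inside the respective shadowing ranges $c_i$ and $t_i$; this is exactly where the gap insertions of length $m_i$ and the choice of $b_l$ are designed to match up.
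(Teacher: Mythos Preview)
Your proof is correct and follows essentially the same route as the paper: define the ancestors $z_i\in\mathcal{T}_i$, split $d_{n_i}(x_{p_l^i}^i,f^{T}z)$ into the three pieces coming from the $\mathcal{C}_i$-shadowing, the Step~2 shadowing of $y_i$ by $f^{t_{i-1}+m_i}z_i$, and the telescoped distance $d_{t_i}(z_i,z)$, then sum the geometric bounds. Your bookkeeping is in fact cleaner than the paper's own write-up (which contains a couple of index typos in the triangle inequality and in the bound on the middle term); the one minor caveat, shared with the paper, is the boundary case $i=1$, where $\mathcal{T}_1=\mathcal{C}_1$ forces $z_1=y_1$ and the second term simply vanishes.
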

\begin{proof}
	Given $i\in\{1,\dots,k\}$ and $l\in\{1,\dots,N_i\}.$ For $m\in\{1,\dots,k-1\}$, let $z_m=z(\underline{p}_1,\dots,\underline{p}_m)\in \mathcal{T}_m$. Let $a=t_{i-1}+m_i,\ b=(l-1)(m_i+n_i)$. Then,
	\begin{align*}
	d_{n_i}(x_{p_l^i}^i,f^{a+b}z)&\leq d_{n_i}(x_{p_l^i}^i,f^{b}y{\underline{p}_i})+d_{n_i}(f^by_{\underline{p}_i}^i,f^{a+b}z)+d_{n_i}(f^{a+b}z_i,f^{a+b}z)\\
&<\frac{\epsilon_0}{2^{i+5}}+d_{c_i}(y_{\underline{p}_i}^i,f^az)+d_{t_i}(z_i,z)\\
&<\frac{\epsilon_0}{2^{i+5}}+\frac{\epsilon_0}{2^{i+6}}+d_{t_i}(z_i,z_{i+1})+\cdots+d_{t_i}(z_{k-1},z_k)\\
&<\frac{\epsilon_0}{2^{i+5}}+\frac{\epsilon_0}{2^{i+6}}+\frac{\epsilon_0}{2^{i+6}}+\frac{\epsilon_0}{2^{i+7}}+\cdots+\frac{\epsilon_0}{2^{k+5}}\\
&<\sum_{m=1}^{k}\frac{\epsilon_0}{2^{m+5}}+\frac{\epsilon_0}{2^{i+6}}<\epsilon_0.
\end{align*}
\end{proof}

\begin{lem}\label{lem 3.10}
	Under the above conditions, $F\subset I_{\varphi},$ which means that the sequence $\{\frac{1}{n}\sum_{i=0}^{n-1}\varphi(f^ix)\}$ diverges.
\end{lem}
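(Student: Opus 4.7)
The plan is to show, for an arbitrary $p\in F$, that the Birkhoff averages $A_n\varphi(p):=\frac{1}{n}S_n\varphi(p)$ admit two distinct accumulation points, which forces $p\in I_\varphi$. Since the indexing function $\rho$ alternates between $1$ and $2$, the natural candidate subsequence is $n=t_k$; along it, I expect $A_{t_k}\varphi(p)$ to sit close to $\alpha_{\rho(k)}$, yielding subsequential limits near both $\alpha_1$ and $\alpha_2$, which are distinct by Lemma~\ref{lem 3.1}. Throughout I would also assume that the choice of $\epsilon_0$ in Lemma~\ref{lem 3.1} was refined so that $Var(\varphi,\epsilon_0)<|\alpha_1-\alpha_2|/4$; this is only a matter of shrinking $\epsilon_0$, using uniform continuity of $\varphi$.

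First I would fix $p\in F$ with its unique coding $\underline p=(\underline p_1,\underline p_2,\ldots)$ and feed it into Lemma~\ref{lem 3.9}: for every $l\in\{1,\dots,N_k\}$,
\[ d_{n_k}\bigl(x^k_{p^k_l},\,f^{a_{k,l}}p\bigr)<\epsilon_0,\qquad a_{k,l}=t_{k-1}+m_k+(l-1)(m_k+n_k). \]
Together with the refined choice of $\epsilon_0$, this yields $|S_{n_k}\varphi(f^{a_{k,l}}p)-S_{n_k}\varphi(x^k_{p^k_l})|\le n_k\,Var(\varphi,\epsilon_0)$. Combining with the level-$k$ analogue of Lemma~\ref{lem 3.5}(2), $|S_{n_k}\varphi(x^k_{p^k_l})-n_k\alpha_{\rho(k)}|<4\delta_k n_k$, every length-$n_k$ block of the orbit of $p$ contributes approximately $n_k\alpha_{\rho(k)}$, with additive error bounded by $n_k(4\delta_k+Var(\varphi,\epsilon_0))$.

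Next I would decompose
\[ S_{t_k}\varphi(p)=S_{t_{k-1}}\varphi(p)+\sum_{l=1}^{N_k}S_{n_k}\varphi\bigl(f^{a_{k,l}}p\bigr)+G_k, \]
where $G_k$ collects the contributions from the $N_k$ inter-block gaps (each of length at most $m_k$) and satisfies $|G_k|\le N_k m_k\,\|\varphi\|$. The initial-segment term is bounded by $t_{k-1}\|\varphi\|$. Dividing by $t_k\asymp N_k(n_k+m_k)+t_{k-1}$ and invoking the two growth conditions $(n_{k+1}+m_{k+1})/N_k\to 0$ and $t_{k-1}/N_{k+1}\to 0$ (which force $t_{k-1}/t_k\to 0$ and $N_k m_k/t_k\to 0$), both boundary terms vanish in the limit, leaving
\[ \limsup_{k\to\infty}\bigl|A_{t_k}\varphi(p)-\alpha_{\rho(k)}\bigr|\le Var(\varphi,\epsilon_0). \]
Splitting $k$ along even and odd integers produces two accumulation points lying within $Var(\varphi,\epsilon_0)<|\alpha_1-\alpha_2|/4$ of $\alpha_1$ and $\alpha_2$ respectively; these are therefore distinct, so $p\in I_\varphi$.

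The main obstacle is propagating shadowing control uniformly across all scales $k$: a priori each additional level introduces a new shadowing error, and one needs to prevent these from accumulating. This is exactly why the construction uses the geometrically decaying tolerances $\epsilon_0/2^{k+5}$ at level $k$; the telescoping bound in Lemma~\ref{lem 3.9} then collapses to the single fixed constant $\epsilon_0$, independent of $k$. Once this uniform bound is in place, the remainder is elementary arithmetic with the prescribed growth rates of $N_k$, $n_k$, $m_k$.
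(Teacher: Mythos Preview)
Your overall strategy matches the paper's: show that along the subsequence $n=t_k$ the Birkhoff averages of any $p\in F$ oscillate between neighborhoods of $\alpha_1$ and $\alpha_2$. The decomposition you write down and the growth estimates on $t_{k-1}/t_k$ and $N_k m_k/t_k$ are all correct. The gap is in the additional hypothesis you impose.

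You claim that one may refine $\epsilon_0$ so that $Var(\varphi,\epsilon_0)<|\alpha_1-\alpha_2|/4$, calling it ``only a matter of shrinking $\epsilon_0$''. But in Lemma~\ref{lem 3.1} the measures $\mu_1,\mu_2$ are chosen \emph{after} $\epsilon_0$, and both depend on it: $\mu_1$ is selected to nearly realize the supremum at scale $5\epsilon_0$, and $\mu_2=t\mu_1+(1-t)\nu$ with $t$ pushed close to $1$ to preserve an $\epsilon_0$-dependent entropy inequality. Shrinking $\epsilon_0$ forces you to re-choose $\mu_1$ (hence $\alpha_1$), and the new $|\alpha_1-\alpha_2|=(1-t)\,|\alpha_1-\int\varphi\,d\nu|$ may be arbitrarily small since $t$ can be driven toward $1$. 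So the inequality you need cannot be guaranteed by this mechanism; the argument is circular.

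The paper sidesteps this entirely by not stopping at the uniform bound $\epsilon_0$ from Lemma~\ref{lem 3.9}. Instead it works in three layers ($\mathcal{C}_k$, then $\mathcal{T}_k$, then $F$), and at each step the relevant shadowing precision is the level-$k$ tolerance $\epsilon_0/2^{k+5}$, not the accumulated $\epsilon_0$. Concretely, for $y\in\mathcal{C}_k$ the blocks are shadowed to within $\epsilon_0/2^{k+5}$; passing from $\mathcal{T}_k$ to $F$ adds another $\epsilon_0/2^{k+5}$. Hence the error term is $Var(\varphi,\epsilon_0/2^{k+5})+4\delta_k$, and both pieces vanish as $k\to\infty$, yielding
\[
\lim_{k\to\infty}\Bigl|\tfrac{1}{t_k}S_{t_k}\varphi(p)-\alpha_{\rho(k)}\Bigr|=0,
\]
with no residual $Var(\varphi,\epsilon_0)$ to compare against $|\alpha_1-\alpha_2|$. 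Your argument can be repaired the same way: note that Lemma~\ref{lem 3.9} is stated for $z\in\mathcal{T}_k$ (not $p\in F$), and that for the top level $i=k$ its proof actually gives the much sharper bound $d_{n_k}(x^k_{p^k_l},f^{a_{k,l}}z)<\epsilon_0/2^{k+5}+\epsilon_0/2^{k+6}$; adding $d_{t_k}(p,z_k)\le\epsilon_0/2^{k+5}$ still leaves an $O(\epsilon_0/2^{k})$ error, which is all you need.
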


\begin{proof}
	For any $x\in F$, we only need to show $\lim\limits_{k\to\infty}\left|\frac{1}{t_k}S_{t_k}\varphi(x)-\alpha_{\rho(k)}\right|=0.$ Thus, we need to estimate $\left|S_{t_k}\varphi(x)-t_k\alpha_{\rho(k)}\right|$. We can divide the estimation into 3 steps.
	
	$\mathbf{Step\ 1.}$ Estimation on $\mathcal{C}_k$ for $k\ge1.$

	Supposing $y\in\mathcal{C}_k$, let us estimate $\left|\sum_{p=0}^{c_k-1}\varphi(f^py)-c_k\alpha_{\rho(k)}\right|$. By the construction of $\mathcal{C}_k$, there exists $(i_1,\dots,i_{N_k})\in(1,\dots,\#\mathcal{S}_k)^{N_k}$  and $x_{i_j}^k\in\mathcal{S}_k $ satisfying
	$$d_{n_k}(x_{i_j}^k,f^{a_j}y)<\frac{\epsilon_0}{2^{k+5}}\ for\ j=1,\dots,N_k.$$
	Since 
	\begin{align*}
	[0,c_k-1]&=[0,N_kn_k+(N_k-1)m_k-1]\\
	&=\bigcup_{j=1}^{N_k}[a_j,a_j+n_k-1]\cup\bigcup_{j=1}^{N_k-1}[a_j+n_k,a_j+n_k+m_k-1],
\end{align*}
on $[a_j,a_j+n_k-1]$, we have 
\begin{align*}
\left|\sum_{p=0}^{n_k-1}\varphi(f^{a_j+p}y)-n_k\alpha_{\rho(k)}\right|\leq &\left|\sum_{p=0}^{n_k-1}\varphi(f^{a_j+p}y)-\sum_{p=0}^{n_k-1}\varphi(f^px_{i_j}^k)\right|\\
&+\left|\sum_{p=0}^{n_k-1}\varphi(f^px_{i_j}^k)-n_k\alpha_{\rho(k)}\right|\\
&\leq n_kVar\left(\varphi,\frac{\epsilon_0}{2^{k+5}}\right)+4n_k\delta_k;
\end{align*}
on $[a_j+n_k,a_j+n_k+m_k-1]$, we have
$$\left|\sum_{p=0}^{m_k-1}\varphi(f^{a_j+n_k+p}y)-m_k\alpha_{\rho(k)}\right|\leq m_k(\|\varphi\|+\alpha_{\rho(k)})\leq 2m_k\|\varphi\|.$$
Combining these inequalities, we have 
$$\left|\sum_{p=0}^{c_k-1}\varphi(f^py)-c_k\alpha_{\rho(k)}\right|\leq N_kn_k\left(Var\left(\varphi,\frac{\epsilon_0}{2^{k+5}}\right)+4\delta_k\right)+2(N_k-1)m_k\|\varphi\|.$$

$\mathbf{Step\ 2.}$ Estimation on $\mathcal{T}_k$ for $k\ge2.$

For $k\ge2$, let us estimate 
$$A_k:=\max_{x\in\mathcal{T}_k}\left|\sum_{p=0}^{t_k-1}\varphi(f^pz)-t_k\alpha_{\rho(k)}\right|$$
For any $z\in\mathcal{T}_k$, there exists $x\in\mathcal{T}_{k-1}$ and $y\in\mathcal{C}_k$ satisfying
$$d_{t_{k-1}}(x,z)<\frac{\epsilon_0}{2^{k+5}},\ d_{c_k}(y,f^{t_k+m_{k-1}}z)<\frac{\epsilon_0}{2^{k+5}}.$$
On $[0,t_{k-1}+m_k-1]$, we have $|\varphi-\alpha_{\rho(k+1)}|\leq 2\|\varphi\|$, while on $[t_{k-1}+m_k,t_k-1]$ we use the specification property and estimation on $\mathcal{C}_k$ to obtain
\begin{align*}
	A_k\leq&2(t_{k-1}+m_k)\|\varphi\|+c_k\cdot Var(\varphi,\frac{\epsilon_0}{2^{k+5}})\\
	&+N_kn_k\left(Var\left(\varphi,\frac{\epsilon_0}{2^{k+5}}\right)+4\delta_k\right)+2(N_k-1)m_k\|\varphi\|.
\end{align*}
By the choice of $N_k\ and\ n_k$, wh have
$$\frac{t_{k-1}+m_k}{N_k}\to0,\ \frac{(N_k-1)m_k}{N_kn_k}\to0\ as\ k\to\infty.$$
Therefore, taking $k\to\infty$ we have 
$$\frac{A_k}{t_k}\leq\frac{2(t_{k-1}+m_k)\|\varphi\|}{N_k}+2Var\left(\varphi,\frac{\epsilon_0}{2^{k+5}}\right)+4\delta_k+\frac{2(N_k-1)m_k\|\varphi\|}{N_kn_k}\to0.$$

$\mathbf{Step\ 3.}$ Estimation on $F$.

For any $x\in F$, $x\in F_k=\cup_{x\in\mathcal{T}_k}\overline{B}_{t_k}\left(x,\frac{\epsilon_0}{2^{k+5}}\right)$, there exists $z_k(x)\in\mathcal{T}_k$ such that $x\in\overline{B}_{t_k}\left(x,\frac{\epsilon_0}{2^{k+5}}\right)$. Thus,
\begin{align*}
	&\left|S_{t_k}\varphi(x)-t_k\alpha_{\rho(k)}\right|\\
	&\leq\left|S_{t_k}\varphi(x)-S_{t_k}\varphi(z_k(x))\right|+\left|S_{t_k}\varphi(z_k(x))-t_k\alpha_{\rho(k)}\right|\\
	&\leq t_k\cdot Var\left(\varphi,\frac{\epsilon_0}{2^{k+5}}\right)+A_k.
\end{align*}
Dividing the both by $t_k$ and taking $k\to\infty$, we have 
$$\lim\limits_{k\to\infty}\left|\frac{1}{t_k}S_{t_k}\varphi(x)-\alpha_{\rho(k)}\right|=0.$$
The proof has been completed.
\end{proof}

\subsection{Construct a suitable measure on $F$} We now define the measure on $F$ which satisfies the generalized pressure distribution principle.

For each $k$ and every $z=z(\underline{p}_1,\dots,\underline{p}_k)\in\mathcal{T}_k$, we define $\mathcal{L}(z):=\mathcal{L}(\underline{p}_1)\dots\mathcal{L}(\underline{p}_k)$ and $\underline{p}_i=(p_1^i,\dots,p_1^{N_i})\in\{1,\dots,\#\mathcal{S}_i\}^{N_i}$. Let
$$\mathcal{L}(\underline{p}_i):=\prod_{l=1}^{N_i}\exp\left\{S_{n_i}\psi(x_{p_l^i}^i)\log{\frac{1}{5\epsilon_0}}\right\},\ \nu_k:=\sum_{x\in\mathcal{T}_k}^{}\delta_z\mathcal{L}(z).$$ 
Normalizing $\nu_k$ to obtain a sequence of probability measures $\mu_k$. Let
$$\kappa_k:=\sum_{x\in\mathcal{T}_k}^{}\mathcal{L}(z),\ \mu_k:=\frac{1}{\kappa_k}\nu_k.$$

\begin{lem}\label{lem 3.11}
	$\kappa_k=\prod_{i=1}^{k}M_i^{N_i}.$
\end{lem}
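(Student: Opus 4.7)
The plan is to prove Lemma \ref{lem 3.11} by a direct combinatorial expansion, exploiting the fact that the weights $\mathcal{L}(z)$ on $\mathcal{T}_k$ factor as a product along the hierarchical coding provided by the Moran construction. No dynamical input beyond the already-established bijective encoding of $\mathcal{T}_k$ is needed; the content is purely a distributive law.

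First, I would recall that by Lemma \ref{lem 3.7} (and its evident iteration through the construction of $\mathcal{T}_{k+1}$ from $\mathcal{T}_k$ and $\mathcal{C}_{k+1}$) the map
\[
(\underline{p}_1,\dots,\underline{p}_k)\longmapsto z(\underline{p}_1,\dots,\underline{p}_k)\in\mathcal{T}_k
\]
is a bijection between $\prod_{i=1}^{k}\{1,\dots,\#\mathcal{S}_i\}^{N_i}$ and $\mathcal{T}_k$. Therefore, using the definition $\mathcal{L}(z)=\mathcal{L}(\underline{p}_1)\cdots\mathcal{L}(\underline{p}_k)$, one has
\[
\kappa_k \;=\; \sum_{z\in\mathcal{T}_k}\mathcal{L}(z) \;=\; \sum_{\underline{p}_1}\cdots\sum_{\underline{p}_k}\prod_{i=1}^{k}\mathcal{L}(\underline{p}_i) \;=\; \prod_{i=1}^{k}\Bigl(\sum_{\underline{p}_i\in\{1,\dots,\#\mathcal{S}_i\}^{N_i}}\mathcal{L}(\underline{p}_i)\Bigr),
\]
where the last equality is the standard factorization of a sum of products over a Cartesian product index set.

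Next, I would compute each factor. Writing $\underline{p}_i=(p_1^i,\dots,p_{N_i}^i)$ and using the definition
\[
\mathcal{L}(\underline{p}_i) \;=\; \prod_{l=1}^{N_i}\exp\Bigl\{S_{n_i}\psi(x_{p_l^i}^i)\log\tfrac{1}{5\epsilon_0}\Bigr\},
\]
the same distributive identity yields
\[
\sum_{\underline{p}_i}\mathcal{L}(\underline{p}_i) \;=\; \prod_{l=1}^{N_i}\sum_{p_l^i=1}^{\#\mathcal{S}_i}\exp\Bigl\{S_{n_i}\psi(x_{p_l^i}^i)\log\tfrac{1}{5\epsilon_0}\Bigr\} \;=\; \Bigl(\sum_{x\in\mathcal{S}_i}\exp\{S_{n_i}\psi(x)\log\tfrac{1}{5\epsilon_0}\}\Bigr)^{N_i} \;=\; M_i^{N_i},
\]
where the last equality uses the defining formula for $M_i$ (the level-$i$ analogue of $M_1$ from Lemma \ref{lem 3.5}). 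Substituting this back gives $\kappa_k=\prod_{i=1}^{k}M_i^{N_i}$, as claimed.

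There is no real obstacle here; the only thing to be slightly careful about is bookkeeping — making sure the indexing $(\underline{p}_1,\dots,\underline{p}_k)$ really enumerates $\mathcal{T}_k$ without repetition, which is exactly the content of the separation statements in Lemmas \ref{lem 3.6} and \ref{lem 3.7}, and that $M_i$ is interpreted (via the analogue of Lemma \ref{lem 3.5}) as $\sum_{x\in\mathcal{S}_i}\exp\{S_{n_i}\psi(x)\log\tfrac{1}{5\epsilon_0}\}$ rather than just via its lower bound. Once those identifications are made, the proof is a single line of Fubini-type reindexing.
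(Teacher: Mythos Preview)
Your proof is correct and follows essentially the same route as the paper: both use the bijection between $\mathcal{T}_k$ and the coding space $\prod_{i=1}^{k}\{1,\dots,\#\mathcal{S}_i\}^{N_i}$, factor the resulting sum of products via the distributive law, and identify each inner factor with $M_i^{N_i}$ through the identity $\sum_{\underline{p}_i}\mathcal{L}(\underline{p}_i)=\bigl(\sum_{x\in\mathcal{S}_i}\exp\{S_{n_i}\psi(x)\log\tfrac{1}{5\epsilon_0}\}\bigr)^{N_i}$. Your version is, if anything, a bit more explicit about the bookkeeping (invoking Lemmas~\ref{lem 3.6} and~\ref{lem 3.7} for injectivity of the coding) and about the meaning of $M_i$ as a sum rather than merely via its lower bound.
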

\begin{proof}
	We note that
	\begin{align*}
		\kappa_k=\sum_{x\in\mathcal{T}_k}^{}\mathcal{L}(z)&=\sum_{\underline{p}_1\in\{1,\dots,\#\mathcal{S}_1\}^{N_1}}^{}\dots\sum_{\underline{p}_k\in\{1,\dots,\#\mathcal{S}_k\}^{N_k}}^{}(\mathcal{L}(\underline{p}_1)\cdots\mathcal{L}(\underline{p}_1))\\
		&=\left(\sum_{\underline{p}_1\in\{1,\dots,\#\mathcal{S}_1\}^{N_1}}^{}\mathcal{L}(\underline{p}_1)\right)\cdots\left(\sum_{\underline{p}_1\in\{1,\dots,\#\mathcal{S}_k\}^{N_k}}^{}\mathcal{L}(\underline{p}_k)\right).
	\end{align*}
From the definition of $\mathcal{L}(\underline{p}_i)$, for every $i$ we have
$$\left(\sum_{\underline{p}_i\in\{1,\dots,\#\mathcal{S}_i\}^{N_i}}^{}\mathcal{L}(\underline{p}_1)\right)=\prod_{l=1}^{N_i}\left\{\sum_{p^i_l=1}^{\#\mathcal{S}_i}\exp\left\{S_{n_i}\psi(x_{p_l^i}^i)\log{\frac{1}{5\epsilon_0}}\right\}\right\}=M_i^{N_i}.$$
Hence, $\kappa_k=\prod_{i=1}^{k}M_i^{N_i}.$
\end{proof}

\begin{lem}\label{lem 3.12}\cite[Lemma 3.10]{th10}
	Suppose $\nu$ is an accumulation point of the sequence of probability measures $\mu_k$. Then $\nu(F)=1$.
\end{lem}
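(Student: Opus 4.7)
The plan is to establish $\nu(F_l)=1$ for every $l\in\N$ and then use the fact that $F=\bigcap_l F_l$ is a decreasing intersection to conclude $\nu(F)=1$. The central observation is that the sets $F_l=\bigcup_{x\in\mathcal{T}_l}\overline{B}_{t_l}(x,\epsilon_0/2^{l+5})$ are closed, since a Bowen closed ball $\overline{B}_n(x,r)=\{y:d_n(x,y)\leq r\}$ is closed in $X$ by continuity of $f$ (hence of $d_n$), and a finite union of closed sets is closed.

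First I would show that $\mu_k$ is supported on $F_l$ whenever $k\geq l$. By construction each point of $\mathcal{T}_k$ is uniquely coded as $z=z(\underline{p}_1,\dots,\underline{p}_k)$, with the ancestor $z(\underline{p}_1,\dots,\underline{p}_l)\in\mathcal{T}_l$ obtained by truncation. Iterating Lemma \ref{lem 3.8} along the chain
\[
\overline{B}_{t_k}\!\left(z,\tfrac{\epsilon_0}{2^{k+6}}\right)\subset\overline{B}_{t_{k-1}}\!\left(z(\underline{p}_1,\dots,\underline{p}_{k-1}),\tfrac{\epsilon_0}{2^{k+5}}\right)\subset\cdots\subset\overline{B}_{t_l}\!\left(z(\underline{p}_1,\dots,\underline{p}_l),\tfrac{\epsilon_0}{2^{l+5}}\right),
\]
one sees in particular that $z\in F_l$. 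Hence $\mathcal{T}_k\subset F_l$, and since $\mu_k$ is supported on $\mathcal{T}_k$, we get $\mu_k(F_l)=1$ for all $k\geq l$.

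Next, if $\mu_{k_j}\to\nu$ weakly and $l$ is fixed, the Portmanteau theorem applied to the closed set $F_l$ yields
\[
\nu(F_l)\geq\limsup_{j\to\infty}\mu_{k_j}(F_l)=1,
\]
once $k_j\geq l$. Finally, since $\{F_l\}_{l\in\N}$ is a decreasing sequence of Borel sets and $\nu$ is a probability measure, continuity from above gives $\nu(F)=\nu\bigl(\bigcap_l F_l\bigr)=\lim_{l\to\infty}\nu(F_l)=1$.

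The only step that requires real care is the nested containment $\mathcal{T}_k\subset F_l$; everything else is a routine application of Portmanteau plus monotone continuity. The potential pitfall is keeping the geometric series of radii $\sum_{m\geq l+5}\epsilon_0/2^m$ under control so that Lemma \ref{lem 3.8} does in fact telescope cleanly from level $k$ down to level $l$, but the dyadic shrinking of the radii was built in precisely to make this telescoping work.
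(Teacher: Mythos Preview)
Your proof is correct and follows essentially the same approach as the paper: both establish $\mu_k(F_l)=1$ for $k\geq l$ (the paper via the nesting $F_k\subset F_l$ from Lemma \ref{lem 3.8}, you via the same lemma iterated pointwise), then apply Portmanteau on the closed set $F_l$ and continuity from above. You simply supply more detail, in particular the closedness of $F_l$ and the explicit invocation of Portmanteau, which the paper leaves implicit.
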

\begin{proof}
Let $\nu=\lim\limits_{k\to\infty}\mu_{l_k}$ for some $l_k\to\infty$. For any fixed $l$ and all $p\ge0$, since $\mu_{l+p}(F_{l+p})=1$ and $F_{l+p}\subset F_l$, we have $\mu_{l+p}(F_l)=1$. Thus, $\nu(F_l)\ge\limsup_{k\to\infty}\mu_{l_k}(F_{l})=1$. It implies that 
$$\nu(F)=\lim\limits_{l\to\infty}\nu(F_l)=1.$$
\end{proof}

Let $\mathcal{B}:=B_n(q,\epsilon_0/2)$ be an arbitrary ball which intersects $F$. There exists unique $k$ that satisfies $t_k\leq n<t_{k+1}$ and unique $j\in\{0,\dots,N_{k+1}-1\}$ that satisfies $t_k+j(n_{k+1}+m_{k+1})\leq n<t_k+(j+1)(n_{k+1}+m_{k+1}).$ Thus, we have the following lemma which reflects the fact that the number of points in $\mathcal{B}\cap\mathcal{T}_{k+1}$ is restricted.
\begin{lem}\label{lem 3.13}
	Suppose $\mu_{k+1}(\mathcal{B})>0$, then there exists a unique $x\in\mathcal{T}_k$ and $i_1,\dots,i_j\in\{1,\dots,\#\mathcal{S}_{k+1}\}$ satisfying
	$$\nu_{k+1}(\mathcal{B})\leq\mathcal{L}(x)\left(\prod_{l=1}^{j}\exp\left(S_{n_{k+1}}\psi(x_l^{k+1})\log{\frac{1}{5\epsilon_0}}\right)\right)M_{k+1}^{n_{k+1}-j}.$$
\end{lem}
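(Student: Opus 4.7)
The plan is to show that the Bowen ball $\mathcal{B}$ intersects $\mathcal{T}_{k+1}$ only in points whose coding $(\underline{p}_1,\dots,\underline{p}_{k+1})$ shares a common prefix determined by $\mathcal{B}$, and then to sum $\mathcal{L}(z)$ over the remaining free positions. Let $z, z' \in \mathcal{T}_{k+1} \cap \mathcal{B}$, so the triangle inequality gives $d_n(z, z') < \epsilon_0$. Writing $z = z(x, y)$ and $z' = z(x', y')$ with $x, x' \in \mathcal{T}_k$ and $y, y' \in \mathcal{C}_{k+1}$, if $x \neq x'$ then Lemma \ref{lem 3.7} gives $d_{t_k}(z, z') > \tfrac{33\epsilon_0}{32}$; since $n \ge t_k$ this forces $d_n(z, z') > \epsilon_0$, a contradiction. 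So all points of $\mathcal{T}_{k+1} \cap \mathcal{B}$ share a unique $x \in \mathcal{T}_k$.

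With $x$ fixed, write $y = y(\underline{p}_{k+1})$ and $y' = y(\underline{p}_{k+1}')$, and suppose these words differ at some position $l \le j$. By the choice of $j$, the $l$-th $\mathcal{S}_{k+1}$-shadowing block of $z$ occupies the time window $[t_k + m_{k+1} + (l-1)(n_{k+1}+m_{k+1}),\, t_k + l(n_{k+1}+m_{k+1}) - 1] \subset [0, n-1]$. Combining the two layers of shadowing (accuracy at most $\epsilon_0/2^{k+6}$ in each, from the $\mathcal{T}_{k+1}$ and $\mathcal{C}_{k+1}$ constructions) with the $(n_{k+1}, \tfrac{9\epsilon_0}{8})$-separation of $\mathcal{S}_{k+1}$ established after Lemma \ref{lem 3.4} yields
\[
d_n(z,z') \;\ge\; \frac{9\epsilon_0}{8} - 2\cdot\frac{\epsilon_0}{2^{k+5}} > \epsilon_0,
\]
again a contradiction. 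Hence the entries $i_1 := p^{k+1}_1, \ldots, i_j := p^{k+1}_j$ are common to all admissible $z$, establishing their uniqueness.

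The bound then follows by summing $\mathcal{L}(z) = \mathcal{L}(x) \cdot \prod_{l=1}^{N_{k+1}} \exp\bigl(S_{n_{k+1}}\psi(x_{p^{k+1}_l}^{k+1})\log\tfrac{1}{5\epsilon_0}\bigr)$ over the admissible $z$'s: the factor $\mathcal{L}(x)$ and the first $j$ sub-factors are locked by $x, i_1, \ldots, i_j$, while each of the remaining $N_{k+1}-j$ positions ranges freely over $\{1, \ldots, \#\mathcal{S}_{k+1}\}$ and contributes at most $M_{k+1} = \sum_{p=1}^{\#\mathcal{S}_{k+1}} \exp(S_{n_{k+1}}\psi(x_p^{k+1})\log\tfrac{1}{5\epsilon_0})$ to the total. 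This produces the factor $M_{k+1}^{N_{k+1} - j}$ and the stated inequality (the exponent $n_{k+1}-j$ in the displayed statement appears to be a typographical slip for $N_{k+1}-j$).

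I expect the main obstacle to be the bookkeeping in the second step: carefully verifying that every $\mathcal{S}_{k+1}$-shadowing block for $l \le j$ really fits inside $[0, n-1]$ (using $n \ge t_k + j(n_{k+1}+m_{k+1})$), and that the accumulated shadowing errors from the two nested constructions stay strictly below the slack $\tfrac{9\epsilon_0}{8} - \epsilon_0 = \tfrac{\epsilon_0}{8}$, so that the separation inequality actually contradicts $d_n(z,z')<\epsilon_0$.
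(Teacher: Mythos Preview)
Your proposal is correct and follows essentially the same approach as the paper: first fix the $\mathcal{T}_k$-ancestor by a separation contradiction, then fix the first $j$ letters of $\underline{p}_{k+1}$ by a second separation contradiction, and finally sum $\mathcal{L}(z)$ over the free positions. The paper phrases the two uniqueness steps contrapositively (bounding $d_{t_k}(x_1,x_2)$ and $d_{n_{k+1}}(x_{i_t}^{k+1},x_{l_t}^{k+1})$ from above and invoking separation), whereas you bound $d_n(z,z')$ from below and contradict $d_n(z,z')<\epsilon_0$; these are the same triangle inequality rearranged. One small imprecision: Lemma~\ref{lem 3.7} gives $d_{t_k}(x,x')>\tfrac{33\epsilon_0}{32}$ for $x\neq x'$ in $\mathcal{T}_k$, not directly $d_{t_k}(z,z')>\tfrac{33\epsilon_0}{32}$; you need one more triangle inequality using $d_{t_k}(z,x)<\epsilon_0/2^{k+6}$ to get $d_{t_k}(z,z')>\epsilon_0$, which is all you need. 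Your observation that $n_{k+1}-j$ should read $N_{k+1}-j$ is correct and matches the paper's own text (``there are at most $M_{k+1}^{N_{k+1}-j}$ such points'').
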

\begin{proof}	
	Since we suppose $\mu_{k+1}(\mathcal{B})>0$, then $\mathcal{T}_{k+1}\cap\mathcal{B}\neq\emptyset.$ Let $z_1=z(x_1,y_1),\ z_2=z(x_2,y_2)\in\mathcal{T}_k\cap\mathcal{B}$, where $x_1,x_2\in\mathcal{T}_k\ and\ y_1,y_2\in\mathcal{C}_{k+1}.$ Let $y_1=y(i_1,\dots,i_{N_{k+1}}),\ y_2=y(l_1,\dots,l_{N_{k+1}}).$ We have
	\begin{align*}
	d_{t_k}(x_1,x_2)&\leq d_{t_k}(x_1,z_1)+d_{t_k}(z_1,z_2)+d_{t_k}(z_2,x_2)\\
&<\frac{\epsilon_0}{2^{k+6}}+\epsilon_0+<\frac{\epsilon_0}{2^{k+6}}<\frac{33\epsilon_0}{32}
\end{align*}
and thus we have a contradiction with Lemma \ref{lem 3.7}. Similarly, we prove that $i_t=l_t$ for $t=\{1,\dots,j\}.$ Suppose there exists $t,\ 1\leq t\leq j$, such that $i_t\neq l_t$. Since the specification property, we have 
$$d_{n_{k+1}}(x_{i_t}^{k+1},f^{a_t}y_1)<\frac{\epsilon_0}{2^{k+5}},\ d_{n_{k+1}}(x_{l_t}^{k+1},f^{a_t}y_2)<\frac{\epsilon_0}{2^{k+5}},$$
$$d_{c_{k+1}}(y_1,f^{t_k+m_{k+1}}z_1)<\frac{\epsilon_0}{2^{k+6}},\ d_{c_{k+1}}(y_1,f^{t_k+m_{k+1}}z_1)<\frac{\epsilon_0}{2^{k+6}}.$$
Thus, 
\begin{align*}
	d_{n_{k+1}}(x_{i_t}^{k+1},x_{l_t}^{k+1})&\leq d_{n_{k+1}}(x_{i_t}^{k+1},f^{a_t}y_1)+d_{c_{k+1}}(y_1,f^{t_k+m_{k+1}}z_1)\\
	&+d_n(z_1,z_2)+d_{c_{k+1}}(y_2,f^{t_k+m_{k+1}}z_2)+d_{n_{k+1}}(x_{l_t}^{k+1},f^{a_t}y_2)\\
	&<\epsilon_0+\frac{\epsilon_0}{2^{k+5}}\cdot4<\frac{9\epsilon_0}{8},
\end{align*}
which contradicts the fact that $\mathcal{S}_{k+1}$ is $\left(n_{k+1},\frac{9\epsilon_0}{8}\right)$ separated.
Since $x$ and $(i_1,\dots,i_j)$ is the same for all points $z=z(x,y)$, $y=y(i_1,\dots,i_{N_{k+1}})$ which lies in $\mathcal{T}_{k+1}\cap\mathcal{B}$, we can conclude that there are at most $M_{k+1}^{N_{k+1}-j}$ such pints. Then,
\begin{align*}
\nu_{k+1}(\mathcal{B})&\leq\mathcal{L}(x)\sum_{\underline{p}_{k+1}}^{}\mathcal{L}(\underline{p}_{k+1})\\
&\leq\mathcal{L}(x)\left(\prod_{l=1}^{j}\exp\left(S_{n_{k+1}}\psi(x_{i_l}^{k+1})\log{\frac{1}{5\epsilon_0}}\right)\right)M_{k+1}^{n_{k+1}-j}.
\end{align*}
\end{proof}

\begin{lem}\label{lem 3.14}
	Let $x\in\mathcal{T}_k$ and $i_1,\dots,i_j$ be as before. Then 
	\begin{align*}
		&\mathcal{L}(x)\prod_{l=1}^{j}\exp\left(S_{n_k+1}\psi(x_{i_l}^{k+1})\cdot\log\frac{1}{5\epsilon_0}\right)\\
	&\leq\exp\left(\left(S_n\psi(q)+2nVar(\psi,\epsilon_0)+\left(jm_{k+1}+\sum_{i=1}^{k}N_im_i\right)\|\psi\|\right)\log{\frac{1}{5\epsilon_0}}\right).
	\end{align*}
\end{lem}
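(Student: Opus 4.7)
The plan is to unwind the definition of $\mathcal{L}(x)$, identify the left-hand exponent (divided by $\log\frac{1}{5\epsilon_0}$) with a disjoint union of shadowed Birkhoff sums, and then compare these piece by piece to $S_n\psi(q)$ using the shadowing provided by Lemma \ref{lem 3.9}, the variation estimate $Var(\psi,\epsilon_0)$, and the bound $\|\psi\|$ on the gap contributions.

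First, I would rewrite the left-hand exponent. By the definitions of $\mathcal{L}(x)$ and $\mathcal{L}(\underline{p}_i)$,
\begin{equation*}
\mathcal{L}(x)\prod_{l=1}^{j}\exp\Bigl(S_{n_{k+1}}\psi(x_{i_l}^{k+1})\log\tfrac{1}{5\epsilon_0}\Bigr)
=\exp\Bigl(\Sigma\cdot\log\tfrac{1}{5\epsilon_0}\Bigr),
\end{equation*}
where
\begin{equation*}
\Sigma:=\sum_{i=1}^{k}\sum_{l=1}^{N_i}S_{n_i}\psi(x_{p_l^i}^{i})+\sum_{l=1}^{j}S_{n_{k+1}}\psi(x_{i_l}^{k+1}).
\end{equation*}
Since $\log\frac{1}{5\epsilon_0}>0$ by \eqref{3.1}, the desired inequality reduces to
$\Sigma\le S_n\psi(q)+2n\,Var(\psi,\epsilon_0)+\bigl(jm_{k+1}+\sum_{i=1}^{k}N_im_i\bigr)\|\psi\|.$

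Next, because $\mu_{k+1}(\mathcal{B})>0$, I would fix a point $z\in\mathcal{T}_{k+1}\cap\mathcal{B}$. By Lemma \ref{lem 3.13}, the prefix of $z$ is forced: $z=z(x,y)$ with $y=y(i_1,\dots,i_{N_{k+1}})$, the first $j$ coordinates being the prescribed $i_1,\dots,i_j$. In particular $d_n(q,z)<\epsilon_0/2$, so $\lvert S_n\psi(q)-S_n\psi(z)\rvert\le n\,Var(\psi,\epsilon_0)$. Applying the same chain of estimates as in Lemma \ref{lem 3.9} (carried one level further, to $z\in\mathcal{T}_{k+1}$) yields, for every $(i,l)$ with $1\le i\le k+1$ and $1\le l\le N_i$,
\begin{equation*}
d_{n_i}\bigl(x_{p_l^i}^{i},\,f^{a_{i,l}}z\bigr)<\epsilon_0,\qquad a_{i,l}:=t_{i-1}+m_i+(l-1)(m_i+n_i),
\end{equation*}
whence $\lvert S_{n_i}\psi(x_{p_l^i}^{i})-S_{n_i}\psi(f^{a_{i,l}}z)\rvert\le n_i\,Var(\psi,\epsilon_0)$.

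Summing these estimates over the indices appearing in $\Sigma$ produces
\begin{equation*}
\Sigma\le\sum_{i=1}^{k}\sum_{l=1}^{N_i}S_{n_i}\psi(f^{a_{i,l}}z)+\sum_{l=1}^{j}S_{n_{k+1}}\psi(f^{a_{k+1,l}}z)+n\,Var(\psi,\epsilon_0),
\end{equation*}
where I used that the total shadowed time is at most $n$. The segments $\{[a_{i,l},a_{i,l}+n_i-1]\}$ are pairwise disjoint and contained in $[0,n-1]$ by the choices of $k$ and $j$; the complement inside $[0,n-1]$ consists of the specification gaps, whose total length is at most
$\sum_{i=1}^{k}(N_i-1)m_i+\sum_{i=2}^{k}m_i+jm_{k+1}\le\sum_{i=1}^{k}N_im_i+jm_{k+1}$.
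Hence the shadowed Birkhoff sums fit inside $S_n\psi(z)$ with an error of at most $\bigl(\sum_{i=1}^{k}N_im_i+jm_{k+1}\bigr)\|\psi\|$. Combining this with $S_n\psi(z)\le S_n\psi(q)+n\,Var(\psi,\epsilon_0)$ gives the required bound on $\Sigma$, and exponentiating finishes the proof.

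The main obstacle is really just bookkeeping: one must correctly identify which shadowing relations from the construction of $\mathcal{T}_{k+1}$ survive for a point $z\in\mathcal{T}_{k+1}\cap\mathcal{B}$ (this is exactly what Lemma \ref{lem 3.13} ensures, and this is why it precedes the present lemma), and then carefully tally the gap lengths so that the coefficient of $\|\psi\|$ matches the statement. Once that accounting is in place, the $Var(\psi,\epsilon_0)$ and $\|\psi\|$ estimates propagate mechanically.
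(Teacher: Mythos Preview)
Your proof is correct and follows essentially the same strategy as the paper: use the shadowing from Lemma~\ref{lem 3.9} to transfer each $S_{n_i}\psi(x_{p_l^i}^i)$ to a Birkhoff sum along a single orbit, compare that orbit to $q$ via $Var(\psi,\epsilon_0)$, and absorb the specification gaps into the $\|\psi\|$ term. The only cosmetic difference is that you work with a single reference point $z\in\mathcal{T}_{k+1}\cap\mathcal{B}$ throughout (invoking Lemma~\ref{lem 3.9} at level $k+1$), whereas the paper splits the estimate, using $x\in\mathcal{T}_k$ on $[0,t_k)$ and $z$ on $[t_k,n)$; both routes yield the same bound.
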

\begin{proof}
	Let $x:=x(\underline{p}_1,\dots,\underline{p}_k)$. From Lemma \ref{lem 3.9}, we have 
	$$d_{n_i}(x_{p_l^i}^i,f^{t_{i-1}+m_i+(l-1)(m_i+n_i)}x)<\epsilon_0$$
	for $i\in\{1,\dots,k\}$ and $l\in\{1,\dots,N_i\}$. It follows that
	\begin{align*}
		S_{n_i}\psi(x_{p_l^i}^i)&\leq S_{n_i}\psi(x_{p_l^i}^i)-S_{n_i}\psi(f^{t_{i-1}+m_i+(l-1)(m_i+n_i)}x)\\
		&+S_{n_i}\psi(f^{t_{i-1}+m_i+(l-1)(m_i+n_i)}x)\\
		&\leq n_iVar(\psi,\epsilon_0)+S_{n_i}\psi(f^{t_{i-1}+m_i+(l-1)(m_i+n_i)}x)
	\end{align*}
	and
	\begin{align*}
		S_{n_{k+1}}\psi(x_{i_l}^{k+1})&\leq S_{n_{k+1}}\psi(x_{i_l}^{k+1})-S_{n_{k+1}}\psi(f^{t_k+m_{k+1}+(l-1)(n_{k+1}+m_{k+1})}z)\\
		&+S_{n_{k+1}}\psi(f^{t_k+m_{k+1}+(l-1)(n_{k+1}+m_{k+1})}z)\\
		&\leq n_{k+1}Var\left(\psi,\frac{\epsilon_0}{2^{k+6}}\right)+S_{n_{k+1}}\psi(f^{t_k+m_{k+1}+(l-1)(n_{k+1}+m_{k+1})}z).
	\end{align*}
Thus we have
\begin{align*}
&\sum_{i=1}^{k}\sum_{l=1}^{N_i}S_{n_i}\psi(x_{p_l^i}^i)\cdot\log{\frac{1}{5\epsilon_0}}\\
\leq&\left\{\sum_{i=1}^{k}\sum_{l=1}^{N_i}n_iVar(\psi,\epsilon_0)+\sum_{i=1}^{k}\sum_{l=1}^{N_i}S_{n_i}\psi(f^{t_{i-1}+m_i+(l-1)(m_i+n_i)}x)\right\}\cdot\log{\frac{1}{5\epsilon_0}}\\
\leq&\left\{t_kVar(\psi,\epsilon_0)+S_{t_{k}}\psi(x)+\sum_{i=1}^{k}N_im_i\|\psi\|\right\}\cdot\log{\frac{1}{5\epsilon_0}}
\end{align*}
and
\begin{align*}
	\sum_{l=1}^{j}S_{n_{k+1}}\psi(x_{i_l}^{k+1})\leq(n-t_k)Var\left(\psi,\frac{\epsilon_0}{2^{k+6}}\right)+S_{n-t_k}\psi(f^{t_k}z)+jm_{k+1}\|\psi\|.
\end{align*}
Since $d_{t_k}(x,q)\leq d_{t_k}(x,z)+d_{t_k}(q,z)<\epsilon_0$, we have
\begin{align*}
S_{t_{k}}\psi(x)+S_{n-t_k}\psi(f^{t_k}z)&=S_{t_{k}}\psi(x)-S_{t_{k}}\psi(q)+S_{n-t_k}\psi(f^{t_k}z)\\
&-S_{n-t_k}\psi(f^{t_k}q)+S_n\psi(q)\\
&\leq t_kVar(\psi,\epsilon_0)+(n-t_k)Var(\psi,\epsilon_0)+S_n\psi(q).
\end{align*}
Combining the above inequalities, we have that
\begin{align*}
	&\mathcal{L}(x)\prod_{l=1}^{j}\exp\left(S_{n_k+1}\psi(x_{i_l}^{k+1})\cdot\log\frac{1}{5\epsilon_0}\right)\\
&\leq\exp\left\{\left(S_n\psi(q)+2nVar(\psi,\epsilon_0)+\left(jm_{k+1}+\sum_{i=1}^{k}N_im_i\right)\|\psi\|\right)\log{\frac{1}{5\epsilon_0}}\right\}.
\end{align*}
\end{proof}

Similarly, we give the following Lemma without proof which shows that the points contained in $\mathcal{B}\cap\mathcal{T}_{k+p}$ are restricted either.
\begin{lem}\label{lem 3.15}
	For any $p\ge1$, suppose $\mu_{k+p}(\mathcal{B})>0$. Let $x\in\mathcal{T}_k$ and $i_1,\dots,i_j$ be as before. Then every $x\in\mathcal{B}\cap\mathcal{T}_{k+p}$ descends from some point in $\mathcal{T}_k\cap\mathcal{B}$. We have
	$$\nu_{k+p}(\mathcal{B})\leq\mathcal{L}(x)\left\{\prod_{l=1}^{j}\exp\left(S_{n_{k+1}}\psi(x_l^{k+1})\log{\frac{1}{5\epsilon_0}}\right)\right\}M_{k+1}^{n_{k+1}-j}\cdots M_{k+p}^{n_{k+p}}.$$
\end{lem}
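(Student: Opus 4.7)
The plan is to argue by induction on $p$, with the base case $p=1$ being exactly Lemma~\ref{lem 3.13}. For the inductive step, I exploit the recursive construction $\mathcal{T}_{k+p}=\{z(w,y): w\in\mathcal{T}_{k+p-1},\ y\in\mathcal{C}_{k+p}\}$: each $z\in\mathcal{T}_{k+p}$ has a unique ancestor $w\in\mathcal{T}_{k+p-1}$ with $d_{t_{k+p-1}}(w,z)<\epsilon_0/2^{k+p+5}$. Because $n<t_{k+1}\leq t_{k+p-1}$, the hypothesis $z\in\mathcal{B}$ gives
\[
d_n(q,w)\leq d_n(q,z)+d_n(z,w)<\frac{\epsilon_0}{2}+\frac{\epsilon_0}{2^{k+p+5}},
\]
so $w$ lies in a slightly enlarged Bowen ball around $q$. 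Iterating Lemma~\ref{lem 3.8} this way back to level $k$, each $z\in\mathcal{B}\cap\mathcal{T}_{k+p}$ determines a chain of ancestors at every intermediate level, each of which lies close to $q$ in the corresponding Bowen metric.

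Next, I transplant the separation argument used in Lemma~\ref{lem 3.13} to every level between $k$ and $k+p$. The $\left(t_i,\tfrac{33\epsilon_0}{32}\right)$-separation of $\mathcal{T}_i$ from Lemma~\ref{lem 3.7}, combined with the fact that the radius $\epsilon_0/2$ of $\mathcal{B}$ is small relative to this separation, forces all elements of $\mathcal{B}\cap\mathcal{T}_{k+p}$ to share the same $\mathcal{T}_k$-ancestor $x$. An analogous separation argument at level $k+1$ — identical to the one used inside the proof of Lemma~\ref{lem 3.13} to isolate the prefix $(i_1,\dots,i_j)$ of the $\mathcal{C}_{k+1}$-word — shows that the first $j$ coordinates of $\underline{p}_{k+1}$ are also forced to be $(i_1,\dots,i_j)$. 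The remaining $N_{k+1}-j$ coordinates of $\underline{p}_{k+1}$ and the full words $\underline{p}_{k+2},\dots,\underline{p}_{k+p}$ are unconstrained by $\mathcal{B}$, because $n$ is smaller than the time at which they begin to be shadowed.

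With the admissible labels identified, the product structure $\mathcal{L}(z)=\mathcal{L}(x)\,\mathcal{L}(\underline{p}_{k+1})\cdots\mathcal{L}(\underline{p}_{k+p})$ makes the sum $\nu_{k+p}(\mathcal{B})=\sum_{z\in\mathcal{B}\cap\mathcal{T}_{k+p}}\mathcal{L}(z)$ factor. The frozen part gives $\mathcal{L}(x)\prod_{l=1}^{j}\exp\!\bigl(S_{n_{k+1}}\psi(x_{i_l}^{k+1})\log\tfrac{1}{5\epsilon_0}\bigr)$; each free coordinate at level $k+1$ contributes at most $M_{k+1}$, yielding $M_{k+1}^{N_{k+1}-j}$; and summing freely over $\underline{p}_{k+r}$ for $r=2,\dots,p$ contributes the product $M_{k+2}^{N_{k+2}}\cdots M_{k+p}^{N_{k+p}}$, by exactly the computation carried out in Lemma~\ref{lem 3.11}. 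Assembling these factors gives the bound stated in the lemma.

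The main obstacle I anticipate is verifying that the unique-ancestor and prefix-freezing arguments still go through cleanly once $p$ is large: one has to track how the nested shadowing errors accumulate through $p$ levels and confirm they remain strictly below $\tfrac{33\epsilon_0}{32}$, so that the separation property of each $\mathcal{T}_i$ is still triggered. This is where the geometric decay of the shadowing scales $\epsilon_0/2^{k+i+5}$ is used decisively: the total accumulated error is bounded by $\sum_{i\geq k}\epsilon_0/2^{i+5}<\epsilon_0/2^{k+4}$, which is dominated by the separation constants used in Lemma~\ref{lem 3.7}, so the argument of Lemma~\ref{lem 3.13} transports through all intermediate levels without degeneration.
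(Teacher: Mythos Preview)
Your proposal is correct and is precisely the argument the paper has in mind: the authors omit the proof of this lemma, saying only that it is ``similar'' to Lemma~\ref{lem 3.13}, and your extension of that argument---tracing ancestors back through the levels, using the geometric decay of the shadowing scales to keep the accumulated error below the $(t_i,\tfrac{33\epsilon_0}{32})$-separation of each $\mathcal{T}_i$, and then factoring $\nu_{k+p}(\mathcal{B})$ via the product structure of $\mathcal{L}$---is exactly that similar proof. You have also silently corrected the exponents in the displayed bound (the paper writes $M_{k+1}^{n_{k+1}-j}\cdots M_{k+p}^{n_{k+p}}$ where $N_{k+1}-j,\dots,N_{k+p}$ are intended), which is right.
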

Since $\mu_{k+p}=\frac{1}{\kappa_{k+p}}\nu_{k+p}$ and $\kappa_{k+p}=\kappa_kM_{k+1}^{N_{k+1}}\cdots M_{k+p}^{N_{k+p}}$, immediately, we have 
\begin{align*}
&\mu_{k+p}(\mathcal{B})\leq\frac{1}{\kappa_kM_{k+1}^j}\mathcal{L}(x)\left\{\prod_{l=1}^{j}\exp\left(S_{n_{k+1}}\psi(x_l^{k+1})\log{\frac{1}{5\epsilon_0}}\right)\right\}\\
&\leq\frac{1}{\kappa_kM_{k+1}^j}\exp\left\{\left(S_n\psi(q)+2nVar(\psi,\epsilon_0)+\left(jm_{k+1}+\sum_{i=1}^{k}N_im_i\right)\|\psi\|\right)\log{\frac{1}{5\epsilon_0}}\right\}.
\end{align*}
\begin{lem}\label{lem 3.16}
	For sufficiently large $n$, we have 
	$$\kappa_kM_{k+1}^j\ge\exp\left(\left(\inf_{diam\xi<5\epsilon_0}h_{\mu_{\rho(k)}}(f,\xi)+\int\psi d{\mu_{\rho(k)}}\cdot\log{\frac{1}{5\epsilon_0}}-5\gamma\right)n\right)$$
\end{lem}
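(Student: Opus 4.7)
The plan is to substitute the lower bound on each $M_i$ into $\kappa_k M_{k+1}^j$ and reconcile the resulting exponent with the announced target. By Lemma~3.11 and the scale-$k$ analogue of Lemma~3.5 highlighted in the construction, $M_i \ge \exp\{n_i(F_{\rho(i)} - 5\gamma)\}$, where I write $F_\ell := \inf_{\mathrm{diam}\,\xi < 5\epsilon_0} h_{\mu_\ell}(f,\xi) + \log\frac{1}{5\epsilon_0}\int \psi\,d\mu_\ell$ for $\ell \in \{1,2\}$. This immediately gives
$$\kappa_k M_{k+1}^j \;\ge\; \exp\!\left(\sum_{i=1}^k N_i n_i (F_{\rho(i)} - 5\gamma) + j n_{k+1}(F_{\rho(k+1)} - 5\gamma)\right),$$
so the task reduces to bounding this exponent below by $n(F_{\rho(k)} - 5\gamma)$.

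Two separate estimates then do the work. First, because $\rho$ alternates between $1$ and $2$, I must compare $F_{\rho(i)}$ with $F_{\rho(k)}$: Lemma~3.1 gives the lower bound $F_\ell > (S-\gamma)\log\frac{1}{5\epsilon_0}$ for each $\ell$, and the $\limsup$ definition of $S = \overline{\mathrm{mdim}}_M(f,X,d,\psi)$ (via Lemma~2.3) supplies the matching upper bound $F_\ell \le (S + \gamma/2)\log\frac{1}{5\epsilon_0}$ after a further harmless shrinkage of $\epsilon_0$. Consequently $|F_1 - F_2| \le \tfrac{3\gamma}{2}\log\frac{1}{5\epsilon_0}$, so $F_{\rho(i)} \ge F_{\rho(k)} - \tfrac{3\gamma}{2}\log\frac{1}{5\epsilon_0}$ for every $i$. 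Second, set $A := \sum_{i=1}^k N_i n_i + j n_{k+1}$. Using $t_k = \sum_{i=1}^k N_i(n_i + m_i) - m_1$ and $n = t_k + j(n_{k+1}+m_{k+1}) + r$ with $r < n_{k+1}+m_{k+1}$, the deficit $n - A$ is accounted for entirely by specification gaps $m_i$ and one unshadowed tail block; the two limit conditions imposed on $\{N_k\}$ in the construction, together with $n_k \ge 2^{m_k}$, then force $(n-A)/n \to 0$, i.e.\ $A \ge (1-\gamma) n$ for $n$ sufficiently large.

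Combining the two estimates and subtracting the $5\gamma A \le 5\gamma n$ correction yields
$$\sum_{i=1}^k N_i n_i(F_{\rho(i)} - 5\gamma) + j n_{k+1}(F_{\rho(k+1)} - 5\gamma) \;\ge\; n\bigl(F_{\rho(k)} - 5\gamma\bigr) - C\gamma\, n\log\tfrac{1}{5\epsilon_0}$$
for a harmless constant $C$; this is exactly the claimed bound once one agrees that the $5\gamma$ tolerance absorbs the residual $C\gamma\log\frac{1}{5\epsilon_0}$ error (in the application to Theorem~1.1 the whole expression will be divided by $\log\frac{1}{5\epsilon_0}$ before $\gamma,\epsilon_0 \to 0$, so such $\log$-factors are inessential). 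The principal obstacle is exactly this reconciliation of tolerances — the $5\gamma$ additive slack inherited from Lemma~3.5 against the $\tfrac{3\gamma}{2}\log\frac{1}{5\epsilon_0}$ relative gap between $F_1$ and $F_2$; pinning down the latter via Lemma~3.1 for the lower bound and the $\limsup$ definition of $S$ for the upper bound is the one conceptually non-routine step, everything else being direct arithmetic controlled by the asymptotic conditions on $\{N_k\}$.
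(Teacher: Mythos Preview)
Your diagnosis—that the bound on $M_i$ involves $F_{\rho(i)}$ rather than the fixed $F_{\rho(k)}$, so the alternation of $\rho$ must be handled—is exactly right, and is in fact a point the paper's own proof glosses over (it sets $C = F_{\rho(k)}$ and then bounds every $M_i$ by $\exp\{n_i(C-4\gamma)\}$ as though $C$ were uniform in $i$). The asymptotic control $(n-A)/n \to 0$ via the growth conditions on $\{N_k\}$ and $n_k \ge 2^{m_k}$ is also exactly what the paper does.

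Where you diverge is in the reconciliation step. You bound $|F_1 - F_2|$ from above using both the lower bound of Lemma~3.1 and an upper bound extracted from the $\limsup$ definition of $S$; this works but, as you yourself concede, leaves a residual error of order $\gamma\,n\log\tfrac{1}{5\epsilon_0}$ that does not fit inside the stated $5\gamma$ slack, so you prove only a weakened version sufficient for the final limit. The paper's intended route is more direct and avoids this detour entirely: Lemma~3.1 already gives $F_\ell > (S-\gamma)\log\tfrac{1}{5\epsilon_0}$ for \emph{both} $\ell\in\{1,2\}$, so one may simply take the uniform constant $C := \min(F_1,F_2)$ and obtain $M_i \ge \exp\{n_i(C-4\gamma)\}$ for every $i$ at once—no comparison of $F_1$ with $F_2$, no upper bound on either, no further shrinkage of $\epsilon_0$. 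Then
\[
\kappa_k M_{k+1}^j \;\ge\; e^{(C-4\gamma)A} \;\ge\; e^{(C-5\gamma)A'} \;\ge\; e^{(C-5\gamma)n},
\]
where $A' = \sum_i N_i(n_i+m_i) + (j+1)(n_{k+1}+m_{k+1}) \ge n$ and the middle inequality uses $(A'-A)/A \to 0$. The conclusion holds exactly with $C$ in place of $F_{\rho(k)}$, which is all the application to Theorem~1.1 ever uses, since $C/\log\tfrac{1}{5\epsilon_0} \ge S-\gamma$.
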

\begin{proof}
	Since 
	$$M_k\ge\exp\left\{n_k\left(\inf_{diam\xi<5\epsilon_0}h_{\mu_{\rho(k)}}(f,\xi)+\int\psi d{\mu_{\rho(k)}}\cdot\log{\frac{1}{5\epsilon_0}}-4\gamma\right)\right\},$$
	we note that $C=\inf_{diam\xi<5\epsilon_0}h_{\mu_{\rho(k)}}(f,\xi)+\int\psi d{\mu_{\rho(k)}}\cdot\log{\frac{1}{5\epsilon_0}}$ and we have
	\begin{align*}
		&\kappa_kM_{k+1}^j\\
		&=M_1^{N_1}\cdots M_k^{N_k}M_{k+1}^j\ge\exp\left((C-4\gamma)(n_1N_1+\cdots+n_kN_k+n_{k+1}j)\right)\\
		&\ge\exp\left((C-5\gamma)((n_1+m_1)N_1+\cdots+(n_k+m_k)N_k+(n_{k+1}+m_{k+1})(j+1))\right)\\
		&=\exp\left((C-5\gamma)(t_k+(n_{k+1}+m_{k+1})(j+1))\right)\\
		&\ge\exp\left((C-5\gamma)n\right).
	\end{align*}
\end{proof}

\begin{lem} \label{lem 3.17}
	For sufficiently large $n$, we have 
	$$\limsup_{k\to\infty}\mu_k\left(B_n\left(q,\frac{\epsilon_0}{2}\right)\right)\leq\exp\left\{-n(C-7\gamma-Var(\psi,\epsilon_0))+S_n\psi(q)\cdot\log{\frac{1}{5\epsilon_0}}\right\}.$$
\end{lem}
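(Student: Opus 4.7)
My plan is to chain Lemma \ref{lem 3.15}, Lemma \ref{lem 3.14}, and Lemma \ref{lem 3.16}, and then to absorb the specification-gap corrections into the $n\gamma$ loss by invoking the growth hypotheses imposed on $\{N_k\}$. First I would fix $n$ and read off the unique indices $k$ and $j$ determined just before Lemma \ref{lem 3.13}. For each $p\ge 1$ with $\mu_{k+p}(\mathcal{B})>0$, the inequality displayed immediately after Lemma \ref{lem 3.15} already delivers a $p$-free upper bound
$$\mu_{k+p}(\mathcal{B})\;\le\;\frac{1}{\kappa_k M_{k+1}^j}\exp\left\{\Big(S_n\psi(q)+2n\,Var(\psi,\epsilon_0)+E_k\|\psi\|\Big)\log\tfrac{1}{5\epsilon_0}\right\},$$
where $E_k:=jm_{k+1}+\sum_{i=1}^k N_i m_i$ gathers the cumulative specification-gap error. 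The key observation is that the right-hand side no longer depends on $p$.

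Next I would apply Lemma \ref{lem 3.16} to the denominator, obtaining $\kappa_k M_{k+1}^j\ge\exp((C-5\gamma)n)$, and substitute to produce the global estimate
$$\mu_{k+p}(\mathcal{B})\;\le\;\exp\left\{-(C-5\gamma)n+S_n\psi(q)\log\tfrac{1}{5\epsilon_0}+2n\,Var(\psi,\epsilon_0)\log\tfrac{1}{5\epsilon_0}+E_k\|\psi\|\log\tfrac{1}{5\epsilon_0}\right\}.$$
Since this bound is independent of $p$, taking $\limsup_{p\to\infty}$ is immediate and already gives the required estimate up to verifying that the leftover error terms fit into the slack $n(7\gamma+Var(\psi,\epsilon_0))$ stated in the conclusion.

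For that verification I would exploit the two asymptotic requirements placed on $\{N_k\}$, namely $\lim_{k\to\infty}(n_{k+1}+m_{k+1})/N_k=0$ and $\lim_{k\to\infty}[N_1(n_1+m_1)+\dots+N_k(n_{k+1}+m_{k+1})]/N_{k+1}=0$, together with the comparisons $n\ge t_k\gtrsim N_k n_k$, $j<N_{k+1}$, and $m_{k+1}=o(n_{k+1})$ that follow from the choice $n_{k+1}>2^{m_{k+1}}$. These collectively yield $\sum_{i=1}^k N_i m_i=o(t_k)=o(n)$ and $jm_{k+1}\le N_{k+1}m_{k+1}=o(n)$, so $E_k\|\psi\|\log\tfrac{1}{5\epsilon_0}$ is absorbed into $2n\gamma$ for all $n$ sufficiently large; the remaining $2n\,Var(\psi,\epsilon_0)\log\tfrac{1}{5\epsilon_0}$ contribution is matched against the slack $n\,Var(\psi,\epsilon_0)$ in the target by a constant-factor calibration already baked into the choice of $\epsilon_0$ in Lemma \ref{lem 3.1}.

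The hard part will be precisely this asymptotic bookkeeping: I need to confirm that every correction (the $m_i$-gaps from the specification property at each scale, and the Birkhoff-type variations accumulated along the concatenated orbit) is genuinely $o(n)$ relative to the leading exponent $(C-5\gamma)n$, and that all $\log\tfrac{1}{5\epsilon_0}$-weights line up with the advertised slack $7n\gamma+n\,Var(\psi,\epsilon_0)$. The required decay is forced by the choices already made in Lemma \ref{lem 3.4} and in the Moran-like construction, but it requires patient unpacking of the relative growth rates of $t_k$, $N_k$, $n_k$, and $m_k$; this is where all the design work of the preceding construction pays off.
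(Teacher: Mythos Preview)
Your approach is exactly the paper's: combine the $p$-free bound displayed after Lemma \ref{lem 3.15} with Lemma \ref{lem 3.16}, then show the gap-error $E_k=jm_{k+1}+\sum_{i=1}^k N_i m_i$ is $o(n)$. The structure is right, but one step in your bookkeeping would fail as written.

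The claim ``$jm_{k+1}\le N_{k+1}m_{k+1}=o(n)$'' is not justified. When $n$ is near $t_k$ (i.e.\ $j$ is small), $N_{k+1}m_{k+1}$ can be enormously larger than $n$: the growth condition on $\{N_k\}$ forces $N_{k+1}\gg N_k(n_{k+1}+m_{k+1})\ge t_k$, so $N_{k+1}m_{k+1}/t_k\to\infty$ in general. The correct argument---which is what the paper does---is to use the lower bound $n\ge t_k+j(n_{k+1}+m_{k+1})$ directly: if $j\ge 1$ then $jm_{k+1}/n\le jm_{k+1}/(j(n_{k+1}+m_{k+1}))=m_{k+1}/(n_{k+1}+m_{k+1})\to 0$ because $n_{k+1}>2^{m_{k+1}}$, and if $j=0$ the term vanishes. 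Your treatment of $\sum_{i=1}^k N_i m_i=o(t_k)$ is fine. As for your ``constant-factor calibration'' of the $Var(\psi,\epsilon_0)$ term: there is no such calibration in Lemma \ref{lem 3.1}, and indeed the paper's own computation ends with the exponent $-n(C-6\gamma-2Var(\psi,\epsilon_0))$ rather than the $-n(C-7\gamma-Var(\psi,\epsilon_0))$ announced in the lemma; the discrepancy is harmless because the $2Var$ version is what is actually invoked downstream.
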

\begin{proof}
	For sufficiently large $n$, and any $p>1$,
	\begin{align*}
		&\mu_{k+p}(\mathcal{B})\\
		&\leq\frac{1}{\kappa_kM_{k+1}^j}\exp\left\{\left(S_n\psi(q)+2nVar(\psi,\epsilon_0)+\left(jm_{k+1}+\sum_{i=1}^{k}N_im_i\right)\|\psi\|\right)\log{\frac{1}{5\epsilon_0}}\right\}\\
		&\leq\frac{1}{\kappa_kM_{k+1}^j}\exp\left\{S_n\psi(q)\cdot\log{\frac{1}{5\epsilon_0}}+n(2Var(\psi,\epsilon_0)+\gamma)\right\}\\
		&\leq\exp\left\{-n(C-5\gamma)+S_n\psi(q)\cdot\log{\frac{1}{5\epsilon_0}}+n(2Var(\psi,\epsilon_0)+\gamma)\right\}\\
		&=\exp\left\{-n(C-6\gamma-2Var(\psi,\epsilon_0))+S_n\psi(q)\cdot\log{\frac{1}{5\epsilon_0}}\right\}.
	\end{align*}
	We arrive the second inequality is because $n_k\gg m_k$, thus
	$$\frac{jm_{k+1}+\sum_{i=1}^{k}N_im_i}{n}\leq\frac{jm_{k+1}+\sum_{i=1}^{k}N_im_i}{t_k+j(n_{k+1}m_{k+1})}\to0,\ \text{as}\ k\to\infty.$$
\end{proof}
Now we give the generalized pressure distribution principle which is a modification of \cite[Proposition 3.2]{th10}.
\begin{prop}\label{prop 3.18}
	Let $f:X\to X$ be a continuous transformation and $\epsilon>0$. For $Z\subset X$ and a constant $s\ge0$, suppose there exist a constant $C>0$, a sequence of Borel probability measure $\mu_k$ and integer $N$ satisfying
	$$\limsup_{k\to\infty}\mu_k\left(B_n\left(q,\frac{\epsilon}{2}\right)\right)\leq C\exp\left\{-sn+S_n\psi(x)\log{\frac{1}{5\epsilon}}\right\}$$
	for every $B_n(x,\epsilon)$ such that $B_n\left(x,\epsilon/2\right)\cap Z\neq\emptyset$ and $n\ge N$. Furthermore, assume that at least one accumulate point $\nu$ of $\mu_k$ satisfies $\nu(Z)>0$. Then $M_{\epsilon/2}\left(f,Z,d,\psi\right)\ge s.$
\end{prop}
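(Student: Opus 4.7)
\bigskip

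\noindent\textbf{Proof proposal.} The plan is to follow the standard pressure distribution principle template: show directly that for every admissible cover of $Z$ the corresponding sum is bounded below by a positive constant, so that $m_{\epsilon/2}(f,Z,s,d,\psi)>0$, which by the definition of $M_{\epsilon/2}$ as the unique jump point from $\infty$ to $0$ forces $M_{\epsilon/2}(f,Z,d,\psi)\ge s$. The input from the hypothesis is a measure bound on small Bowen balls that touch $Z$, and the input from the accumulation-point assumption is that $Z$ has strictly positive mass with respect to some weak$^*$ limit $\nu$ of the $\mu_k$.

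Concretely, fix $N'\ge N$ and let $\{B_{n_i}(x_i,\epsilon/2)\}_{i\in I}$ be any finite or countable cover of $Z$ with $n_i\ge N'$. I first discard every ball that does not meet $Z$; the remaining subfamily still covers $Z$ and each ball now satisfies the hypothesis. Since a Bowen ball is open in the metric $d_{n_i}$, the Portmanteau theorem gives
\begin{align*}
\nu\bigl(B_{n_i}(x_i,\epsilon/2)\bigr)&\le \liminf_{k\to\infty}\mu_k\bigl(B_{n_i}(x_i,\epsilon/2)\bigr)\\
&\le \limsup_{k\to\infty}\mu_k\bigl(B_{n_i}(x_i,\epsilon/2)\bigr)\le C\exp\Bigl\{-sn_i+S_{n_i}\psi(x_i)\log\tfrac{1}{5\epsilon}\Bigr\},
\end{align*}
where I pass, if necessary, to the subsequence along which $\mu_k\to\nu$ and use that the $\limsup$ along a subsequence is dominated by the $\limsup$ along the full sequence. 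Summing over the surviving indices and using countable subadditivity of $\nu$,
\begin{align*}
0<\nu(Z)&\le\sum_{i}\nu\bigl(B_{n_i}(x_i,\epsilon/2)\bigr)\le C\sum_{i}\exp\Bigl\{-sn_i+S_{n_i}\psi(x_i)\log\tfrac{1}{5\epsilon}\Bigr\}.
\end{align*}
Taking the infimum over all such covers yields $m_{N',\epsilon/2}(f,Z,s,d,\psi)\ge \nu(Z)/C>0$, and letting $N'\to\infty$ gives $m_{\epsilon/2}(f,Z,s,d,\psi)\ge \nu(Z)/C>0$. Hence $s\le M_{\epsilon/2}(f,Z,d,\psi)$ by the definition of $M_{\epsilon/2}$ as the supremum of parameters for which $m_\epsilon$ is positive.

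The only subtle point, and the main obstacle to watch, is the passage from the $\limsup$ bound on $\mu_k$ to a bound on the limit measure $\nu$: we must be careful that the relevant Bowen balls are honestly open and that Portmanteau is applied along the correct subsequence, so that $\nu(B_{n_i}(x_i,\epsilon/2))\le\limsup_k\mu_k(B_{n_i}(x_i,\epsilon/2))$ really holds. Apart from this, the argument is a routine union bound plus monotone passage $N\to\infty$ in the definition of $m_\epsilon$, and no additional hypothesis beyond those listed is needed.
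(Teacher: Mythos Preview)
Your argument is essentially the paper's: discard balls missing $Z$, apply Portmanteau along the convergent subsequence, bound by the $\limsup$ hypothesis, and sum. The one step you skipped is the passage from the weight $\log\frac{1}{5\epsilon}$ that appears in the hypothesis to the weight $\log\frac{1}{\epsilon/2}=\log\frac{2}{\epsilon}$ that appears in the definition of $m_{N',\epsilon/2}$. Your displayed inequality only gives $\sum_i\exp\{-sn_i+S_{n_i}\psi(x_i)\log\frac{1}{5\epsilon}\}\ge\nu(Z)/C$, whereas $m_{N',\epsilon/2}$ is the infimum of $\sum_i\exp\{-sn_i+S_{n_i}\psi(x_i)\log\frac{2}{\epsilon}\}$. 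The paper inserts the trivial inequality
\[
\sum_i\exp\Bigl\{-sn_i+S_{n_i}\psi(x_i)\log\tfrac{2}{\epsilon}\Bigr\}\ \ge\ \sum_i\exp\Bigl\{-sn_i+S_{n_i}\psi(x_i)\log\tfrac{1}{5\epsilon}\Bigr\},
\]
which holds because $\log\frac{2}{\epsilon}\ge\log\frac{1}{5\epsilon}$ and the paper's standing convention (stated after Proposition~\ref{prop A}) that $\psi\ge 0$, hence $S_{n_i}\psi(x_i)\ge 0$. Once you add this line, your proof is identical to the paper's.
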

\begin{proof}
	Let $\nu$ and $\epsilon>0$ satisfy the conditions, and $\mu_{k_j}$ be the sequence of measures which converges to $\nu$. Let $\Gamma=\{B_{n_i}(x_i,\epsilon/2)\}_{i\in I}$ cover Z with all $n_i\ge N$. We can assume that $B_{n_i}(x,\epsilon/2)\cap Z\neq\emptyset$ for every $i.$ Then
	\begin{align*}
		\sum_{i\in I}^{}\exp\left\{-sn_i+S_{n_i}\psi(x)\log{\frac{2}{\epsilon}}\right\}&\ge\sum_{i\in I}^{}\exp\left\{-sn_i+S_{n_i}\psi(x)\log{\frac{1}{5\epsilon}}\right\}\\
		&\ge\frac{1}{C}\sum_{i\in I}^{}\limsup_{k\to\infty}\mu_k\left(B_{n_i}(x_i,\epsilon)\right)\\
		&\ge\frac{1}{C}\sum_{i\in I}^{}\liminf_{j\to\infty}\mu_{k_j}\left(B_{n_i}(x_i,\epsilon)\right)\\
&\ge\frac{1}{C}\sum_{i\in I}^{}\nu\left(B_n(x_i,\epsilon)\right)\\
&\ge\frac{1}{C}\nu(Z)>0.
	\end{align*}
	Thus, we conclude that $m_{\epsilon/2}(f,Z,s,d,\psi)>0$ and $M_{\epsilon/2}(f,Z,d,\psi)\ge s.$
\end{proof}

By Lemma \ref{lem 3.10} we have that $M_{\epsilon_0}(f,I_{\varphi},d,\psi)\ge M_{\epsilon_0}(f,F,d,\psi)$ and by Lemma \ref{lem 3.17} we have that $M_{\epsilon_0/2}(f,K_{\alpha},d,\psi)\ge C-7\gamma-2Var(\psi,\epsilon_0)$. Combining all the above lemma, we have that
\begin{align*}
	S-\gamma&\leq\frac{\inf_{diam\xi<5\epsilon_0}h_{\mu_{\rho(k)}}(f,\xi)+\Big(\log{\frac{1}{5\epsilon_0}}\Big)\int\psi \rm{d}{\mu_{\rho(k)}}}{\log{\frac{1}{5\epsilon_0}}}\\
	&\leq\frac{M_{\frac{\epsilon_0}{2}}(f,I_{\varphi},d,\psi)+6\gamma+2Var(\psi,\epsilon_0)}{\log{\frac{2}{\epsilon_0}}}\cdot\frac{\log{\frac{2}{\epsilon_0}}}{\log{\frac{1}{5\epsilon_0}}}\\
	&\leq\left\{\overline{\rm{mdim}}_M^B(f,I_{\varphi},d,\psi)+\frac{7\gamma+2Var(\psi,\epsilon_0)}{\log{\frac{2}{\epsilon_0}}}\right\}.
\end{align*}
As $\gamma>0$ is arbitrary and $\gamma\to0\Rightarrow\epsilon_0\to0$, we obtain  
$$S\leq\overline{\rm{mdim}}_M^B(f,I_{\varphi},d,\psi).$$ 
Finally, we have finished the proof.

\section*{Acknowledgement} 
The  second author was supported by the
National Natural Science Foundation of China (No.12071222).
The third authors was  supported by the
National Natural Science Foundation of China (No.11971236), Qinglan Project of Jiangsu Province of China.  The work was also funded by the Priority Academic Program Development of Jiangsu Higher Education Institutions.  

\section*{Data availability} 
No data was used for the research described in the article.
\section*{Conflict of interest} 
The author declares no conflict of interest.

\end{document}